\theoremstyle{plain}
\newtheorem{main}{Theorem}
\newtheorem{theorem}{Theorem}[section]
\newtheorem{lemma}[theorem]{Lemma}
\newtheorem{proposition}[theorem]{Proposition}
\newtheorem{corollary}[theorem]{Corollary}
\theoremstyle{remark}
\newtheorem{remark}[theorem]{Remark}
\newcommand{\Leb}{\operatorname{vol}}
\newcommand{\C}{\operatorname{C}}
\newcommand{\SPH}{\operatorname{SPH}}
\newcommand{\Gibbs}{\operatorname{Gibbs}}
\newcounter{property}
\newenvironment{property*}[1][]{\par\medskip
\noindent\textbf{P\theproperty#1'.} \rmfamily}{\medskip}
           \def\ea{\end{array}}
          \def\ec{\end{center}}
     \def\ed{\end{description}}
        \def\ee{\end{equation}}
       \def\eea{\end{eqnarray}}
     \def\eeaa{\end{eqnarray*}}
 \def\et{\end{thebibliography}}
\def\Orb{{\rm Orb}}
\def\Diff{{\rm Diff}}
\def\Gibb{{\rm Gibb}}
\def\supp{\operatorname{supp}}
\def\cA{{\mathcal A}}
\def\cD{{\mathcal D}}
\def\cU{{\mathcal U}}
\def\cV{{\mathcal V}}
\def\cR{{\mathcal R}}
\def\cF{{\mathcal F}}
\def\cM{{\mathcal M}}
\def\cP{{\mathcal P}}
\def\cQ{{\mathcal Q}}
\def\cR{{\mathcal R}}
\def\cW{{\mathcal W}}
\def\loc{\operatorname{loc}}
\def\epsilon{\varepsilon}
\def\TT{{\mathbb T}}
\def\RR{{\mathbb R}}
\def\ZZ{{\mathbb Z}}
\def\MM{\operatorname{MM}}
\def\Int{\operatorname{int}}
\title[Measures of maximal $u$-entropy]
      {Thermodynamical $u$-formalism I:\\ measures of maximal $u$-entropy for maps that factor over Anosov}
\author{Raul Ures, Marcelo Viana, Fan Yang and Jiagang Yang}
\date{\today}
\thanks{M.V. and J.Y. were partially supported by CNPq, FAPERJ, and PRONEX. R. U. was partially supported by NSFC 11871262.
We acknowledge support from the Fondation Louis D--Institut de France (project coordinated by M. Viana).}
\address{Department of Mathematics, Southern University of Science and Technology, 1088 Xueyuan Rd., Xili, Nanshan District, Shenzhen, Guangdong, China 518055}
\email{ures\@@sustc.edu.cn}
\address{IMPA, Est. D. Castorina 110, 22460-320 Rio de Janeiro, Brazil}
\email{viana\@@impa.br}
\address{Department of Mathematics, Michigan State University, MI, USA.}
\email{yangfa31\@@msu.edu}
\address{Departamento de Geometria, Instituto de Matem\'atica e Estat\'\i stica, Universidade Federal Fluminense, Niter\'oi, Brazil}
\email{yangjg\@@impa.br}
\begin{document}

\maketitle

\begin{abstract}
We construct measures of maximal $u$-entropy for any partially hyperbolic diffeomorphism that factors
over an Anosov torus automorphism and has mostly contracting center direction.
The space of such measures has finite dimension, and its extreme points are ergodic measures with
pairwise disjoint supports.
\end{abstract}


\section{Introduction}
The \emph{variational principle}, proved in the early 1970s by
Dinaburg~\cite{Din70,Din71}, Goodman~\cite{Gdm71} and
Goodwin~\cite{Gdw71}, asserts that the topological entropy $h(f)$ of a
continuous transformation $f:M\to M$ in a compact metric space $M$
coincides with the supremum of the entropies $h_\mu(f)$ of its invariant
probability measures.
In general, the supremum is not attained but \emph{measures of maximal
entropy} do exist in some important situations.
That includes, for instance, expansive maps, as in that case the
entropy $h_\mu(f)$ is an upper semi-continuous function of the
probability measure $\mu$.

A more general version of the variational principle, due to Walters~\cite{Wal75},
states that, for any continuous function  $\phi:M\to \RR$, the (topological)
pressure $P(f,\phi)$ coincides with supremum of
$$
h_\mu(f) + \int_M \phi \, d\mu
$$
over all probability measures $\mu$. The situation in the previous
paragraph corresponds to the case $\phi\equiv 0$.
In general, when they exist, measures that realize the supremum are
called \emph{equilibrium states} of $\phi$.
The semi-continuity argument to prove existence for expansive
transformations remains valid in this generality.
See \cite{FET16} for a detailed review of these classical facts.

Still in the 1970s, Sinai~\cite{Sin72}, Ruelle~\cite{Rue76b}, and
Bowen~\cite{BR75,Bow75a} brought in ideas from statistical mechanics
to give an explicit construction of equilibrium states for a broad
class of transformations, including expanding maps and uniformly
hyperbolic diffeomorphisms. In addition to proving existence and
uniqueness, their theory leads to detailed information on the properties
of such measures, including mixing properties, decay of correlations
and large deviations principles.

When the transformation admits an invariant structure, such as an
invariant lamination or even a foliation, one may consider the
corresponding \emph{partial entropy}, which measures the complexity
of the dynamics along such a structure. This idea can be traced back
to Pesin~\cite{Pes77} and, more explicitly, Ledrappier~\cite{Led84a}
and Ledrappier, Young~\cite{LeY85a,LeY85b}.
In these papers, the  relevant invariant structure is the Pesin
unstable lamination.

Another important case, which concerns us more directly, is that of
the strong-unstable foliation $\cF^{uu}$ of a partially hyperbolic
diffeomorphism $f:M\to M$. We denote by $h_\mu(f,\cF^{uu})$ the
corresponding partial entropy of each invariant measure $\mu$,
and we call it \emph{$u$-entropy} of $\mu$.
In this setting, the notion of \emph{partial topological entropy} was
defined by Saghin, Xia~\cite{SaX09}. We call it the
\emph{topological $u$-entropy}
of the diffeomorphism, and denote it as $h(f,\cF^{uu})$.

Hu, Hua, Wu~\cite{HHW17} proved the variational $u$-principle
-- $h(f,\cF^{uu})$ coincides with the supremum of $h_\mu(f,\cF^{uu})$
over all $\mu$ -- and they also proved that the supremum is attained.
Recently, Hu, Wu, Zhu~\cite{HWZ} introduced the notion of
\emph{$u$-pressure} of a continuous function $\phi:M\to\RR$ and
extended the previous results to that setting: the $u$-pressure is equal
to the supremum of
\begin{equation}\label{eq.HHW}
h_\mu(f,\cF^{uu}) + \int_M \phi \, d\mu
\end{equation}
over all invariant probability measures $\mu$.

In this paper we initiate a program to extend the classical thermodynamical formalism
of Sinai, Ruelle and Bowen, to the language of $u$-entropy and $u$-pressure.
An invariant probability measure that realizes the supremum in \eqref{eq.HHW} is
called an \emph{equilibrium $u$-state}.
A long term goal is to prove that generic partially hyperbolic diffeomorphisms admit
finitely many ergodic equilibrium $u$-states, for any given H\"older potential $\phi$.
Moreover, these equilibrium states should exhibit fast decay of correlations and other
mixing-type properties.

It should be noted that in the setting of uniformly hyperbolic systems, the notions of
equilibrium state and equilibrium $u$-state coincide.
Our results (see  Section~\ref{s.example5}) show that this is no longer the case, in
general, for partially hyperbolic systems, the reason being that the (total) entropy
is vulnerable to local phenomena, such as small horseshoes, while the $u$-entropy is not.
As a consequence, in the broader partially hyperbolic setting equilibrium states may
fail to reflect the global dynamics, a role which is kept by equilibrium $u$-states.

Another point worth making is that the theory we deal with here reveals a close relation
between equilibrium $u$-states and the notion of Gibbs $u$-state
(see Theorem~\ref{main.maximal.measures} below).
The latter was introduced by Pesin, Sinai~\cite{PeS82} in the 1980s, and has been used
extensively (see~\cite[Chapter~11]{Beyond}) but, to the best of our knowledge, had not
previously been linked to the variational principle.

Presently, we focus on a class of partially hyperbolic diffeomorphisms that factor over an
Anosov torus automorphism. Let us begin with an informal presentation of the main
notions, postponing most formal definitions to Section~\ref{s.definitions}.

We take $f:M\to M$ to be a partially hyperbolic, dynamically coherent $C^1$ diffeomorphism
on a compact manifold $M$, with partially hyperbolic splitting $TM = E^{cs} \oplus E^{uu}$.
We say that $f$ \emph{factors over Anosov} if there exist a hyperbolic linear automorphism
$A:\TT^d\to\TT^d$ on some torus, and a continuous surjective map $\pi:M\to\TT^d$ such that
\begin{enumerate}
    \item[(H1)] $\pi \circ f = A \circ \pi$;
    \item[(H2)] $\pi$ maps each strong-unstable leaf of $f$ homeomorphically to an unstable leaf of $A$;
    \item[(H3)] $\pi$ maps each center-stable leaf of $f$ to a stable leaf of $A$.
\end{enumerate}
There are many examples of such maps, some of which we review in Sections~\ref{s.example1} through~\ref{s.example5},
including diffeomorphisms derived from Anosov, partially hyperbolic skew-products, and certain topological solenoids.
Condition (H2) implies that $E^{uu}$ has the same dimension as the unstable subbundle of $A$;
the dimension of $E^{cs}$ may be greater than that of the stable subbundle of $A$.

By pulling the Lebesgue measure along the unstable leaves of $A$ back under the factor map $\pi$,
one obtains a special family of measures on the strong-unstable plaques of $f$ that we call the
\emph{reference measures} (or \emph{Margulis measures}, see~\cite{Mar70}).
We say that $f$ has \emph{$c$-mostly contracting center} if
\begin{equation}\label{eq.c_mostly_contracting}
\limsup_n \frac 1n \log \|Df^n \mid_{E^{cs}}\| < 0
\end{equation}
on a positive measure subset relative to every reference measure. This is similar to the notion of
\emph{mostly contracting center direction} introduced by Bonatti, Viana~\cite{BoV00}, except that
we replace the Lebesgue measure along the strong-unstable leaves of $f$ with our reference measures
in the definition. The motivation behind our terminology will be explained in Remark~\ref{r.terminology}.

The \emph{topological $u$-entropy} of $f$, denoted by $h(f,\cF^{uu})$, is the maximal rate of volume growth
for any disk contained in an strong-unstable leaf. See Saghin, Xia~\cite{SaX09}.
The \emph{$u$-entropy} of an $f$-invariant measure $\mu$, denoted as $h_\mu(f,\cF^{uu})$,
is defined by
$$
h_\mu(f,\mu) = H_\mu\left(f^{-1} \xi^u \mid \xi^u\right)
$$
where $\xi^u$ is any measurable partition subordinate to the strong-unstable foliation
(see the Appendix for the definition).
Recall that, according to Rokhlin~\cite[Section~7]{Rok67a}, the entropy $h_\mu(f)$ is the
supremum of $H_\mu\left(f^{-1} \xi \mid \xi\right)$ over all measurable partitions $\xi$
with $f^{-1}\xi \prec \xi$. Thus we always have
\begin{equation}\label{eq.two_entropies}
h_\mu(f,\cF^{uu}) \le h_\mu(f).
\end{equation}
See Ledrappier, Strelcyn~\cite{LeS82}, Ledrappier~\cite{Led84a},  Ledrappier, Young~\cite{LeY85a},
and Yang~\cite{Yan16}. We call $\mu$ a \emph{measure of maximal $u$-entropy} if it satisfies
$$
h_\mu(f,\cF^{uu})=h(f,\cF^{uu}).
$$
By Hu, Wu, Zhu~\cite{HWZ}, the set $\MM^u(f)$ of measures of maximal $u$-entropy is always
non-empty, convex and compact. Moreover, its extreme points are ergodic measures.

\begin{main}\label{main}
If $f:M \to M$ is a $C^1$ diffeomorphism which factors over Anosov and has $c$-mostly contracting center,
then it admits finitely many ergodic measures of maximal $u$-entropy, and their supports are pairwise disjoint.
Each support has finitely many connected components, they are unions of
entire leaves of $\cF^{uu}$, and every leaf is dense in the corresponding
connected component.
\end{main}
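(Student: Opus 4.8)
The plan is to construct equilibrium $u$-states via a disintegration argument over the Anosov base, using the reference (Margulis) measures as the building blocks along strong-unstable plaques, and then exploit the $c$-mostly contracting hypothesis to control the center-stable behaviour. Concretely, I would first push forward the natural reference-measure family to obtain, for each point, a local unstable measure, and use the factor map $\pi$ together with property (H2) to identify the $u$-entropy of any invariant measure $\mu$ projecting to the Anosov maximal-entropy (i.e.\ Haar/Lebesgue) measure with the topological $u$-entropy $h(f,\cF^{uu})$, which by the variational $u$-principle of Hu--Wu--Zhu equals $h(A,\cF^u_A)=h_{\operatorname{top}}(A)$. The key point is that $\pi$ maps strong-unstable leaves homeomorphically to unstable leaves of $A$ (H2), so the conditional entropy $H_\mu(f^{-1}\xi^u\mid\xi^u)$ can be compared leaf-by-leaf with the corresponding quantity downstairs; a measure is of maximal $u$-entropy if and only if its unstable conditionals coincide $\mu$-a.e.\ with the reference measures, i.e.\ it is a Gibbs $u$-state whose projection is Haar.

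**Next I would** analyze the set of Gibbs $u$-states projecting to Haar on $\TT^d$. Gibbs $u$-states always exist (push forward Lebesgue on an unstable disk and take a Cesàro/weak-$*$ limit), and the $c$-mostly contracting condition forces their center-stable Lyapunov exponents to be negative on a positive reference-measure set, hence negative for every ergodic Gibbs $u$-state — this is the analogue of the Bonatti--Viana dichotomy. Standard arguments (as in \cite[Chapter~11]{Beyond}) then give: (i) there are only finitely many ergodic Gibbs $u$-states, (ii) each has support that is $\cF^{uu}$-saturated (a union of entire strong-unstable leaves), and (iii) the negativity of center exponents yields local uniqueness — distinct ergodic Gibbs $u$-states have disjoint supports, because two such measures whose supports meet would, via the contraction along $E^{cs}$ and the absolute continuity / holonomy of $\cF^{uu}$, have to coincide. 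Combining with the entropy characterization from the first step, the ergodic measures of maximal $u$-entropy are exactly the ergodic Gibbs $u$-states projecting to Haar, and they are finite in number with pairwise disjoint supports.

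**For the final structural claims** about connected components, I would use that each support $S$ is compact, $\cF^{uu}$-saturated, and $f$-invariant, so $f$ permutes its finitely many connected components; passing to a power of $f$ fixing a component, ergodicity of the corresponding (iterated) measure shows every strong-unstable leaf is dense in that component. That the number of components is finite follows because the components are open-and-closed in $S$ and $S$ is compact; their being unions of entire leaves is immediate from saturation. The minimality-type density statement is where one must be slightly careful: it requires that the unstable conditional measures have full support on the leaf, which holds because the reference measures (pullbacks of Lebesgue on $A$-unstables) have full support, and then a standard argument shows the closure of any leaf inside $S$ is open in $S$, hence a whole component.

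**I expect the main obstacle to be** the identification in the first step: showing that a measure has maximal $u$-entropy \emph{precisely} when its unstable conditionals are the reference measures and its projection is the Anosov measure of maximal entropy. The inequality $h_\mu(f,\cF^{uu})\le h_{\operatorname{top}}(A)$ and the rigidity in the equality case require a Ledrappier--Young style analysis of the entropy along $\cF^{uu}$ relative to the factor — one must show that the $u$-entropy does not ``see'' the extra center-stable directions (which are killed on the base by (H3)) and is governed entirely by the unstable dynamics of $A$ through (H2). The $c$-mostly contracting hypothesis is what prevents pathologies (such as the unstable conditionals being singular with respect to the reference family), so the delicate part is intertwining the entropy rigidity with the Gibbs $u$-state machinery; everything after that is a relatively routine adaptation of the classical Bonatti--Viana finiteness and disjointness arguments.
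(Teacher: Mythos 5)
Your first two steps track the paper's actual argument closely: the identification of measures of maximal $u$-entropy with $c$-Gibbs $u$-states via the inequality $h_\mu(f,\cF^{uu})\le h_{\pi_*\mu}(A,\cW^u)$ and the rigidity in the equality case (the paper's Proposition~\ref{p.upperbelow} and Corollary~\ref{c.upperbelow}), and then the Bonatti--Viana-style finiteness and disjointness arguments using negative center-stable exponents, Pesin stable manifolds of uniform size on positive reference-measure sets, and the invariance of the reference measures under center-stable holonomy (Proposition~\ref{p.contracting}, Lemmas~\ref{l.finitegibbs} and~\ref{l.disjoint}). That part of the sketch is sound and is essentially the same route.

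The gap is in your final structural step, in two places. First, you claim the number of connected components of each support is finite ``because the components are open-and-closed in $S$ and $S$ is compact.'' This is false as stated: connected components of a compact set need not be open (a Cantor set is compact with uncountably many components), so compactness alone gives nothing. The paper instead finds a hyperbolic periodic point $p\in\supp\mu$ with $\dim E^{cs}$ contracting eigenvalues --- this requires a genuinely nontrivial ingredient, a $C^1$ version of Katok's shadowing lemma for measures whose Oseledets splitting extends to a dominated splitting (Lemma~\ref{l.katok_shadowing_lemma}, proved in the appendix) --- and then decomposes $\mu$ ergodically under $F=f^{\per(p)}$ into finitely many pieces $f^j_*\mu_0$; each $\supp(f^j_*\mu_0)$ is connected because the single leaf $\cF^{uu}(p)$ is dense in it, and these are exactly the components. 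Second, your density claim ``ergodicity of the iterated measure shows every strong-unstable leaf is dense in that component'' only yields density of the orbit of $\mu_0$-almost every leaf, not of \emph{every} leaf; upgrading to $u$-minimality is precisely the hard point. The paper does this by showing every leaf in the support transversely meets $W^s(p)$ inside a fixed compact piece $V^s_N(p)$ (a compactness argument over the support), and then applying the inclination lemma to backward iterates to conclude that the closure of any leaf contains $W^u(p)$, whose closure is the whole component. Your alternative assertion that ``the closure of any leaf inside $S$ is open in $S$'' is exactly what needs proof and is not supplied. Without the periodic point and the inclination-lemma step, both the finiteness of components and the $u$-minimality remain unproved.
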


The assumption that $f$ has $c$-mostly contracting center is $C^1$-open among diffeomorphisms that factor over
Anosov (see Proposition~\ref{p.robust}). Corresponding results were proved by~\cite{BoV00,BDP03,ANd10} for the
classical notion of mostly contracting center. It is not clear whether factoring over Anosov is also an open property.

Our construction provides an explicit description of the measures of maximal $u$-entropy in terms of their (leafwise)
densities relative to the reference measures. A different approach, using tools from geometric measure theory,
was recently developed by Climenhaga, Pesin, Zelerowicz~\cite{CPZ20} to prove existence and uniqueness for a
different class of partially hyperbolic dynamical systems.

We also point out that, while we state the theorem for (globally defined) diffeomorphisms,
the same arguments yield a version of the statement for $C^1$ embeddings.
Such a semi-global situation is indeed considered in Sections~\ref{s.example4} and~\ref{s.example5}.
The latter provides examples of measures of maximal $u$-entropy which are not measures of maximal entropy
(examples of measures of maximal entropy which do not maximize $u$-entropy are easy to exhibit~\cite{TaY19}).

In a forthcoming paper~\cite{UVYY2} we prove that the ergodic measures of maximal $u$-entropy constructed
in Theorem~\ref{main} satisfy a large deviations principle, and have exponential decay of correlations for
H\"older observables.

The present paper is organized as follows. In Section~\ref{s.definitions} we give precise definitions and
state two main results: Theorem~\ref{main.maximal.measures}, which characterizes the measures of maximal
$u$-entropy as the system's $c$-Gibbs $u$-states, and Theorem~\ref{main.skeleton}, which classifies the
ergodic $c$-Gibbs $u$-states. Their combination contains Theorem~\ref{main}, in a detailed form.
In Section~\ref{s.maximal.measures} we prove Theorem~\ref{main.maximal.measures}.
In Section~\ref{s.Gibbsu} we study the space of $c$-Gibbs $u$-states, and use the conclusions to
prove Theorem~\ref{main.skeleton}. Section~\ref{s.basic_properties} contains several properties of
diffeomorphisms with $c$-mostly contracting center, some of which are used in the sequences that follow.
In Sections~\ref{s.example1} to~\ref{s.example5} we exhibit several examples of diffeomorphisms
satisfying the assumptions of our results.
In the appendix, a few classical results about $C^2$ diffeomorphisms are extended to the $C^1$ case,
under the extra assumption that there exists a dominated splitting.

\section{Definitions and statements}\label{s.definitions}

In this section we give the precise definitions of the notions we are
going to use, and we state more detailed results which contain Theorem~\ref{main}.

\subsection{Partial hyperbolicity}

A $C^1$ diffeomorphism $f:M \to M$ on a compact manifold $M$ is \emph{partially hyperbolic}
if there exists a $Df$-invariant splitting
$$
T M = E^{cs} \oplus E^{uu}
$$
of the tangent bundle such that $Df \mid_{E^{uu}}$ is
uniformly expanding and \emph{dominates} $Df \mid_{E^{cs}}$.
By this we mean that there exist a Riemmanian metric on $M$
and a continuous function $\omega(x) < 1$ such that
\begin{equation}\label{eq.omega}
\|Df(x)v^{uu}\| \ge \frac{1}{\omega(x)}
\text{ and }
\frac{\|Df(x)v^{cs}\|}{\|Df(x)v^{uu}\|}
\le \omega(x)
\end{equation}
for any unit vectors $v^{cs}\in E^{cs}_x$ and $v^{uu}\in E^{uu}_x$,
and any $x \in M$.

It is a classical fact (see \cite{HPS77}) that the \emph{unstable} subbundle $E^{uu}$ is uniquely integrable,
meaning that there exists a unique foliation $\cF^{uu}$ which is invariant under $f$ and tangent to $E^{uu}$
at every point. We assume that $f$ is \emph{dynamically coherent}, meaning that the subbundle $E^{cs}$
is also uniquely integrable, and we denote by $\cF^{cs}$ the integral foliation.
A compact $f$-invariant set $\Lambda\subset M$ is \emph{$u$-saturated} if it consists of entire leaves of $\cF^{uu}$.
Then it is called \emph{$u$-minimal} if every strong-unstable
leaf contained in $\Lambda$ is dense in $\Lambda$.

In some of our examples, $E^{cs}$ itself splits into two continuous invariant subbundles, $E^{ss}$ and $E^{c}$, such that $Df \mid_{E^{ss}}$
is uniformly contracting and is \emph{dominated} by $Df \mid_{E^{c}}$:
$$
\|Df(x)v^{ss}\| \le \omega(x)
\text{ and }
\frac{\|Df(x)v^{ss}\|}{\|Df(x)v^{c}\|}
\le \omega(x)
$$
for any unit vectors $v^{ss}\in E^{ss}_x$ and $v^{c}\in E^{c}_x$,
and any $x \in M$.

\subsection{Markov partitions}\label{s.Markov.partitions}

Let $\cR=\{\cR_1, \dots, \cR_k\}$ be a Markov partition for the linear automorphism
$A:\TT^d\to\TT^d$. By this we mean (see Bowen~\cite[Section~3.C]{Bow75a})
a finite covering of $\TT^d$ by small closed subsets $\cR_i$ such that
\begin{enumerate}
   \item[(a)] each $\cR_i$ is the closure of its interior, and the interiors are pairwise disjoint;
   \item[(b)] for any $a, b \in\cR_i$, $W^u_i(a)$ intersects $W_i^s(b)$ at exactly one point,
which we denote as $[a,b]$;
   \item[(c)] $A(W^s_i(a)) \subset W^s_j(A(a))$ and $A(W^u_i(a)) \supset W^u_j(A(a))$ if $a$ is in
the interior of $\cR_i$ and $A(a)$ is in the interior of $\cR_j$.
\end{enumerate}
Here, $W^u_i(a)$ is the connected component of $W^u(a) \cap \cR_i$ that contains $a$,
and $W^s_i(a)$ is the connected component of $W^s(a) \cap \cR_i$ that contains $a$. We call them, respectively, the \emph{unstable plaque} and
the \emph{stable plaque} through $a$.
Property (b) is called \emph{local product structure}.

The boundary $\partial\cR_i$ of each $\cR_i$ coincides with
$\partial^s\cR_i \cup \partial^u\cR_i$, where $\partial^s\cR_i$ is the set of points
$x$ which are not in the interior of $W^u_i(x)$ inside the corresponding unstable leaf,
and $\partial^u\cR_i$ is defined analogously.
By the local product structure, $\partial^s\cR_i$ consists of stable plaques and
$\partial^u\cR_i$ consists of unstable plaques.
The Markov property (c) implies that the total stable boundary
$\partial^s\cR = \cup_i \partial^s\cR_i$ is forward invariant and the
total unstable boundary
$\partial^u\cR = \cup_i \partial^u\cR_i$ is backward invariant under $A$.
Since the Lebesgue measure on $\TT^d$ is invariant and ergodic for $A$,
it follows that both $\partial^s\cR$ and $\partial^u\cR$ have zero
Lebesgue measure. Then, by Fubini, the intersection of $\partial^s\cR$
with almost every unstable plaque has zero Lebesgue measure in the plaque.
It follows that the same is true for \emph{every} unstable plaque,
since the stable holonomies of $A$, being affine, preserve the class of
sets with zero Lebesgue measure inside unstable leaves.
A similar statement holds for $\partial^u\cR$.

Next, define $\cM = \{\cM_1,\ldots,\cM_k\}$ by $\cM_i = \pi^{-1}(\cR_i)$.
For each $i=1, \dots, k$ and $x\in\cM_i$, let $\xi_i^u(x)$ be the connected component of
$\cF^{uu}(x)\cap\cM_i$ that contains $x$, and $\xi_i^{cs}(x)$ be the pre-image of $W^{s}_i(\pi(x))$. By construction,
\begin{equation}\label{eq.Markov1}
f(\xi_i^{u}(x)) \supset \xi_j^{u}(f(x))
\text{ and }
f(\xi_i^{cs}(x)) \subset \xi_j^{cs}(f(x))
\end{equation}
whenever $x$ is in the interior of $\cM_i$ and $f(x)$ is in the interior of $\cM_j$.
We refer to $\xi_i^u(x)$ and $\xi_i^{cs}(x)$, respectively,
as the \emph{strong-unstable plaque} and the \emph{center-stable plaque}
through $x$.

The local product structure property also extends to $\cM$:
for any $x, y \in \cM_i$ we have that $\xi^u_i(x)$ intersects $\xi^{cs}_i(y)$ at exactly
one point, which we still denote as $[x,y]$. That can be seen as follows.
To begin with, we claim that $\pi$ maps $\xi_i^u(x)$ homeomorphically to $W^u_i(\pi(x))$.
In view of the assumption (b) above, to prove this it is enough to check that
$\pi(\xi_i^u(x)) = W^u_i(\pi(x))$. The inclusion $\subset$ is clear, as both sets are connected.
Since $\xi_i^u(x)$ is compact, it is also clear that $\pi(\xi_i^u(x))$ is closed in $W^u_i(\pi(x))$. To conclude, it suffices to check that it is also open in $W^u_i(\pi(x))$.
Let $b=\pi(z)$ for some $z\in\xi_i^u(x)$. By assumption (b), for any small neighborhood $V$ of $b$ inside $W^u(b)$, there exists a small neighborhood $U$ of $z$ inside $\cF^{uu}_z$
that is mapped homeomorphically to $V$. By definition, a point $w\in U$ is in $\cM_i$ if and
only if $\pi(w)$ is in $\cR_i$. Thus $\pi$ maps $U\cap\cM_i$ homeomorphically to $V\cap\cR_i$.
That implies that $b$ is in the interior of $W^u_i(b)$, and that proves that $\pi(\xi_i^u(x))$
is indeed open in $W^u_i(\pi(x))$. Thus the claim is proved.
Finally, $[x,y]$ is precisely the sole pre-image of $[\pi(x),\pi(y)]$ in $\xi^u(x)$;
notice that this pre-image does belong to $\xi_i^{cs}(y)$, by definition.

This shows that $\cM$ is a Markov partition for $f$, though not necessarily a generating one.
In any event, the fact that $f$ is uniformly expanding along strong-unstable leaves ensures that
$\cM$ is automatically \emph{$u$-generating}, in the sense that
\begin{equation}\label{eq_u-generating}
\bigcap_{n=0}^\infty f^{-n}\left(\xi_i^u\left(f^n(x)\right)\right)
= \{x\} \text{ for every } x\in\Lambda.
\end{equation}
We call \emph{center-stable holonomy} the family of maps $H^{cs}_{x,y}:\xi_i^u(x) \to \xi^u_i(y)$
defined by the condition that
$$
\xi_i^{cs}(z) = \xi^{cs}_i(H^{cs}_{x,y}(z))
$$
whenever $x, y \in \cM_i$ and $z \in \xi_i^u(x)$.

\subsection{Reference measures}

We call \emph{reference measures} the probability measures $\nu^u_{i,x}$
defined on each strong-unstable plaque $\xi_i^u(x)$, $x\in\cM_i$, $i\in\{1, \dots, k\}$ by
$$
\pi_*\nu^u_{i,x} = \Leb^u_{i,\pi(x)} = \text{normalized Lebesgue measure on } W^u_i(\pi(x)).
$$
Since the Lebesgue measure on unstable leaves are preserved by the stable holonomy of $A$ (as the
latter is affine), the construction in the previous section also gives that these reference measures
are preserved by center-stable holonomies of $f$:
\begin{equation}\label{eq_cs-invariant}
\nu_{i,y}^u = \left(H^{cs}_{x,y}\right)_*\nu^u_{i,x}.
\end{equation}
for every $x$ and $y$ in the same $\cM_i$. Similarly, the fact that Lebesgue measure on unstable
leaves has constant Jacobian for $A$ implies that the same is true for the reference measures of $f$:
if $f(\cM_i)$ intersects the interior of $\cM_j$ then
\begin{equation}\label{eq_constant_Jacobian}
f_*\left(\nu^u_{i,x} \mid_{f^{-1}\left(\xi_j^u(f(x))\right)}\right)
= \nu^u_{i,x}\left(f^{-1}\left(\xi_j^u(f(x))\right)\right) \nu^u_{j,f(x)}
\end{equation}
for every $x\in\cM_i\cap f^{-1}(\cM_j)$.
Note that $\nu^u_{i,x}\left(f^{-1}\left(\xi_j^u(f(x))\right)\right)$ is just the normalizing factor
for the restriction of $\nu^u_{i,x}$ to $f^{-1}\left(\xi_j^u(f(x))\right)$.

\begin{remark}\label{r.lowboundary}
Properties \eqref{eq_cs-invariant} and \eqref{eq_constant_Jacobian} imply that
$x \mapsto \nu^u_{i,x}(f^{-1}\xi_j^u(f(x)))$ is constant on $\cM_i \cap f^{-1}(\cM_j)$,
for any $i$ and $j$ such that $f(\cM_i)$ intersects the interior of $\cM_j$.
Thus this function takes only finitely many values.
\end{remark}

\begin{remark}\label{r.equivalentmeasures}
Let $x$ be on the boundary of two different Markov sets $\cM_i$ and $\cM_j$.
Then the restrictions of $\nu^u_{i,x}$ and $\nu^u_{j,x}$ to the intersection
$\xi^u_i(x) \cap \xi^u_j(x)$ are equivalent measures,
as they are both mapped by $\pi_*$ to multiples of the Lebesgue measure on
$W^u_i(\pi(x)) \cap W^u_j(\pi(x))$.
\end{remark}

\begin{remark}\label{r.zeroboundary}
As observed before, the intersection of $\partial^s\cR$ with every
unstable plaque $W^u_i(x)$ has zero Lebesgue measure inside the plaque.
Since $\pi$ sends each $\cM_i$ to $\cR_i$, with each $\xi_i^u(x)$ mapped homeomorphically to $W^u_i(\pi(x))$,
it follows that $\partial^s\cM \cap \xi^u_i(x)$ has zero $\nu^u_{i,x}$-measure for every $x\in\cM_i$ and every $i$.
\end{remark}

\subsection{Gibbs $u$-states}

An $f$-invariant probability measure $\mu$ is called a $c$-Gibbs $u$-state if its conditional
probabilities along strong-unstable leaves coincide with this family of reference measures $\nu_{i,x}^u$.
More precisely, for each $i$, let $\{\mu^u_{i,x}: x \in \cM_i\}$ denote the disintegration of
the restriction $\mu\mid_{\cM_i}$ relative to the partition $\{\xi_i^u(x): x\in \cM_i\}$.
Then we call $\mu$ a \emph{$c$-Gibbs $u$-state} if $\mu^u_{i,x}=\nu_{i,x}^u$ for $\mu$-almost every $x$.
The space of invariant $c$-Gibbs $u$-states of $f$ is denoted by $\Gibbs^u_c(f)$.
A few properties of this set are collected in Proposition~\ref{p.gibbs}.

\begin{main}\label{main.maximal.measures}
If $f:M \to M$ factors over Anosov then an $f$-invariant probability measure $\mu$
is a measure of maximal $u$-entropy if and only if it is a $c$-Gibbs $u$-state.
\end{main}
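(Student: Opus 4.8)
The plan is to prove both implications by relating the $u$-entropy $h_\mu(f,\cF^{uu})$ to the reference measures via a Rokhlin-formula argument, with the Markov partition $\cM$ providing the $u$-generating structure needed to compute entropy explicitly. The key quantity is the conditional entropy $H_\mu(f^{-1}\xi^u \mid \xi^u)$; since \eqref{eq_u-generating} shows $\cM$ is $u$-generating, one can take the measurable partition $\xi^u$ subordinate to $\cF^{uu}$ to be built from the plaques $\xi^u_i(x)$, so that the $u$-entropy becomes computable in terms of the leafwise Jacobians of $f$ relative to the disintegrations $\mu^u_{i,x}$.

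First I would establish the \emph{upper bound}: for any $f$-invariant $\mu$, one has $h_\mu(f,\cF^{uu}) \le h(f,\cF^{uu})$, and more precisely the $u$-entropy is bounded above by the integral of $\log$ of the leafwise volume-growth rate, which for the reference measures equals the topological $u$-entropy (this is where condition (H2), the homeomorphism of strong-unstable leaves onto unstable leaves of $A$, and the constant-Jacobian property \eqref{eq_constant_Jacobian} enter — the reference measures grow at exactly the rate $h(f,\cF^{uu}) = h(A) = $ the topological entropy of the Anosov factor along unstables). The point is a Jensen/convexity inequality: comparing the $\mu^u_{i,x}$-Jacobian of $f$ with the $\nu^u_{i,x}$-Jacobian, one gets that equality in the entropy bound forces the leafwise densities $d\mu^u_{i,x}/d\nu^u_{i,x}$ to be $f$-invariant in the appropriate sense, hence constant along orbits; combined with the expansion along $\cF^{uu}$ and \eqref{eq_cs-invariant}, this should force $\mu^u_{i,x} = \nu^u_{i,x}$ a.e. That gives the implication \emph{maximal $u$-entropy $\Rightarrow$ $c$-Gibbs $u$-state}.

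For the \emph{converse}, suppose $\mu$ is a $c$-Gibbs $u$-state, so $\mu^u_{i,x} = \nu^u_{i,x}$. Then the leafwise Jacobian of $f$ with respect to the conditionals is governed by \eqref{eq_constant_Jacobian}, i.e. it takes finitely many values (Remark~\ref{r.lowboundary}), and the Rokhlin formula computes $h_\mu(f,\cF^{uu})$ as an explicit integral of $-\log$ of these normalizing factors. One then checks that this integral equals $h(f,\cF^{uu})$ — essentially because the same computation applied to the Anosov factor $A$ with Lebesgue measure along unstables gives the topological entropy of $A$, and $\pi$ conjugates the two leafwise systems measure-theoretically (using (H1) and (H2)). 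Here one must be careful about the boundary set $\partial^s\cM$: by Remark~\ref{r.zeroboundary} it has zero $\nu^u_{i,x}$-measure in each plaque, hence zero $\mu$-measure since $\mu$ is $c$-Gibbs, so the overlaps of Markov sets do not affect the entropy computation, and Remark~\ref{r.equivalentmeasures} guarantees the disintegration is well-defined independently of which $\cM_i$ one uses.

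\textbf{The main obstacle} I anticipate is the rigidity step in the first implication: showing that equality in the entropy inequality forces the conditionals to coincide with the reference measures, rather than merely to have $f$-invariant density. This requires a clean version of the Rokhlin/Ledrappier entropy formula valid in the $C^1$ partially hyperbolic setting (the paper's appendix apparently handles such extensions), together with a careful argument that an $f$-equivariant family of densities along $\cF^{uu}$ that is also invariant under center-stable holonomies — by \eqref{eq_cs-invariant} — must be constant, using both the expansion along strong-unstables and ergodic decomposition. A secondary technical point is dealing with a Markov partition that is only $u$-generating and not genuinely generating, so that the classical identification of $h_\mu(f,\cF^{uu})$ with the symbolic entropy along unstables must be carried out directly via \eqref{eq_u-generating} rather than quoted. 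Note that this implication does \emph{not} use the $c$-mostly contracting hypothesis at all — it is purely a consequence of factoring over Anosov, which is consistent with the statement of Theorem~\ref{main.maximal.measures}.
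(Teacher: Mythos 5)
Your route is genuinely different from the paper's: you work directly on $f$ with the Markov plaques $\xi^u_i(x)$ and run the classical Sinai--Ruelle--Bowen/Ledrappier scheme --- write $h_\mu(f,\cF^{uu})=-\int\log\mu^u_x\bigl(f^{-1}\xi^u(f(x))\bigr)\,d\mu$, compare atom by atom with the reference weights $\nu^u_{i,x}\bigl(f^{-1}\xi^u_j(f(x))\bigr)$ via the Gibbs/Jensen inequality, observe that $-\int\log\nu^u_x(\cdots)\,d\mu$ equals $h(A)$ up to a coboundary, and extract rigidity from the equality case together with \eqref{eq_u-generating}. The paper instead pushes everything to the factor: it proves $h_\mu(f,\cF^{uu})\le h_{\pi_*\mu}(A,\cW^u)$ using an \emph{arbitrary} partition $\eta^u$ subordinate to $\cW^u$ pulled back by $\pi$ (Proposition~\ref{p.upperbelow}), characterizes equality as $\pi_*\mu^u_x=w^u_{\pi(x)}$ (Corollary~\ref{c.upperbelow}), and then invokes uniqueness of the measure of maximal entropy of $A$ to conclude $\pi_*\mu=\Leb$. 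One phrasing to fix in your rigidity step: equality in Jensen should be read as equality of the $\mu^u_x$- and $\nu^u_x$-masses of the atoms of $f^{-n}\xi^u\vee\xi^u$ for all $n$ (absolute continuity of $\mu^u_{i,x}$ with respect to $\nu^u_{i,x}$ is not available a priori, so ``densities'' is the wrong language), but the mechanism you identify --- equality on atoms plus expansion and $u$-generation --- is exactly how the paper closes Corollary~\ref{c.upperbelow}.

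The genuine gap is at the first step of the forward implication. For a measure $\mu$ that is only assumed to maximize $u$-entropy, the family of Markov plaques is not known to be a $\mu$-subordinate partition: a point of $\partial^s\cM$ is not interior to its plaque inside the strong-unstable leaf, and the overlaps along $\partial^u\cM$ make the ``partition'' ill-defined, so if $\mu(\partial\cM)>0$ the identity $h_\mu(f,\cF^{uu})=H_\mu(f^{-1}\xi^u\mid\xi^u)$ on which your entire computation rests is unjustified. You address the boundary only in the converse direction, where the $c$-Gibbs property does give $\mu(\partial^s\cM)=0$ by Remark~\ref{r.zeroboundary} (even there one still needs $\mu(\partial^u\cM)=0$, which the paper obtains by first showing $\pi_*\mu=\Leb$). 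The gap is fillable --- for instance, since $\partial^s\cM$ is forward invariant, any ergodic component charging it is supported on $\bigcap_{n\ge 0}f^n(\partial^s\cM)$, which projects into finitely many stable plaques of $A$ and therefore carries zero $u$-entropy, contradicting maximality because $h(A)>0$; and similarly for the backward-invariant set $\partial^u\cM$ --- but some such argument must be supplied before the Markov plaques can be used. The paper's restructuring (do the entropy comparison with a general subordinate partition of the factor, and bring in $\cM$ only after $\pi_*\mu=\Leb$ is established) is precisely what makes this issue disappear.
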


For the next result we need a condition that was already briefly mentioned in the Introduction:
$f$ has \emph{$c$-mostly contracting center} if for any $i\in\{1, \dots, k\}$ and $x\in\cM_i$
\begin{equation}\label{eq_c-mostly}
\limsup_n \frac 1n \log \|Df^n \mid_{E^{cs}_y}\| < 0
\end{equation}
for a positive $\nu^u_{i,x}$-measure subset of points $y\in\xi_i^u(x)$.
This condition is not used in Theorem~\ref{main.maximal.measures}.

\begin{remark}\label{r.terminology}
In our notation for Gibbs $u$-states and mostly contracting center, $c$- stands for \emph{constant},
referring to the fact that our reference measures have locally constant Jacobians.
The classical notions of \emph{Gibbs $u$-state}~\cite{PeS82,BoV00} and \emph{mostly contracting
center}~\cite{BoV00} were defined using instead the Lebesgue measures along strong-unstable leaves as
reference measures.
\end{remark}

Following~\cite{DVY16}, we call a \emph{skeleton} for $f$ any finite set $S = \{p_1, \ldots, p_m\}$
of periodic points such that:
\begin{enumerate}
\item[(a)] each $p_i$ is a hyperbolic saddle of $f$ with stable dimension equal to $\dim E^{cs}$;
\item[)b)] the strong-unstable leaf $\cF^{uu}(x)$ of any $x \in M$ has some transverse
intersection with the union of the stable manifolds through the orbits $\Orb(p_i)$;
\item[(c)] $W^u(p_i)\cap W^s(\Orb(p_j)) = \emptyset$ for any $i \ne j$.
\end{enumerate}
Recall that the \emph{homoclinic class} of a hyperbolic periodic point $p$ is the closure $H(p,f)$
of the set of all transverse intersections between the stable manifolds and the unstable manifolds of
the iterates of $p$.

Theorem~\ref{main} is contained in the following more detailed statement:

\begin{main}\label{main.skeleton}
Let $f:M \to M$ be as in Theorem~\ref{main}.
Then $f$ admits some skeleton $S$, and the number of ergodic $c$-Gibbs $u$-states is exactly $\# S$.

Each ergodic $c$-Gibbs $u$-state is supported on the closure of the strong-unstable leaves through
the orbit of some periodic point $p_i \in S$, and coincides with the homoclinic class of $p_i$.

Finally, these supports are pairwise disjoint, each one of them has finitely many connected
components, and each connected component is a $u$-minimal set.
\end{main}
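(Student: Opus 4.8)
The plan is to deduce Theorem~\ref{main.skeleton} from Theorem~\ref{main.maximal.measures} together with a structural analysis of the space $\Gibbs^u_c(f)$, following the scheme developed by Bonatti--Viana~\cite{BoV00} and Dolgopyat--Viana--Yang~\cite{DVY16} for mostly contracting center, but carried out with the reference (Margulis) measures $\nu^u_{i,x}$ playing the role of the Lebesgue measures along strong-unstable leaves. By Theorem~\ref{main.maximal.measures} it is enough to prove the statement for $c$-Gibbs $u$-states, and by Proposition~\ref{p.gibbs} the set $\Gibbs^u_c(f)$ is non-empty, compact and convex, its extreme points are ergodic, and every ergodic component of a $c$-Gibbs $u$-state is again a $c$-Gibbs $u$-state; moreover accumulation points of Ces\`aro averages of the $\nu^u_{i,x}$ along strong-unstable plaques are $c$-Gibbs $u$-states.

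First I would establish hyperbolicity of the ergodic states. For ergodic $\mu\in\Gibbs^u_c(f)$, the conditionals of $\mu$ along the plaques $\xi^u_i$ are the reference measures $\nu^u_{i,x}$; plugging the defining inequality \eqref{eq_c-mostly} into Kingman's theorem for the subadditive cocycle $\log\|Df^n\mid_{E^{cs}}\|$ shows that the set where $\limsup_n \frac1n\log\|Df^n\mid_{E^{cs}}\|<0$ has positive $\mu$-measure, hence full $\mu$-measure, so the top center Lyapunov exponent of $\mu$ is negative. Invoking the $C^1$ Pesin theory under a dominated splitting from the Appendix, $\mu$-a.e.\ $x$ then has a $C^1$ local stable manifold $W^s_{\loc}(x)$ tangent to $E^{cs}_x$ and contained in $\cF^{cs}(x)$, of uniform size on Pesin blocks of positive $\mu$-measure. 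Since $\pi$ carries $\cF^{cs}$ into the stable leaves of $A$ (hypothesis (H3)) and the stable holonomies of $A$ are affine, the holonomies along these Pesin stable manifolds preserve the reference measures, just as the center-stable plaque holonomies do in \eqref{eq_cs-invariant}. Finally, $\supp\mu$ is $u$-saturated because the conditionals $\nu^u_{i,x}$ have full support in the plaques $\xi^u_i(x)$.

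The core step is a local attraction property: if $\mu$ is ergodic in $\Gibbs^u_c(f)$ and a strong-unstable plaque $D$ has a transverse intersection with the Pesin stable manifold of a density point of a Pesin block of $\mu$, then $\nu^u$-a.e.\ point of $D$ lies in $\basin(\mu)$ (orbits related by Pesin stable holonomy are forward asymptotic, and that holonomy transports the reference measures, so the Gibbs property of $\mu$ passes to $D$). Two consequences follow. First, supports of distinct ergodic $c$-Gibbs $u$-states are disjoint: if $z\in\supp\mu_1\cap\supp\mu_2$, then since both supports are $u$-saturated a strong-unstable plaque inside $\supp\mu_1$ passes arbitrarily near $z$ and crosses the Pesin stable manifold of a $\mu_2$-generic point near $z$, whence a positive-$\mu_1$-measure set is forced into $\basin(\mu_2)$ and $\mu_1=\mu_2$. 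Second, using the uniform version of the mostly contracting estimate underlying Proposition~\ref{p.robust}, the Pesin blocks, and hence the "attracting boxes'' produced by the attraction property, have size and measure bounded below uniformly over all ergodic $c$-Gibbs $u$-states; as the corresponding basins are pairwise disjoint, only finitely many such states exist. In parallel, a closing/shadowing argument in $C^1$ under domination (Appendix) produces, inside the support of each ergodic state, hyperbolic periodic points of stable index $\dim E^{cs}$; by robustness of transverse intersections between strong-unstable leaves and their stable manifolds, finitely many such periodic points $p_1,\dots,p_m$ satisfy property~(b), and after discarding redundant ones so that the homoclinic classes become disjoint (property~(c)) one obtains a genuine skeleton $S$.

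It remains to match ergodic states with elements of $S$ and to describe the supports. For ergodic $\mu$, a generic strong-unstable leaf meets some $W^s(\Orb(p_i))$; the inclination lemma gives $\supp\mu\subseteq\overline{W^u(\Orb(p_i))}$, and since $p_i$ has stable index $\dim E^{cs}$ we have $W^u(p_i)=\cF^{uu}(p_i)$, so $\supp\mu=\overline{\cF^{uu}(\Orb(p_i))}$; with property~(b) and the disjointness of the classes from~(c) this equals $H(p_i,f)$ and contains $p_i$. Disjointness of supports makes the assignment $\mu\mapsto p_i$ injective, while pushing the reference measure along the strong-unstable plaque of each $p_i$ and taking Ces\`aro averages produces an ergodic $c$-Gibbs $u$-state supported in $\overline{\cF^{uu}(\Orb(p_i))}$, so the assignment is onto; hence the number of ergodic $c$-Gibbs $u$-states is exactly $\#S$. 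Writing $\supp\mu=\overline{\cF^{uu}(\Orb(p_i))}$, each connected component is itself $u$-saturated, so $\Orb(p_i)$ meets only finitely many of them and their union is already closed and contains $\cF^{uu}(\Orb(p_i))$; thus there are finitely many components, permuted cyclically by $f$, and each is $u$-minimal because $\cF^{uu}(p_i)$ is dense in its component and, by the inclination lemma again, the strong-unstable leaf of any point of that component accumulates on $\cF^{uu}(p_i)$. The main obstacle I anticipate is the local attraction property and its uniform quantitative form: controlling Pesin stable holonomy in the merely $C^1$ setting, checking that it transports the reference measures correctly, and extracting hyperbolicity estimates that are uniform over the a priori unknown family of all ergodic $c$-Gibbs $u$-states; arranging the skeleton's disjointness condition~(c) without losing the covering condition~(b) is the other delicate point.
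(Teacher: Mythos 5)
Your proposal is correct and follows essentially the same route as the paper's proof: negative center-stable exponents for ergodic $c$-Gibbs $u$-states via the reference-measure conditionals, $C^1$ Pesin stable manifolds under domination, the local attraction property through reference-measure-preserving stable holonomies yielding disjointness of supports and finiteness, a Katok/Liao-type closing argument producing the skeleton points, and the inclination lemma plus Ces\`aro averages of the $\nu^u_{i,p_i}$ for the support description and $u$-minimality (your finiteness argument via uniformly large pairwise disjoint basins is only a cosmetic variant of the accumulation argument in Lemma~\ref{l.finitegibbs}). One small correction: the inclination lemma by itself gives $\overline{W^u(\Orb(p_i))}\subseteq\supp\mu$, not the inclusion $\supp\mu\subseteq\overline{W^u(\Orb(p_i))}$ as you wrote; that reverse inclusion must come from the Ces\`aro-average-plus-disjointness argument you already invoke for surjectivity, which is exactly how the paper proceeds in Lemma~\ref{l.periodic}.
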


\section{Proof of Theorem~\ref{main.maximal.measures}}\label{s.maximal.measures}

Let $f$ as in the assumptions of Theorem~\ref{main.maximal.measures}.
Keep in mind that, by Hu, Wu, Zhu~\cite{HWZ}, the set $\MM^u(f)$ of measures of maximal $u$-entropy
is non-empty, convex and compact, and its extreme points are ergodic measures.
Our goal in this section is to prove that $\MM^u(f)$ coincides with $\Gibbs^u_c(f)$.
We use $\Leb$ to denote the Lebesgue measure of $\TT^d$.

\begin{lemma}\label{l.topologicalentropy}
The topological $u$-entropy $h(f,\cF^{uu})$ of $f$ is equal to the topological entropy $h(A)$ of $A$.
\end{lemma}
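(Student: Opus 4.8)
The plan is to prove the two inequalities $h(f,\cF^{uu}) \le h(A)$ and $h(f,\cF^{uu}) \ge h(A)$ separately, exploiting the factor map $\pi$ and the fact (H2) that $\pi$ maps strong-unstable leaves of $f$ homeomorphically onto unstable leaves of $A$. Recall that $h(f,\cF^{uu})$ is defined as the maximal exponential rate of volume growth $\limsup_n \frac1n \log \Leb^u(f^n(D))$ over strong-unstable disks $D$, and that for the linear Anosov automorphism $A$ one has $h(A) = \log|\det DA\mid_{E^u_A}|$, which also equals the volume growth rate of unstable disks for $A$ (this is classical: $A$ expands unstable volume by the constant factor $|\det DA\mid_{E^u_A}|$ at every step).

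For the upper bound $h(f,\cF^{uu}) \le h(A)$, I would fix a strong-unstable disk $D\subset\cF^{uu}(x)$ and compare the growth of $\Leb^u(f^n(D))$ with that of $\Leb^u(A^n(\pi(D)))$. The point is that $\pi$ restricted to a strong-unstable leaf is a homeomorphism onto an unstable leaf of $A$ intertwining $f$ with $A$ (by (H1) and (H2)), so $\pi(f^n(D)) = A^n(\pi(D))$. The subtlety is that $\pi\mid_{\cF^{uu}}$ need not be bi-Lipschitz, so one cannot directly transfer volume estimates; instead I would use a Bowen-type separated/spanning-set characterization of the $u$-entropy (as in Saghin--Xia or Hu--Hua--Wu). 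An $(n,\epsilon)$-separated subset of $D$ for the dynamics along $\cF^{uu}$ is sent by the uniformly continuous map $\pi$ to an $(n,\delta)$-separated subset of $\pi(D)$ for $A$ along its unstable leaves (after choosing $\delta$ so that $d(\pi(a),\pi(b))<\delta \Rightarrow d(a,b)<\epsilon$ — here one uses that $f$ is uniformly expanding along $\cF^{uu}$, so the relevant leafwise metric is comparable, uniformly in $n$, to the ambient metric along the iterated disk). Counting gives $h(f,\cF^{uu})\le h(A\mid_{\cF^u_A}) = h(A)$.

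For the lower bound $h(f,\cF^{uu}) \ge h(A)$, I would run the argument in reverse: take an unstable disk $D'$ for $A$ realizing the growth rate $h(A)$, lift it through $\pi^{-1}$ (along a single strong-unstable leaf, using the homeomorphism from (H2)) to a strong-unstable disk $D$ of $f$, and push an $(n,\delta)$-separated set in $A^n(D')$ back to an $(n,\epsilon)$-separated set in $f^n(D)$, again using uniform continuity of $\pi$ in one direction and uniform expansion of $f$ along $\cF^{uu}$ in the other. This yields $h(f,\cF^{uu}) \ge h(A)$, completing the proof. (Alternatively, since $h(A)$ equals $h_{\Leb}(A)$ by the variational principle for Anosov maps, one could instead produce an invariant measure $\mu$ for $f$ with $\pi_*\mu = \Leb$ and argue, via the Ledrappier--Young type machinery quoted in the excerpt, that $h_\mu(f,\cF^{uu}) = h_{\Leb}(A,\cF^u_A) = h(A)$, then invoke the variational $u$-principle of Hu--Hua--Wu; but the direct geometric argument above is cleaner.)

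The main obstacle is the lack of bi-Lipschitz control on $\pi$ along strong-unstable leaves: one must be careful that the comparison of separated sets is uniform in $n$. The key technical input that makes this work is that $Df\mid_{E^{uu}}$ is uniformly expanding (inequality \eqref{eq.omega}), so that on the iterated disks $f^n(D)$ the intrinsic leafwise metric is uniformly equivalent to the restriction of the ambient Riemannian metric, with constants independent of $n$; together with the (plain) uniform continuity of the fixed map $\pi$, this is exactly what is needed to transfer $(n,\epsilon)$-spanning and $(n,\epsilon)$-separated counts in both directions with only a bounded (in $n$) loss, which disappears upon taking $\frac1n\log$ and letting $\epsilon\to 0$.
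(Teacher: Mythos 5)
Your argument is correct in outline but follows a genuinely different route from the paper. The paper never touches separated sets: it works directly with the volume-growth definition and the Markov partition built in Section~\ref{s.Markov.partitions}. By compactness, the strong-unstable plaques $\xi^u_i(x)$ and the unstable plaques $W^u_i(\pi(x))$ have uniformly comparable Lebesgue measures (a constant $C$ independent of $i$, $x$); since $f^n(\xi^u_i(x))$ and $A^n(W^u_i(\pi(x)))$ decompose into plaques that are in index-preserving one-to-one correspondence under $\pi$, the same constant $C$ compares the volumes of the $n$-th images for all $n$, and the growth rates coincide. This is shorter and entirely sidesteps the metric-transfer issues you have to confront, at the price of leaning on the Markov structure. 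Your approach instead invokes the Bowen-type characterization of $u$-entropy (legitimately available from Hu--Hua--Wu) and transfers $(n,\epsilon)$-separated sets through $\pi$ in both directions; it is more self-contained with respect to the Markov partition but requires the careful uniformity analysis you describe.

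One point in your write-up should be tightened. The claim that on the iterated disks $f^n(D)$ the intrinsic leafwise metric is \emph{uniformly equivalent} to the ambient metric, with constants independent of $n$, is false as stated: $f^n(D)$ becomes a very long disk that folds back on itself in $M$, so points that are leafwise far apart can be ambient-close. What is true, and what your argument actually needs, is the local statement (ambient $\le$ leafwise always, and leafwise $\le C\cdot$ambient once the leafwise distance is below a fixed scale), combined with the observation that at the \emph{first} separating time $j$ the two points $f^j(a)$, $f^j(b)$ have leafwise distance bounded above by $\max(K\epsilon,\operatorname{diam}D)$ with $K=\sup\|Df|_{E^{uu}}\|$, since at time $j-1$ they were $\epsilon$-close. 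With that bound in hand, a compactness argument on the family of bounded leaf plaques gives the uniform lower bound $\delta$ for $d(\pi f^j(a),\pi f^j(b))$ (and, in the reverse direction, the uniform leafwise continuity of $\pi$ needed for the lower bound). With this correction your proof goes through.
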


\begin{proof}
Let $\Leb_f^u$ and $\Leb_A^u$ denote Lebesgue measure along, respectively, strong-unstable leaves of $f$
and unstable leaves of $A$. It is clear that we may find $C>1$ such that
\begin{equation}\label{eq_proporcional}
\frac{1}{C} \Leb^u_A(W^u_i(\pi(x))) \le \Leb^u_f(\xi_i^u(x)) \le C \Leb^u_A(W^u_i(\pi(x)))
\end{equation}
for every $i=1, \dots, k$ and every $x\in\cM_i$.
By construction, for every $n \ge 1$, the image $A^n(W_i^u(\pi(x)))$ is an (essentially disjoint)
union of unstable plaques $W_j^u(b)$ and, similarly, the image $f^n(\xi_i^u(x))$ is an
(essentially disjoint) union of strong-unstable plaques $\xi_j^u(y)$.
Moreover, there is a one-to-one correspondence between the unstable plaques $W_j^u(b)$ inside
$A^n(W_i^u(\pi(x)))$ and the strong-unstable plaques $\xi_j^u(y)$ inside $f^n(\xi_i^u(x))$ preserving the
index $j$. Thus, \eqref{eq_proporcional} implies that
$$
\frac{1}{C} \Leb^u_A\left(A^n(W^u_i(\pi(x)))\right) \le \Leb^u_f\left(f^n(\xi_i^u(x))\right) \le C \Leb^u_A\left(A^n(W^u_i(\pi(x)))\right)
$$
for every $n \ge 1$, $i=1, \dots, k$ and $x\in\cM_i$. This implies that the two sequences of volumes
grow at the same exponential rate, as claimed.
\end{proof}

Let $\cW^u$ be the unstable foliation of $A$.
Recall that $\cF^{uu}$ denotes the strong-unstable foliation of $f$.

\begin{remark}\label{r.affine}
The $u$-entropy is an affine function of the invariant measure:
\begin{equation}\label{eq.affine}
h_\mu(f,\cF^{uu}) = \int h_{\mu_P}(f,\cF^{uu}) \, d\hat\mu(P)
\end{equation}
where $\{\mu_P : P \in \cP\}$ is the ergodic decomposition of $\mu$ and $\hat\mu$ is
the associated quotient measure.
Indeed the corresponding statement for the classical notion of entropy is given by the Jacobs
theorem (\cite[Theorem~9.6.2]{FET16}), and the present version for partial entropy can be
deduced as follows (see~\cite[Chapter~5]{FET16} for context).
First, $\cP$ is the measurable partition characterized by the condition that points belong
to the same partition element if and only if they have the same Birkhoff average,
$\{\mu_P : P \in \cP\}$ is the Rokhlin disintegration of $\mu$ along $\cP$,
and $\hat\mu$ is the image of $\mu$ under the canonical map $M \to \cP$.
Let $\xi^u$ be any measurable partition subordinate to the strong-unstable foliation
relative to the measure $\mu$. Since Birkhoff averages are constant on
strong-unstable leaves, $\xi^u$ is finer than $\cP$.
By transitivity of the disintegration (see \cite[\S~1.7]{Rok67a} or
\cite[Exercise~5.2.1]{FET16}) it follows that the conditional probabilities
$\{\mu^u_{P,x}: x \in M\}$ of $\mu_P$ along the partition $\xi^u$ coincide with the
conditional probabilities $\{\mu^u_x: x \in \}$ of $\mu$ itself along $\cP$,
for $\hat\mu$-almost every $P$ and $\mu$-almost every $x$. It is clear that $\xi^u$
remains a subordinate partition relative to $\mu_P$ for $\hat\mu$-almost every $P$.
Now, the definition of the $u$-entropy means that
$$
\begin{aligned}
h_\mu(f,\cF^{uu})
& = \int_M \mu^u_x (f^{-1}\xi^u(f(x))) \, d\mu(x)\\
& = \int_\cP \int_M \mu^u_{P,x} (f^{-1}\xi^u(f(x))) \, d\mu_P(x) \, d\hat\mu(P)
 = \int_\cP  h_{\mu_P}(f,\cF^{uu}) \, d\hat\mu(P),
\end{aligned}
$$
as claimed.
\end{remark}

Denote $w=\pi_*\mu$. Then, since $(A,w)$ is a factor of $(f,\mu)$, it follows from Ledrappier, Walters~\cite{LeW77}
that $h_w(A) \le h_\mu(f)$. Let us point out that the inequality for the $u$-entropy that we
obtain in the next proposition points in the opposite direction.

\begin{proposition}\label{p.upperbelow}
Let $\mu$ be an invariant probability measure of $f$, and $w=\pi_*(\mu)$. Then $h_\mu(f,\cF^{uu})\leq h_w(A,\cW^u)$.
\end{proposition}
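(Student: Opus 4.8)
The goal is to bound the $u$-entropy of $(f,\mu)$ from above by the $u$-entropy of its Anosov factor $(A,w)$ along the unstable foliation $\cW^u$. The natural strategy is to compare the two entropies via the factor map $\pi$, using the characterization of $u$-entropy as a conditional entropy $H_\mu(f^{-1}\xi^u\mid\xi^u)$ for a measurable partition subordinate to $\cF^{uu}$. The key geometric input is property (H2): $\pi$ maps each strong-unstable leaf of $f$ homeomorphically onto an unstable leaf of $A$. Consequently, if $\eta^u$ is a measurable partition of $\TT^d$ subordinate to $\cW^u$ (relative to $w$), then $\xi^u := \pi^{-1}\eta^u$ is a partition of $M$ whose elements are contained in single strong-unstable leaves of $f$; however, because the fibers of $\pi$ are contained in center-stable leaves (H3), the plaques $\pi^{-1}(\eta^u(\pi(x)))$ may be \emph{larger} than subordinate plaques for $f$ — they need not satisfy the ``finite measure / decreasing to points'' requirement in the center-stable direction. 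This is the source of the asymmetry noted before the proposition (for ordinary entropy the factor inequality runs the other way).

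First I would fix a measurable partition $\eta^u$ of $\TT^d$ subordinate to $\cW^u$ with respect to $w$, with $A^{-1}\eta^u\prec\eta^u$, and set $\zeta = \pi^{-1}\eta^u$. Then $f^{-1}\zeta\prec\zeta$ and, crucially, the atoms of $\zeta$ are saturated by the fibers of $\pi$, so that the conditional entropy $H_\mu(f^{-1}\zeta\mid\zeta)$ pushes forward exactly: by the Rokhlin formula for factors (or a direct disintegration argument using that $\zeta$ and $f^{-1}\zeta$ are both $\pi$-pullbacks), one gets $H_\mu(f^{-1}\zeta\mid\zeta) = H_w(A^{-1}\eta^u\mid\eta^u) = h_w(A,\cW^u)$. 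The second step is to compare $H_\mu(f^{-1}\zeta\mid\zeta)$ with the actual $u$-entropy $h_\mu(f,\cF^{uu})$. Here one uses that any genuinely subordinate partition $\xi^u$ for $f$ can be taken finer than (a suitable refinement of) $\zeta$ in the strong-unstable direction — indeed $\pi$ is a leafwise homeomorphism, so restricting $\zeta$ to smaller plaques inside each $\cF^{uu}$-leaf yields a subordinate partition, and refining a partition can only increase $H_\mu(f^{-1}\cdot\mid\cdot)$ in a controlled way. More precisely, since $\mathcal M$ is $u$-generating (see \eqref{eq_u-generating}), one can use the Markov plaques $\xi^u_i$ to produce a subordinate partition and run the standard argument that $h_\mu(f,\cF^{uu}) = \lim_n \frac1n H_\mu(\xi^u \mid f^n\xi^u)$ along the iterates, then dominate each term by the corresponding quantity for $(A,w)$ using \eqref{eq_proporcional}-type comparisons of the leafwise measures, i.e.\ by the fact (from the reference-measure construction) that $\pi_*$ carries the leafwise conditionals of $\mu$ to the leafwise conditionals of $w$ up to bounded factors.

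Concretely, I would phrase the second step as: $h_\mu(f,\cF^{uu}) = H_\mu(f^{-1}\xi^u\mid\xi^u)$ for the subordinate partition $\xi^u$ built from $\{\xi^u_i(x)\}$; since $\pi$ maps $\xi^u_i(x)$ homeomorphically onto $W^u_i(\pi(x))$, the partition $\pi(\xi^u)$ is subordinate to $\cW^u$ for $w$, and the Markov relation \eqref{eq.Markov1} together with \eqref{eq_proporcional} shows that the conditional combinatorics of $f^{-1}\xi^u$ inside $\xi^u$ are \emph{no finer} than those of $A^{-1}\pi(\xi^u)$ inside $\pi(\xi^u)$ — because $f(\xi^u_i(x))$ covers the same plaques $\xi^u_j(\cdot)$ that $A(W^u_i(\pi(x)))$ covers, with matching indices. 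Feeding this through the definition $H_\mu(f^{-1}\xi^u\mid\xi^u) = -\int \log \mu^u_x(f^{-1}\xi^u(f(x)))\,d\mu$ and using that $\pi_*\mu^u_{i,x}$ is (a normalization of) $\Leb^u_A$ on $W^u_i(\pi(x))$, one identifies the integrand with the pushforward integrand for $A$, giving $h_\mu(f,\cF^{uu}) \le h_w(A,\cW^u)$.

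The main obstacle I anticipate is making the comparison between a subordinate partition for $f$ and a subordinate partition for $A$ fully rigorous at the level of \emph{measurable} partitions: subordinate partitions are only defined up to plaque size and depend on the measure, so one must check that a common refinement exists which is simultaneously subordinate on both sides and for which the Rokhlin conditional-entropy formula transfers cleanly through $\pi$ (in particular that $\pi$-null sets do not cause trouble, using Remark~\ref{r.zeroboundary} and that $\partial^s\mathcal M$ is $\mu^u$-null when $\mu$ is a $c$-Gibbs $u$-state — or, in general, that the boundary plaques carry no mass after an arbitrarily small modification). A clean way around this is to avoid partitions altogether and argue with the $\limsup$ definition of topological/metric $u$-entropy via volume growth of unstable plaques (Lemma~\ref{l.topologicalentropy} already does the topological comparison), combined with a Katok-type or Brin--Katok-type formula for $h_\mu(f,\cF^{uu})$; but since the paper works with Rokhlin partitions, I would keep the partition argument and invest the effort in the refinement lemma, which is where the real content lies.
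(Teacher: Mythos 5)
Your opening step coincides with the paper's: pull back a subordinate partition $\eta^u$ for $(A,w)$ to $\zeta=\pi^{-1}\eta^u$ and note that $H_\mu(f^{-1}\zeta\mid\zeta)=H_w(A^{-1}\eta^u\mid\eta^u)=h_w(A,\cW^u)$. The gap is in your second step, which is where the content of the proposition lies. To compare $H_\mu(f^{-1}\zeta\mid\zeta)$ with $h_\mu(f,\cF^{uu})$ you propose to match the plaque combinatorics via \eqref{eq.Markov1} and \eqref{eq_proporcional} and then identify the integrand $-\log\mu^u_x\bigl(f^{-1}\xi^u(f(x))\bigr)$ with the corresponding integrand for $(A,w)$, ``using that $\pi_*\mu^u_{i,x}$ is a normalization of $\Leb^u_A$ on $W^u_i(\pi(x))$.'' That assertion is false for a general invariant measure $\mu$: it is precisely the \emph{definition} of a $c$-Gibbs $u$-state, whereas the proposition must hold for arbitrary $\mu$. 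Indeed, Corollary~\ref{c.upperbelow} shows that $\pi_*\mu^u_x=w^u_{\pi(x)}$ is \emph{equivalent} to equality in the proposition, so it cannot be an input here. The comparison \eqref{eq_proporcional} controls leafwise Lebesgue measures, not the conditional measures of an arbitrary $\mu$; and matching the number of plaques inside $f(\xi^u_i(x))$ and $A(W^u_i(\pi(x)))$ only bounds both conditional entropies by the same combinatorial quantity — it does not order them. Note also that your heuristic ``refining a partition can only increase $H_\mu(f^{-1}\cdot\mid\cdot)$'' points in the wrong direction for the inequality you need.

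What is missing is the mechanism the paper actually uses: set $\xi^{uu}=\zeta\vee\cF^{uu}$, which \emph{is} subordinate to $\cF^{uu}$, so $h_\mu(f,\cF^{uu})=H_\mu(f^{-1}\xi^{uu}\mid\xi^{uu})$, and prove the identity $f^{-1}\xi^{uu}=f^{-1}\zeta\vee\xi^{uu}$ (equation \eqref{eq_igual4}), which holds because $\pi$ is a homeomorphism on each strong-unstable leaf: both sides are the intersection of $\pi^{-1}\bigl(A^{-1}\eta^u(\pi(x))\bigr)$ with the leaf through $x$. Then
$$
H_\mu\left(f^{-1}\xi^{uu}\mid\xi^{uu}\right)
=H_\mu\left(f^{-1}\zeta\vee\xi^{uu}\mid\xi^{uu}\right)
=H_\mu\left(f^{-1}\zeta\mid\xi^{uu}\right)
\le H_\mu\left(f^{-1}\zeta\mid\zeta\right),
$$
by monotonicity of conditional entropy under refining the conditioning partition (this is \eqref{eq_igual5}). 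This single identity replaces all of your measure comparisons and refinement lemmas. The paper also treats the case $h_w(A,\cW^u)=0$ separately and reduces the non-ergodic case to the ergodic one via affinity of the partial entropy; those are minor omissions compared with the main gap above.
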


\begin{proof}
First, suppose that $h_w(A,\cW^u)=0$.
By~\cite[Proposition~2.5]{ViY17},  this happens if and only if there is a full $w$-measure subset $X$ that
intersects every unstable leaf of $A$ in at most one point. Then, $\pi^{-1}(X)$ intersects every strong-unstable
leaf of $f$ in at most one point, since $\pi$ is a homeomorphism from strong-unstable leaves of $f$ to
unstable leaves of $A$. Using~\cite[Proposition~2.5]{ViY17} once more, we conclude that $h_\mu(f,\cF^{uu})=0$.

From now on, suppose that $h_w(A,\cW^u)>0$. Initially, assume that $\mu$ is ergodic.
Let $\eta^u$ be any partition subordinate to
the unstable foliation of $A$. Then, by definition,
\begin{equation}\label{eq_igual1}
h_{w}(A,\cW^u)=H_{w}(A^{-1}\eta^{u} \mid \eta^{u}).
\end{equation}
Denote by $\xi^{cu}$ the pull-back $\pi^{-1}(\eta^u)$ of $\eta^u$.
Then, since $\pi_*(\mu)=w$, we have
\begin{equation}\label{eq_igual2}
H_{\mu}(f^{-1}\xi^{cu} \mid \xi^{cu}) = H_{w}(A^{-1}\eta^u \mid \eta^u).
\end{equation}
Next, consider $\xi^{uu} = \xi^{cu} \vee \cF^{uu}$, that is, the partition of $M$ whose elements
are the intersections of the elements of $\xi^{cu}$ with the strong-unstable leaves of $f$.
It is clear that $\xi^{uu}$ is finer than $\xi^{cu}$. Moreover,
our assumptions on $\pi$ ensure that $\xi^{uu}$ is subordinate to $\cF^{uu}$. Thus,
\begin{equation}\label{eq_igual3}
h_\mu(f,\cF^{uu})=H_{\mu}(f^{-1} \xi^{uu} \mid \xi^{uu}).
\end{equation}
We claim that
\begin{equation}\label{eq_igual4}
f^{-1} \xi^{uu} = f^{-1}\xi^{cu} \bigvee \xi^{uu}.
\end{equation}
Indeed, given any $x\in M$, both $f^{-1}\xi^{uu}(x)$ and $f^{-1}\xi^{cu}(x) \cap \xi^{uu}(x)$
are contained in the strong-unstable leaf through $x$.
Moreover, both are mapped to $A^{-1}\eta^u (\pi(x))$ under $\pi$.
Since $\pi$ is a homeomorphism on strong-unstable leaves, it follows that the two sets
coincide, as claimed.
Now, using Rokhlin~\cite[\S~5.10]{Rok67a}, it follows that
\begin{equation}\label{eq_igual5}
\begin{aligned}
H_{\mu}\left(f^{-1} \xi^{cu} \mid \xi^{cu}\right)
& \geq H_{\mu}\left(f^{-1} \xi^{cu} \mid \xi^{uu}\right)\\
& = H_{\mu}\left(f^{-1} \xi^{cu} \bigvee \xi^{uu} \mid \xi^{uu}\right)
= H_\mu\left(f^{-1} \xi^{uu} \mid \xi^{uu}\right).
\end{aligned}
\end{equation}
Combining this with \eqref{eq_igual1}, \eqref{eq_igual2} and \eqref{eq_igual3} we get
the statement of the proposition when $\mu$ is ergodic.

The general case now follows from the fact that the $u$-entropy is affine.
Indeed, \eqref{eq.affine} means that
\begin{equation} \label{eq.affine1}
h_\mu(f,\cF^{uu})= \int_\cP  h_{\mu_{P(x)}}(f,\cF^{uu}) \, d\mu(x).
\end{equation}
Analogously,
\begin{equation} \label{eq.affine2}
h_\mu(A,\cW^{u})= \int_\cQ  h_{w_{Q(a)}}(A,\cW^{u}) \, dw(a).
\end{equation}
Here $\cQ$ is the partition of $\TT^d$ defined by the condition that two points are
in the same partition element if and only if they have the same Birkhoff average for $A$.
It is clear that $\pi^{-1}(\cQ)$ is coarser than $\cP$.
Thus $\pi(P(x)))$ is contained in $\cQ(\pi(x))$ and $\pi_*\mu_{P(x)} = w_{Q(\pi(x))}$
for almost every $x$. By the ergodic case treated previously, we also have that
$$
h_{\mu_{P(x)}}(f,\cF^{uu}) \le h_{w_{Q(\pi(x))}}(A,\cW^{u}).
$$
Now the claim follows by combining this with \eqref{eq.affine1} and \eqref{eq.affine2}.
\end{proof}

Next, we want to prove that the equality in Proposition~\ref{p.upperbelow} carries substantial
rigidity. To state this in precise terms, let $\eta^u$ and $\xi^u$ be partitions as in
the proof of the proposition, and $\{w^u_a: a \in \TT^d\}$ and $\{\mu^u_x: x \in M\}$
be Rokhlin disintegrations of $w=\pi_*\mu$ and $\mu$, respectively, relative to those
partitions.

\begin{corollary}\label{c.upperbelow}
We have $h_\mu(f,\cF^{uu}) = h_w(A,\cW^u)$ if and only if $\pi$ sends conditional probabilities
of $\mu$ relative to $\xi^u$ to conditional probabilities  of $w$ relative to $\eta^u$, meaning that
$$
\pi_*\mu^u_x = w^u_{\pi(x)} \text{ for $\mu$-almost every $x$.}
$$
In particular, each $\mu^u_x$ is determined by $w^u_{\pi(x)}$.
\end{corollary}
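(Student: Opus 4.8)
The plan is to extract the asserted rigidity from the equality case of the chain \eqref{eq_igual1}--\eqref{eq_igual5} in the proof of Proposition~\ref{p.upperbelow}, after iterating it. Keep the partitions $\xi^{cu}=\pi^{-1}(\eta^u)$ and $\xi^{uu}=\xi^{cu}\vee\cF^{uu}$ from that proof, and take $\xi^u=\xi^{uu}$ (this is harmless: any two partitions subordinate to $\cF^{uu}$ have mutually determined Rokhlin disintegrations). All of \eqref{eq_igual1}--\eqref{eq_igual5} are valid for an arbitrary invariant measure, so $h_\mu(f,\cF^{uu})=h_w(A,\cW^u)$ turns every line into an equality; in particular $H_\mu(f^{-1}\xi^{cu}\mid\xi^{cu})=H_\mu(f^{-1}\xi^{cu}\mid\xi^{uu})$. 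As $\xi^{uu}$ refines $\xi^{cu}$ and $f^{-1}\xi^{cu}\vee\xi^{uu}=f^{-1}\xi^{uu}$, this says exactly that $f^{-1}\xi^{cu}$ and $\xi^{uu}$ are conditionally independent given $\xi^{cu}$ (the equality case of Rokhlin's inequality $H_\mu(\alpha\mid\beta)\ge H_\mu(\alpha\mid\gamma)$ for $\gamma$ finer than $\beta$, used in \eqref{eq_igual5}; all entropies here are finite by Lemma~\ref{l.topologicalentropy}).

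To make one subordinate partition enough, I would iterate. Because $A$ expands along $\cW^u$, the partition $A^{-1}\eta^u$ refines $\eta^u$, so the $f^{-n}\xi^{cu}=\pi^{-1}(A^{-n}\eta^u)$ are nested; hence $\bigvee_{i=1}^n f^{-i}\xi^{cu}=f^{-n}\xi^{cu}$ and $f^{-n}\xi^{cu}\vee\xi^{uu}=f^{-n}\xi^{uu}$, and the chain rule for conditional entropy together with invariance of $\mu$ gives
$$
H_\mu(f^{-n}\xi^{cu}\mid\xi^{cu})=n\,h_w(A,\cW^u)\quand H_\mu(f^{-n}\xi^{cu}\mid\xi^{uu})=H_\mu(f^{-n}\xi^{uu}\mid\xi^{uu})=n\,h_\mu(f,\cF^{uu}).
$$
Thus equality of $u$-entropies yields, for \emph{every} $n\ge1$, conditional independence of $f^{-n}\xi^{cu}$ and $\xi^{uu}$ given $\xi^{cu}$.

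The third step, which I expect to be the main obstacle, converts this into the statement about conditional measures. Fix a $\xi^{cu}$-atom $C=\xi^{cu}(x)=\pi^{-1}(\eta^u(\pi(x)))$, write $\mu^{cu}_C$ for the conditional of $\mu$ on $C$, and let $\{\mu^u_y:y\in C\}$, $\hat\mu_C$ be the disintegration of $\mu^{cu}_C$ along $\xi^{uu}\mid C$; by transitivity of disintegration these $\mu^u_y$ coincide with the global ones, while $\pi_*\mu^{cu}_C=w^u_{\pi(x)}$ because $\xi^{cu}=\pi^{-1}(\eta^u)$ and $\pi_*\mu=w$. Since $\pi$ maps each strong-unstable plaque homeomorphically onto an unstable plaque of $A$ (Section~\ref{s.Markov.partitions}), it carries every $\xi^{uu}(y)$, $y\in C$, homeomorphically onto $\eta^u(\pi(x))$; and the atoms of $f^{-n}\xi^{cu}\mid C$ are precisely the sets $\pi^{-1}(D)$, $D$ an atom of $A^{-n}\eta^u$ inside $\eta^u(\pi(x))$. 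Consequently $\mu^u_y(\pi^{-1}(D))=(\pi_*\mu^u_y)(D)$ and $\mu^{cu}_C(\pi^{-1}(D))=w^u_{\pi(x)}(D)$, so the conditional independence of step two reads: $(\pi_*\mu^u_y)(D)=w^u_{\pi(x)}(D)$ for $\hat\mu_C$-a.e.\ $y$ and a.e.\ $C$, for every atom $D$ of every $A^{-n}\eta^u$ inside $\eta^u(\pi(x))$. Because the $A^{-n}\eta^u$, $n\ge0$, restrict to $\eta^u(\pi(x))$ as a refining sequence whose atoms shrink to points — the $u$-generating property \eqref{eq_u-generating} transported by $\pi$ — two probability measures agreeing on all their atoms must coincide, so $\pi_*\mu^u_y=w^u_{\pi(x)}$ a.e., that is, $\pi_*\mu^u_x=w^u_{\pi(x)}$ for $\mu$-a.e.\ $x$. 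The case $h_w(A,\cW^u)=0$ is handled apart and is immediate: then $h_\mu(f,\cF^{uu})=0$ too, so by~\cite[Proposition~2.5]{ViY17} (via the ergodic decomposition if needed) $\mu^u_x=\delta_x$ and $w^u_{\pi(x)}=\delta_{\pi(x)}$. The closing assertion follows because $\pi$ restricted to $\xi^u(x)$ is a homeomorphism, so $\mu^u_x$ is the pull-back of $w^u_{\pi(x)}$ under it.

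For the converse only the $n=1$ instance is needed, read backwards: if $\pi_*\mu^u_x=w^u_{\pi(x)}$ $\mu$-a.e., then inside a.e.\ $\xi^{cu}$-atom all the measures $\pi_*\mu^u_y$ coincide, hence agree on $A^{-1}\eta^u$-atoms, hence $f^{-1}\xi^{cu}$ and $\xi^{uu}$ are conditionally independent given $\xi^{cu}$, which forces \eqref{eq_igual5} to be an equality and gives $h_\mu(f,\cF^{uu})=h_w(A,\cW^u)$. The genuinely delicate part is the bookkeeping in the third step — keeping the disintegrations of $\mu$ and of $w$ consistent and correctly matching the measures $\pi_*\mu^u_y$ with the atoms of the $A^{-n}\eta^u$ through the plaque homeomorphisms — rather than any single hard estimate.
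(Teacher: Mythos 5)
Your argument is correct and follows essentially the same route as the paper: extract the equality case of the chain \eqref{eq_igual1}--\eqref{eq_igual5}, iterate using invariance of $\mu$ and the nestedness of the $f^{-n}\xi^{cu}$ to get conditional independence of $f^{-n}\xi^{cu}$ and $\xi^{uu}$ given $\xi^{cu}$ for every $n$, identify the conditionals of $\mu$ on $f^{-n}\xi^{cu}$-measurable sets with those of $w$ via $\pi_*\mu^{cu}_x=w^u_{\pi(x)}$, and conclude by the generating property of the refining atoms along strong-unstable plaques; the converse likewise matches. Your explicit treatment of the case $h_w(A,\cW^u)=0$ and the atom-by-atom bookkeeping are minor presentational differences, not a different method.
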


\begin{proof}
Let $\{\mu^{cu}_y: y \in M\}$ denote a disintegration of $\mu$ relative to the partition $\xi^{cu}=\pi^{-1}(\eta^u)$.
Since $w=\pi_*\mu$, the fact that the
disintegration is essentially unique implies that
\begin{equation}\label{eq_igual6}
\pi_*\mu_y^{cu} = w^u_{\pi(y)} \text{ for $\mu$-almost every $y$.}
\end{equation}
Since $\xi^u$ is finer than $\xi^{cu}$, the conditional probabilities of each
$\mu^{cu}_y$ relative to the partition $\xi^u$ coincide $\mu$-almost everywhere with the
conditional probabilities $\mu^u_x$ of $\mu$ itself (transitivity of the disintegration,
see \cite[\S~1.7]{Rok67a} or \cite[Exercise~5.2.1]{FET16}).

First, we prove the 'only if' claim. Assume that  $h_\mu(f,\cF^{uu}) = h_w(A,\cW^u)$.
Then it follows from the relations \eqref{eq_igual1}--\eqref{eq_igual5} that
\begin{equation}\label{eq_igual7}
H_{\mu}(f^{-1} \xi^{cu} \mid \xi^{cu}) = H_{\mu}(f^{-1} \xi^{cu} \mid \xi^{u}).
\end{equation}
Since $\mu$ is $f$-invariant,
$$
H_{\mu}(f^{-n} \xi^{cu} \mid \xi^{cu})= nH_{\mu}(f^{-1} \xi^{cu} \mid \xi^{cu})
\text{ and }
H_{\mu}(f^{-n} \xi^{cu} \mid \xi^{u})= nH_{\mu}(f^{-1} \xi^{cu} \mid \xi^{u})
$$
for every $n \ge 1$. Thus, \eqref{eq_igual7} implies
\begin{equation}\label{eq_igual8}
H_{\mu}(f^{-n} \xi^{cu} \mid \xi^{cu}) = H_{\mu}(f^{-n} \xi^{cu} \mid \xi^{u})
\end{equation}
for every $n \ge 1$.
By Rokhlin~\cite[\S~5.10]{Rok67a}, this can only happen if $f^{-n} \xi^{cu}$ and $\xi^u$
are independent relative to $\xi^{cu}$, that is (see \cite[\S~1.7]{Rok67a}), if
$$
\mu^{cu}_y(A \cap B) = \mu^{cu}_y(A) \mu^{cu}_y(B)
$$
for any $f^{-n}\xi^{cu}$-measurable set $A$, any $\xi^u$-measurable set $B$,
and $\mu$-almost every $y$. See Figure~\ref{fig:Fig2}.

\begin{figure}[ht]
\psfrag{A}{$A$}\psfrag{B}{$B$}
\psfrag{x1}{$\xi^u(x)$}\psfrag{x2}{$\xi^{cs}(x)$}
\begin{center}
\includegraphics[height=1.8in]{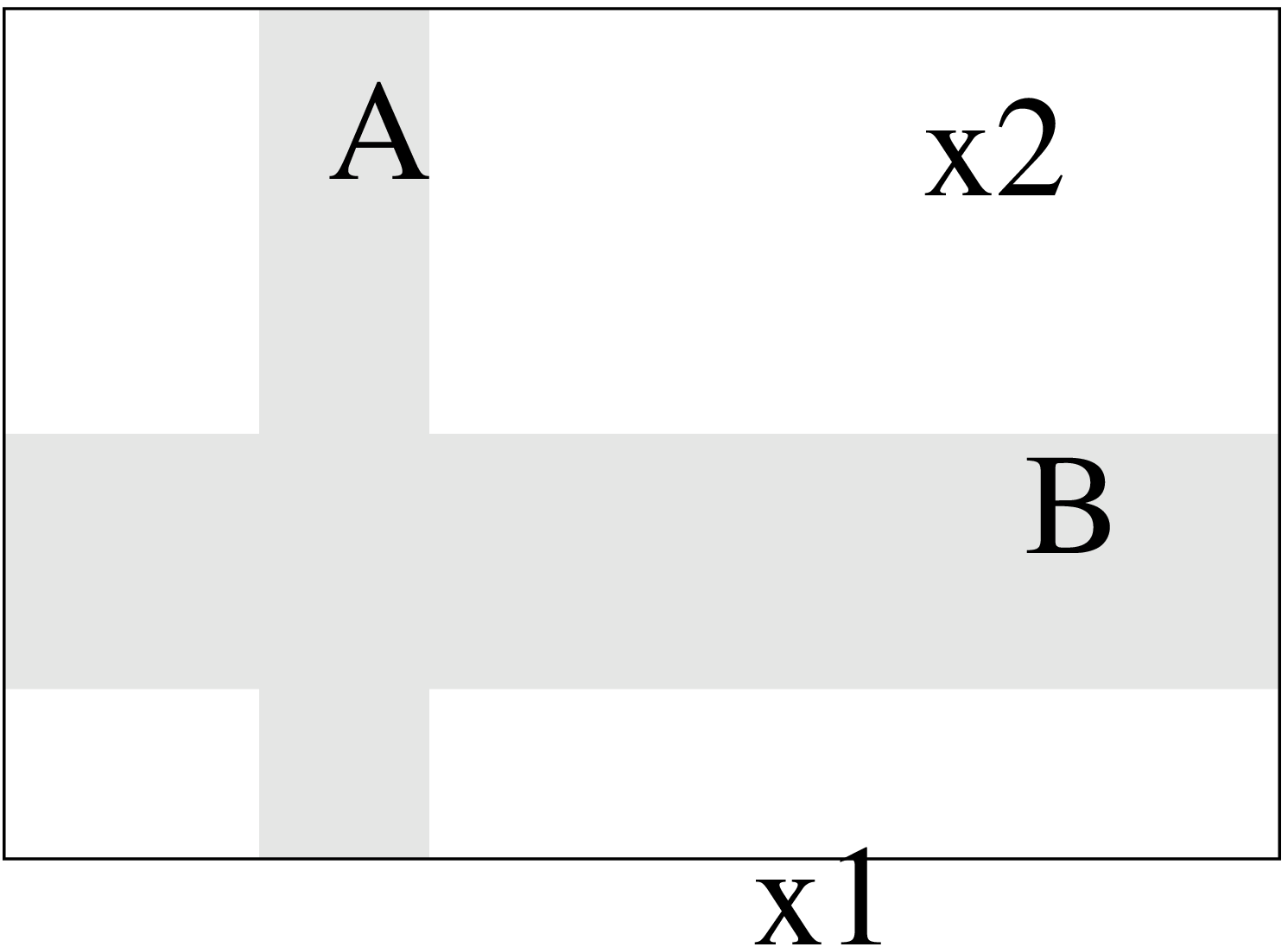}
\end{center}
\caption{\label{fig:Fig2}}
\end{figure}

By considering $B$ arbitrarily small, we see that the conditional measure $\mu_x^u(A)$
is independent of $x$, and so
\begin{equation}\label{eq_igual9}
\mu_x^u(A) = \mu^{cu}_x(A)
\end{equation}
for $\mu$-almost every $y$, $\mu^{cu}_y$-almost every $x$ in $\xi^{cu}(y)$,
and any $f^{-n}\xi^{cu}$-measurable set $A$. Moreover, \eqref{eq_igual6} implies that
$\mu^{cu}_x(A)=w_{\pi(x)}^u(\pi(A))$ for every $f^{-n}\xi^{cu}$-measurable set $A$.
This implies that
\begin{equation}\label{eq.three_measures}
\pi_*\mu_x^u=\pi_*(\mu_x^{cu}) = w_{\pi(x)}^u
\end{equation}
restricted to the $\sigma$-algebra of $f^{-n}\xi^{cu}$-measurable subsets of each $\xi^u(x)$,
for any $n$.
Now, since $f$ is (uniformly) expanding along strong-unstable leaves, the family of partitions
$\{f^{-n}\xi^{cu}: n\ge 1\}\cap \xi^u(x)$ generates the $\sigma$-algebra of all measurable
subsets of $\xi^u(x)$ for every $x$. Thus, the previous identity yields the `only if' statement:
$\pi_*\mu_x^u=\pi_*(\mu_x^{cu}) = w_{\pi(x)}^u$ for $\mu$-almost every $x$.

Next, assume that $\pi_*\mu^u_x = w^u_{\pi(x)}$ for $\mu$-almost
every $x$. Using \eqref{eq_igual6}, it follows that $\pi_*\mu^u_x = \pi_*\mu^{cu}_x$ for
$\mu$-almost every $x$, which implies that
\begin{equation}\label{eq.20bis}
\mu_x^u(A) = \mu^{cu}_x(A)
\end{equation}
for $\mu$-almost every $x$ and any $f^{-n}\xi^{cu}$-measurable set $A$. Compare \eqref{eq_igual9}.
Recalling that
$$
\begin{aligned}
H_{\mu}(f^{-1} \xi^{cu} \mid \xi^{cu})
& = \int H_{\mu^{cu}_x}\left(f^{-1}(\xi^{cu})\mid_{\xi^{cu}(x)}\right) \, d\mu(x) \text{ and}\\
H_{\mu}(f^{-1} \xi^{cu} \mid \xi^{u})
& = \int H_{\mu^{u}_x}(f^{-1}\left(\xi^{u})\mid_{\xi^{cu}(x)}\right) \, d\mu(x)
\end{aligned}
$$
we see that \eqref{eq.20bis} implies \eqref{eq_igual7}.
Using once more the relations \eqref{eq_igual1}--\eqref{eq_igual5},
we see that the later implies that $h_\mu(f,\cF^{uu}) = h_w(A,\cW^u)$.
This proves the `if' statement.

Finally, recall that $\pi$ is a homeomorphism onto its image restricted to each
strong-unstable leaf and, thus, to each element of $\xi^u$.
Thus, the relation $\pi_*\mu^u_x = w^u_{\pi(x)}$ means that each $\mu^u_x$ is the push-forward
of $w^u_{\pi(x)}$ under the inverse map $\pi^{-1}$.
This implies the last claim in the statement.
\end{proof}

\begin{corollary}\label{c.projection_mu}
If $\mu\in \MM^u(f)$ then $w=\pi_*\mu$ is the Lebesgue measure $\Leb$ of $\TT^d$.
In particular, $\mu$ is a $c$-Gibbs $u$-state.
\end{corollary}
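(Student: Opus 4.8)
The plan is to use the entropy estimates already established to pin down $w=\pi_*\mu$ as the unique measure of maximal entropy of $A$, and then to feed the resulting equality case into Corollary~\ref{c.upperbelow}. Assuming $\mu\in\MM^u(f)$, I would first assemble the chain
$$
h(A)=h(f,\cF^{uu})=h_\mu(f,\cF^{uu})\le h_w(A,\cW^u)\le h_w(A)\le h(A),
$$
using in turn Lemma~\ref{l.topologicalentropy}, the definition of $\MM^u(f)$, Proposition~\ref{p.upperbelow}, inequality~\eqref{eq.two_entropies} applied to $A$, and the variational principle for $A$. All the inequalities must then be equalities; in particular $h_w(A)=h(A)$, so $w$ is a measure of maximal entropy for $A$. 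Since $A$ is a hyperbolic linear automorphism of $\TT^d$, its measure of maximal entropy is unique and equals the Lebesgue (Haar) measure, whence $w=\Leb$. This gives the first assertion.

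For the second assertion, the same chain shows $h_\mu(f,\cF^{uu})=h_w(A,\cW^u)$, so Corollary~\ref{c.upperbelow} applies and gives $\pi_*\mu^u_x=w^u_{\pi(x)}$ for $\mu$-almost every $x$. Because $w=\Leb$, each $w^u_a$ is normalized Lebesgue measure on the unstable plaque of $A$ through $a$; comparing with the defining relation $\pi_*\nu^u_{i,x}=\Leb^u_{i,\pi(x)}$ of the reference measures and using that $\pi$ maps each strong-unstable leaf of $f$ homeomorphically onto the corresponding unstable leaf of $A$, I would conclude that on every small strong-unstable plaque the measure $\mu^u_x$ agrees, up to a normalizing constant, with $\nu^u_{i,x}$. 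Since the leafwise conditionals of an invariant measure along $\cF^{uu}$ are intrinsically defined up to such normalization (essential uniqueness of the Rokhlin disintegration), this forces the disintegration of $\mu\mid_{\cM_i}$ along $\{\xi^u_i(x):x\in\cM_i\}$ to be $\{\nu^u_{i,x}\}$ for each $i$; that is, $\mu$ is a $c$-Gibbs $u$-state.

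The main obstacle will be this last step: passing from the identity $\pi_*\mu^u_x=w^u_{\pi(x)}$, established for the particular subordinate partitions $\xi^u$ and $\eta^u$ fixed earlier, to the statement about the disintegration along the Markov plaques $\xi^u_i$ that appears in the definition of a $c$-Gibbs $u$-state. I expect this to follow from the standard fact that the conditional measures of an $f$-invariant measure along the expanding foliation $\cF^{uu}$ do not depend, after restriction to smaller plaques and renormalization, on the choice of subordinate partition. Everything else is a matter of concatenating previously established (in)equalities and invoking the uniqueness of the measure of maximal entropy of a hyperbolic toral automorphism.
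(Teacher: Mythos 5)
Your argument is correct and follows essentially the same route as the paper: the chain of (in)equalities forcing $w$ to be the measure of maximal entropy of $A$, hence Lebesgue, followed by the equality case of Corollary~\ref{c.upperbelow}. The ``main obstacle'' you flag is resolved in the paper exactly as you anticipate: since $w=\Leb$ and $\partial\cR$ has zero Lebesgue measure, one gets $\mu(\partial\cM)=0$, so the Markov plaques $\xi^u_i(x)$ (with $x$ in the interior of $\cM_i$) themselves form a subordinate partition to which Corollary~\ref{c.upperbelow} applies directly, and no change of subordinate partition is needed.
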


\begin{proof}
By Proposition~\ref{p.upperbelow} and Lemma~\ref{l.topologicalentropy},
every $\mu\in \MM^u(f)$ satisfies
$$
h_w(A,\cW^u)\geq h_\mu(f,\cF^{uu})=h(f,\cF^{uu})=h(A).
$$
By \eqref{eq.two_entropies}, we also have that $h_w(A) \ge h_w(A,\cW^u)$.
It follows that $w$ is a measure of maximal entropy for $A$ which, by uniqueness,
implies that $w$ is the Lebesgue measure of $\TT^d$.
Since the total boundary $\partial\cR$ of the Markov partition $\cR$ has zero Lebesgue
measure, as observed at the beginning of Section~\ref{s.Markov.partitions},
it follows that the $\mu$-measure of $\partial\cM$ is equal to zero.
Let $\xi^u$ be the family of all $\xi^u_i(x)$ with $x$ in the interior of some $\cM_i$,
and, similarly, $\eta^u$ be the family of all $W^u_i(a)$ with $a$ in the interior of some
$\cR_i$. By the previous observation, these are partitions of (full measure subsets of)
$M$ and $\TT^d$, respectively. Moreover, $\eta^u$ is subordinate to the unstable foliation
of $A$, and $\xi^u$ is its pull-back under the map $\pi$ to each strong-unstable leaf
of $f$, which means that these two partitions verify the assumptions of
Proposition~\ref{p.upperbelow} and Corollary~\ref{c.upperbelow}.
This means that we are in a  position to apply the corollary, and conclude that
$$
\pi_*\mu_x^u = \Leb^u_{\pi(x)} = \text{ normalized Lebesgue measure on $\eta^u_{\pi(x)}$}
$$
for $\mu$-almost every $x$. This proves that $\mu$ is a $c$-Gibbs $u$-state.
\end{proof}

All that is left to finish the proof of Theorem~\ref{main.maximal.measures}, is to get the
converse statement, that is, that every $c$-Gibbs $u$-state is a measure of maximal $u$-entropy.
We will do that with the aid of the following lemma, which is of independent interest:

\begin{lemma}\label{l.Gibbs_decomposition}
Almost every ergodic component of a $c$-Gibbs $u$-state is also a $c$-Gibbs $u$-state.
\end{lemma}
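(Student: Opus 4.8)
The statement asserts that the property of being a $c$-Gibbs $u$-state passes to almost every ergodic component. The natural strategy is to exploit the essential uniqueness of Rokhlin disintegrations together with transitivity of the disintegration, in the same spirit as the argument used in Remark~\ref{r.affine} for the affinity of $u$-entropy. Let $\mu \in \Gibbs^u_c(f)$, and let $\{\mu_P : P \in \cP\}$ be its ergodic decomposition, with $\hat\mu$ the associated quotient measure. Fix $i \in \{1,\dots,k\}$ and consider the partition $\xi^u_i = \{\xi^u_i(x) : x \in \cM_i\}$ of $\cM_i$ into strong-unstable plaques. The key point is that this partition is \emph{subordinate} to $\cF^{uu}$ and, since Birkhoff averages are constant along strong-unstable leaves, $\xi^u_i$ refines the partition $\cP$ inside $\cM_i$. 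Therefore, by transitivity of the disintegration, the conditional probabilities of $\mu_P$ along $\xi^u_i$ coincide $\hat\mu$-almost everywhere with the conditional probabilities $\mu^u_{i,x}$ of $\mu$ itself along $\xi^u_i$. Since $\mu$ is a $c$-Gibbs $u$-state, the latter equal the reference measures $\nu^u_{i,x}$ for $\mu$-almost every $x$.

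The plan is therefore as follows. First I would fix $i$ and write down the ergodic decomposition of the restriction $\mu\mid_{\cM_i}$ as $\mu\mid_{\cM_i} = \int (\mu_P\mid_{\cM_i}) \, d\hat\mu(P)$ (up to normalization on the set $\{P : \mu_P(\cM_i) > 0\}$). Next I would invoke transitivity of the Rokhlin disintegration (as in \cite[\S~1.7]{Rok67a} or \cite[Exercise~5.2.1]{FET16}): since $\cP\mid_{\cM_i}$ is coarser than $\xi^u_i$, disintegrating $\mu\mid_{\cM_i}$ first along $\cP$ and then each $\mu_P\mid_{\cM_i}$ along $\xi^u_i$ gives back the disintegration of $\mu\mid_{\cM_i}$ along $\xi^u_i$. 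Concretely, for $\hat\mu$-almost every $P$ and $\mu_P$-almost every $x \in \cM_i$, the conditional measure $(\mu_P)^u_{i,x}$ equals $\mu^u_{i,x} = \nu^u_{i,x}$. Taking a countable union over $i \in \{1,\dots,k\}$ and using that $\bigcup_i \Int(\cM_i)$ has full measure (or simply that the $\cM_i$ cover $M$), I conclude that for $\hat\mu$-almost every $P$, the measure $\mu_P$ has conditional probabilities along strong-unstable plaques equal to the reference measures $\nu^u_{i,x}$ for $\mu_P$-almost every $x$ — that is, $\mu_P$ is a $c$-Gibbs $u$-state.

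The step that requires the most care is checking that $\xi^u_i$ really refines $\cP$ when both are viewed as partitions of $\cM_i$, and that the transitivity identity is applied to the correct restricted measures with the correct normalizations. One should be careful that $\cP$ is a partition of all of $M$ (into level sets of the Birkhoff-average map), so what matters is the induced partition on $\cM_i$, and one must restrict attention to the set of ergodic components that give positive mass to $\cM_i$; on the complementary set the assertion is vacuous for that particular $i$. Since each $\mu_P$ is automatically $f$-invariant, once the conditional-measure identity is established $\mu_P$ satisfies the defining property of $\Gibbs^u_c(f)$ verbatim. I expect no serious obstacle beyond this bookkeeping; the heart of the argument is the standard fact that a partition subordinate to an expanding foliation refines the ergodic-decomposition partition, combined with transitivity of disintegrations.
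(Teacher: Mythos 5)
Your proposal is correct and follows essentially the same route as the paper's proof: restrict the ergodic decomposition to each Markov element $\cM_i$, observe that $\cP$ is coarser than $\xi^u_i$ because Birkhoff averages are constant along strong-unstable leaves, and apply transitivity of the Rokhlin disintegration to identify the conditional measures of $\mu_P$ with those of $\mu$, hence with the reference measures. The bookkeeping points you flag (normalization on $\{P:\mu_P(\cM_i)>0\}$ and the finite union over $i$) are handled implicitly in the paper and pose no difficulty.
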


\begin{proof}
Let $\{\mu_P: P \in\cP\}$ be the ergodic decomposition of a $c$-Gibbs $u$-state $\mu$,
and $\hat\mu$ be the corresponding quotient measure in $\cP$. This means that every $\mu_P$
is an ergodic mesure and
\begin{equation}\label{eq.disintegra1}
\mu = \int_\cP \mu_P \, d\hat\mu(P).
\end{equation}
For each $i$, let $\mu_i = \mu \mid_{\cM_i}$ and $\cP_i$ be the restriction of $\cP$ to
the domain $\cM_i$. For each $P\in\cP_i$, denote by $\mu_{P,i}$ the restriction of $\mu_P$
to $\cM_i$. Restricting \eqref{eq.disintegra1} to the measurable subsets of $\cM_i$ we see
that
\begin{equation}\label{eq.desintegra2}
\mu_i = \int_{\cP_i} \mu_{P,i} \, d\hat\mu(P).
\end{equation}

As before, we denote by $\{\mu^u_{i,x}: x \in \cM_i\}$ the disintegration of $\mu_i$ with
respect to the partition $\xi_i^u$ of $\cM_i$ into strong-unstable plaques.
The fact that Birkhoff averages are constant on strong-unstable leaves ensures that $\cP_i$
is a coarser partition than $\xi^u_i$ (see Remark~\ref{r.affine}).
Moreover, for each $P\in\cP_i$, let $\{\mu^u_{P,i,x}: x\in P\}$ be the disintegration of
$\mu_{P,i}$ with respect to the partition $\xi_i^u$.
By the transitivity of the disintegration (see Remark~\ref{r.affine}) and the definition
of $c$-Gibbs $u$-state
$$
\mu^u_{P(x),i,x} = \mu^u_{i,x} = \nu^u_{i,x} \text{ for $\mu_i$-almost every $x\in\cM_i$.}
$$
By \eqref{eq.desintegra2}, this implies that
$$
\mu^u_{P,i,x} = \nu^u_{i,x} \text{ for $\mu_{P,i}$-almost every $x$ and $\hat\mu$-almost every $P\in\cP_i$.}
$$
Since $i$ is arbitrary, this just means that $\mu_P$ is a $c$-Gibbs $u$-state for $\hat\mu$-almost every $P\in\cP$.
\end{proof}

\begin{corollary}\label{c.measurezero}
If $\mu$ is a $c$-Gibbs $u$-state of $f$ then $\mu(\partial\cM)=0$ and
$\mu$ is a measure of maximal $u$-entropy.
\end{corollary}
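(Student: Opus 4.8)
The plan is to prove the statement first for ergodic $\mu$ and then pass to the general case, and for ergodic $\mu$ to run the argument behind Corollary~\ref{c.projection_mu} in reverse. The passage to the general case is cheap: by Lemma~\ref{l.Gibbs_decomposition} almost every ergodic component $\mu_P$ of a $c$-Gibbs $u$-state $\mu$ is again a $c$-Gibbs $u$-state, so once we know $\mu_P(\partial\cM)=0$ and $h_{\mu_P}(f,\cF^{uu})=h(f,\cF^{uu})$ for those, integrating over the ergodic decomposition and using that the $u$-entropy is affine (Remark~\ref{r.affine}) gives $\mu(\partial\cM)=0$ and $h_\mu(f,\cF^{uu})=h(f,\cF^{uu})$. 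From now on I assume $\mu$ ergodic, and I set $w=\pi_*\mu$.

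The stable part of the boundary is disposed of directly. Disintegrating $\mu\mid_{\cM_i}$ along the strong-unstable plaques, the defining property of a $c$-Gibbs $u$-state says the conditional measures are the reference measures $\nu^u_{i,x}$; by Remark~\ref{r.zeroboundary} each $\nu^u_{i,x}$ gives zero mass to $\partial^s\cM\cap\xi^u_i(x)$, hence $\mu(\partial^s\cM_i)=0$ for every $i$ and so $\mu(\partial^s\cM)=0$. In particular $w(\partial^s\cR)=0$, so $w$-almost every point lies in the relative interior, within its unstable leaf, of an unstable plaque of some $\cR_i$.

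The heart of the argument is the identification $w=\Leb$. Since $\pi$ sends each strong-unstable plaque $\xi^u_i(x)$ homeomorphically onto the unstable plaque $W^u_i(\pi(x))$ of $A$ and carries $\nu^u_{i,x}$ to the normalized Lebesgue measure of that plaque, pushing the disintegration of $\mu$ along strong-unstable plaques forward under $\pi$ shows that, over each $\cR_i$, the conditional measures of $w$ along the unstable plaques are normalized Lebesgue measures. (Two points need care: $\pi$ is only leafwise injective, but all the center-stable slices sitting over a given unstable plaque contribute measures proportional to Lebesgue, so the push-forward stays Lebesgue; and by the previous paragraph $w$-a.e.\ point is interior to its unstable plaque, so these are genuinely the conditionals of $w$ along local unstable manifolds.) Thus $w$ is a Gibbs $u$-state of $A$ in the classical sense. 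By Ledrappier's entropy characterization of such measures, and using that the unstable Jacobian of the linear map $A$ is the constant $e^{h(A)}$, we get $h_w(A,\cW^u)=h(A)$; combined with $h_w(A)\ge h_w(A,\cW^u)$ (the $A$-analogue of \eqref{eq.two_entropies}) and the variational principle, $w$ is a measure of maximal entropy for $A$, hence $w=\Leb$ by uniqueness of the latter. Consequently $\mu(\partial^u\cM)=\mu(\pi^{-1}(\partial^u\cR))=w(\partial^u\cR)=\Leb(\partial^u\cR)=0$, and with the second paragraph this gives $\mu(\partial\cM)=0$.

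It remains to prove $h_\mu(f,\cF^{uu})=h(f,\cF^{uu})$. Now that $\mu(\partial\cM)=0$ and $w=\Leb$ (so $w(\partial\cR)=0$), the families $\xi^u=\{\xi^u_i(x):x\in\Int\cM_i\}$ and $\eta^u=\{W^u_i(a):a\in\Int\cR_i\}$ are, modulo null sets, measurable partitions satisfying the hypotheses of Corollary~\ref{c.upperbelow}, exactly as in the final part of the proof of Corollary~\ref{c.projection_mu}; and the $c$-Gibbs property gives $\pi_*\mu^u_x=\pi_*\nu^u_{i,x}=\Leb^u_{i,\pi(x)}=w^u_{\pi(x)}$ for $\mu$-a.e.\ $x$. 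Corollary~\ref{c.upperbelow} then yields $h_\mu(f,\cF^{uu})=h_w(A,\cW^u)$, which equals $h(A)=h(f,\cF^{uu})$ by the previous paragraph and Lemma~\ref{l.topologicalentropy}. Hence $\mu$ is a measure of maximal $u$-entropy. The step I expect to be the main obstacle is the identification $w=\Leb$: it requires pushing a Rokhlin disintegration through a map that is only leafwise injective and only leafwise compatible with the Markov partition, and then invoking the classical rigidity --- Ledrappier's entropy formula together with the intrinsic ergodicity of the hyperbolic automorphism $A$ --- that forces a Gibbs $u$-state of $A$ to be the Haar measure. Everything after that simply reverses the reasoning in the proof of Corollary~\ref{c.projection_mu}.
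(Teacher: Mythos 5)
Your proposal is correct, and the overall architecture (reduce to the ergodic case, show $\pi_*\mu=\Leb$, then feed the $c$-Gibbs property into Corollary~\ref{c.upperbelow} and Lemma~\ref{l.topologicalentropy}) matches the paper's. But the key step --- the identification $\pi_*\mu=\Leb$ --- is argued quite differently. The paper proceeds dynamically: ergodicity plus the Birkhoff theorem and dominated convergence give $\frac1n\sum_{j<n} f^j_*\nu^u_{i,x}\to\mu$, hence $\frac1n\sum_{j<n} A^j_*\Leb_{i,\pi(x)}\to\pi_*\mu$, and the left-hand side is well known to equidistribute toward Haar measure for a linear Anosov automorphism; this is elementary and self-contained, and does not even need intrinsic ergodicity of $A$. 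You instead push the disintegration through $\pi$ to recognize $w=\pi_*\mu$ as a classical Gibbs $u$-state of $A$, invoke Ledrappier's entropy characterization (together with the fact that the unstable Jacobian of $A$ is the constant $e^{h(A)}$) to get $h_w(A,\cW^u)=h(A)$, and then use $h_w(A)\ge h_w(A,\cW^u)$ and uniqueness of the measure of maximal entropy. This imports more machinery but is conceptually illuminating: it makes explicit that $\pi$ transports $c$-Gibbs $u$-states of $f$ to Gibbs $u$-states of $A$, which is really the structural reason the corollary holds. Two details you handled well and that are genuinely needed for your route (but not for the paper's): disposing of $\partial^s\cM$ first via Remark~\ref{r.zeroboundary}, so that the unstable plaques form a measurable partition mod $w$-null sets before you disintegrate $w$; and observing that, although $\pi$ is only leafwise injective, all plaques over a given $W^u_i(a)$ push forward to the same normalized Lebesgue measure, so transitivity of disintegration still identifies the conditionals of $w$. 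Both proofs are valid; the paper's is shorter and avoids any appeal to SRB theory for $A$.
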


\begin{proof}
We begin by proving that $\mu(\partial\cM)=0$.
In view of Lemma~\ref{l.Gibbs_decomposition},
for this purpose it is no restriction to suppose that $\mu$ is ergodic.
Then
\begin{equation}\label{eq.ergodicity}
\frac{1}{n}\sum_{j=0}^{n-1}\delta_{f^j(y)} \to \mu
\end{equation}
for $\mu$-almost every $y$.
Consider any $i \in\{1, \dots, k\}$ such that the Markov element $\cM_{i}$ has
positive  $\mu$-measure. By the definition of $c$-Gibbs $u$-state, it follows
that \eqref{eq.ergodicity} holds for $\nu^u_{i,x}$-almost $y$ and $\mu$-almost
every $x\in\cM_{i}$. In particular, using the dominated convergence theorem,
$$
\frac 1n \sum_{j=0}^{n-1} f_*^j\nu_{i,x} \to \mu
$$
for $\mu$-almost every $x\in\cM_{i}$. By the definition of $c$-Gibbs $u$-state,
the image of each $\nu_{i,x}$ under the projection $\pi$ is the normalized
Lebesgue measure on $W^u_i(\pi(x))$, which we denote as $\Leb_{i,\pi(x)}$.
Thus, the previous relation implies that
$$
\frac 1n \sum_{j=0}^{n-1} A_*^j \Leb_{i,\pi(x)} \to \pi_*\mu.
$$
Now, it is well-known and easy to check that the left hand side converges to
the Lebesgue measure $\Leb$ on the torus $\TT^d$. So, we just proved that $\pi_*\mu=\Leb$.
As observed previously, the total boundary $\partial \cR$ has zero Lebesgue measure.
It follows that $\mu(\partial\cM)=0$, as claimed.

Let $\xi^u$ be the family of all $\xi^u_i(x)$ with $x$ in the interior of some $\cM_i$,
and, similarly, $\eta^u$ be the family of all $W^u_i(a)$ with $a$ in the interior of some
$\cR_i$. By the previous observation, these are partitions of (full measure subsets of)
$M$ and $\TT^d$, respectively. Moreover, $\eta^u$ is subordinate to the unstable foliation
of $A$, and $\xi^u$ is its pull-back under the map $\pi$ to each strong-unstable leaf
of $f$. By the definition of $c$-Gibbs $u$-state,
$$
\pi_*\mu_x^u = \Leb^u_{\pi(x)} = \text{ normalized Lebesgue measure on $\eta^u_{\pi(x)}$}
$$
for $\mu$-almost every $x$. Applying Corollary~\ref{c.upperbelow}, we conclude that
$h_\mu(f,\cF^{uu})=h_{\Leb}(A,\cW^u)$. Since $A$ is linear, we also have
that $h_{\Leb}(A,\cW^u)$ coincides with the topological $u$-entropy $h(A,\cW^u)$.
Moreover, by Lemma~\ref{l.topologicalentropy}, the two topological entropies
$h(A,\cW^u)$ and $h(f,\cF^{uu})$ are the same. Combining these identities, we
find that $h_\mu(f,\cF^{uu})=h(f,\cF^{uu})$. This means that $\mu$ is indeed
a measure of maximal $u$-entropy.
\end{proof}

At this point the proof of Theorem~\ref{main.maximal.measures} is complete.
Let us conclude this section with a few remarks on the broader significance of
Proposition~\ref{p.upperbelow} and Corollary~\ref{c.upperbelow}.

\begin{remark}\label{rk.invarainceprinciple}
The rigidity phenomenon stated in Corollary~\ref{c.upperbelow} is akin to some results
of Avila, Viana~\cite{Extremal} and Tahzibi, Yang~\cite{TaY19} for partially hyperbolic
diffeomorphisms $f$ whose center foliations $\cF^c$ are fiber bundles over the
quotient space $M/\cF^c$, as we are going to explain.
On the one hand, it follows from the invariance principle of Avila, Viana~\cite{Extremal}
that if the center Lyapunov exponents are non-positive then the disintegration of any
$f$-invariant probability measure $\mu$ along center leaves is preserved by strong-unstable
holonomies (\emph{$u$-invariance}).
On the other hand, Tahzibi, Yang~\cite{TaY19} proved that if
$h_\mu(f,\cF^{uu}_f)=h_\nu(g,\cF^{uu}_g)$, where $g$ is the quotient transformation induced
by $f$ on $M/\cF^c$ and $\nu$ is the projection of $\mu$, then the disintegration of $\mu$
along strong-unstable leaves is invariant under center holonomies (\emph{$c$-invariance},
see also \cite{AVW2}).
The two conclusions are the same, as $u$-invariance is known to be equivalent to $c$-invariance
(see Tahzibi, Yang~\cite[Lemma~5.2]{TaY19} and Viana, Yang~\cite[Proposition~6.2]{ViY19}).
The rigidity property in Corollary~\ref{c.upperbelow} is a broader kind of $c$-invariance,
with the fibers of $\pi$ taking the role of the center leaves.
\end{remark}

\section{$c$-Gibbs $u$-states}\label{s.Gibbsu}

The main purpose of this section is to prove Theorem~\ref{main.skeleton}.
To that end, we start by investigating some properties of the space of $c$-Gibbs $u$-states.
Then, we prove an alternative characterization of diffeomorphisms with $c$-mostly contracting center.
The proof of the theorem is given in the last subsection.

\subsection{Space of $c$-Gibbs $u$-states}\label{s.gibbs}

The next proposition collects some basic properties of the space $\Gibbs^u_c(f)$ of $c$-Gibbs
$u$-states of $f$:

\begin{proposition}\label{p.gibbs}
$\Gibbs^u_c(f)$  is non-empty, convex, and compact. Moreover,
\begin{enumerate}
\item almost every ergodic component of any $\mu \in \Gibbs^u_\nu(f)$
is a $c$-Gibbs $u$-state;
\item the support of every $\mu \in \Gibbs^u_\nu(f)$ is $u$-saturated;
\item for every $x \in \cM_i$ and $l \in\{1, \dots, k\}$,
every accumulation point of the sequence
$$
\mu_n = \frac{1}{n}\sum_{j=0}^{n-1} f^j_* \nu^u_{l,x}
$$
is a $c$-Gibbs $u$-state.
\end{enumerate}
\end{proposition}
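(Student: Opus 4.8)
The plan is to establish the three structural properties (non-emptiness, convexity, compactness) together with the three enumerated items, reducing everything to the defining property of $c$-Gibbs $u$-states as measures whose conditional probabilities along the strong-unstable plaques $\xi^u_i(x)$ coincide with the reference measures $\nu^u_{i,x}$. First I would treat item~(3), since it immediately yields non-emptiness. Fix $x \in \cM_l$ and consider $\mu_n = \frac1n\sum_{j=0}^{n-1} f^j_*\nu^u_{l,x}$. By compactness of the space of probability measures, there is an accumulation point $\mu_\infty$, which is automatically $f$-invariant by the usual Krylov--Bogolyubov argument. The heart of the matter is to show $\mu_\infty$ is a $c$-Gibbs $u$-state, i.e.\ that its disintegration along $\xi^u$ is the family $\nu^u_{i,x}$; this is exactly where the structural properties of the Markov partition $\cM$ enter. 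The key point is that each $f^j_*\nu^u_{l,x}$, restricted to any Markov element $\cM_i$, disintegrates along $\xi^u_i$ into pieces of the reference measures: this follows from the constant-Jacobian property \eqref{eq_constant_Jacobian} together with the fact (Remark~\ref{r.zeroboundary}) that $\partial^s\cM$ meets each plaque in a $\nu^u$-null set, so that iterating $f$ only subdivides plaques into sub-plaques carrying (renormalized) reference measure. Passing to the Cesàro average preserves this leafwise structure, and then one must argue that the property of ``having reference measures as conditionals along $\xi^u$'' is closed under weak-$*$ limits — this requires care because the partition $\xi^u$ is non-trivial, so one should phrase it via test functions that are constant on center-stable plaques (using the center-stable holonomy invariance \eqref{eq_cs-invariant}) and invoke the continuity of $x\mapsto\nu^u_{i,x}$ in the appropriate sense. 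I expect this closedness-under-limits step to be the main obstacle, and the cleanest route is probably to characterize $c$-Gibbs $u$-states by an integral identity $\int \varphi\,d\mu = \int\!\!\int \varphi\,d\nu^u_{i,x}\,d\mu(x)$ over a suitable class of continuous $\varphi$, which manifestly passes to limits.

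Once item~(3) is in hand, non-emptiness of $\Gibbs^u_c(f)$ is immediate (take any accumulation point of any such $\mu_n$). Convexity is straightforward from the uniqueness and linearity of Rokhlin disintegration: if $\mu = t\mu_1 + (1-t)\mu_2$ with $\mu_1,\mu_2$ both $c$-Gibbs, then restricting to $\cM_i$ and disintegrating along $\xi^u_i$, the conditionals of $\mu_i$ are the corresponding convex combinations of those of $\mu_{1,i}$ and $\mu_{2,i}$, hence equal $\nu^u_{i,x}$ (a convex combination of a measure with itself). Compactness follows because $\Gibbs^u_c(f)$ is a closed subset of the compact set of $f$-invariant probability measures: closedness is again the integral-identity characterization established above, applied now to arbitrary weak-$*$ limits of $c$-Gibbs $u$-states rather than just to the specific Cesàro sequences.

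For item~(1), almost every ergodic component being a $c$-Gibbs $u$-state, I would simply invoke Lemma~\ref{l.Gibbs_decomposition}, which is precisely this statement (note the proposition writes $\Gibbs^u_\nu(f)$, presumably a typo for $\Gibbs^u_c(f)$). For item~(2), that $\supp\mu$ is $u$-saturated: since the conditional measures of $\mu$ along the strong-unstable plaques are the reference measures $\nu^u_{i,x}$, and each $\nu^u_{i,x}$ has full support in the plaque $\xi^u_i(x)$ (being the $\pi$-pullback of normalized Lebesgue measure on $W^u_i(\pi(x))$, which has full support), it follows that if $z \in \supp\mu$ then the entire plaque $\xi^u_i(z)$ lies in $\supp\mu$ for the relevant $i$; chaining these local plaques along a strong-unstable leaf and using $f$-invariance of $\supp\mu$ together with the $u$-generating property \eqref{eq_u-generating} shows the whole leaf $\cF^{uu}(z)$ is contained in $\supp\mu$. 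The only mild technical point here is handling points on $\partial\cM$, which is harmless since $\mu(\partial\cM)=0$ (established in Corollary~\ref{c.measurezero}), so $\mu$-a.e.\ point is interior to some $\cM_i$ and the plaque structure is well-defined there. Assembling these pieces completes the proof.
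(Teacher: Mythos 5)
Your proposal is essentially sound, but it takes a genuinely different route to the structural properties and has one soft spot. For non-emptiness, convexity and compactness the paper does not argue intrinsically at all: it invokes Theorem~\ref{main.maximal.measures} (proved in the preceding section), which identifies $\Gibbs^u_c(f)$ with the set $\MM^u(f)$ of measures of maximal $u$-entropy, and then quotes Hu--Wu--Zhu for the fact that the latter is non-empty, convex and compact. Your direct approach --- convexity from linearity of disintegration, compactness from a closedness-under-weak-$*$-limits argument phrased via the integral identity $\int\varphi\,d\mu=\int\bigl(\int\varphi\,d\nu^u_{i,x}\bigr)d\mu(x)$ --- is more self-contained and would work: the inner integral is continuous on the interior of each $\cM_i$, and the boundary is harmless because $\pi_*$ of any $c$-Gibbs $u$-state is Lebesgue, so $\mu(\partial\cM)=0$ persists in the limit. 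But you correctly identify this closedness step as the main technical burden, and it is precisely the burden the paper avoids by routing through Theorem~\ref{main.maximal.measures}. Your treatments of items (1) and (3) match the paper's: (1) is exactly Lemma~\ref{l.Gibbs_decomposition}, and for (3) the paper uses the same ingredients you name --- the constant-Jacobian property \eqref{eq_constant_Jacobian} to see that each $f^j_*\nu^u_{l,x}$ restricted to $\cM_i$ is a linear combination of reference measures, the $cs$-holonomy invariance \eqref{eq_cs-invariant} to identify them all, and the fact that $\pi_*\mu_n$ converges to Lebesgue so the limit gives no mass to $\partial\cM$.

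The soft spot is item (2). Knowing that $\supp\mu$ restricted to each $\cM_i$ is a union of complete plaques $\xi^u_i(x)$ does not by itself saturate whole leaves: a strong-unstable leaf is unbounded, and ``chaining'' plaques only propagates the conclusion to plaques that already contain points of $\supp\mu$. The $u$-generating property \eqref{eq_u-generating} is the wrong tool here --- it concerns backward iteration shrinking plaques to points. The paper's actual mechanism is Poincar\'e recurrence: for $\mu$-almost every $x$ in a set $Y_\delta$ of points whose plaque lies in $\supp\mu$ with boundary $\delta$-far from $x$, there are infinitely many $n$ with $f^{-n}(x)\in Y_\delta$, and pushing the plaques $\xi^u(f^{-n}(x))$ forward by $f^n$ produces arbitrarily large pieces of $\cF^{uu}(x)$ inside the (closed, invariant) support; letting $\delta\to 0$ then covers $\mu$-almost every leaf. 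You should replace the chaining argument with this recurrence argument.
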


\begin{proof}
We know from Theorem~\ref{main.maximal.measures} that  $\Gibbs^u_c(f)$ coincides with the space
$\MM^u(f)$ of measures of maximal $u$-entropy. It has been proven by Hu, Wu, Zhu~\cite{HWZ}
that the latter is non-empty, convex, and compact. The first statement in the proposition
follows directly. Claim (1) is given by Lemma~\ref{l.Gibbs_decomposition} above.

To prove claim (2), recall that by definition $\pi_*\mu^u_{i,x} = \Leb^u_{i,\pi(x)}$ for
$\mu$-almost every $x$. Since the restriction of $\pi$ to each strong-unstable leaf is a
homeomorphism, and $\Leb^i_{i,x}$ is supported on the whole unstable plaque $W^u_i(\pi(x))$,
we conclude that $\mu^u_{i,x}$ is supported on the whole $\xi^u_i(x)$ for $\mu$-almost
every $x$. This implies that the support of $\mu$ restricted to each $\mu_i$ is a union
of complete strong-unstable plaques $\xi^u_i(x)$.
We know from Corollary~\ref{c.measurezero} that $\mu(\partial\cM)=0$.
Let $Y_\delta$ be the set of points $x$ such that, for some $i\in\{1, \dots, k\}$, the
plaque $\xi^u_i(x)$ is contained in $\supp\mu$, and its boundary is $\delta$-away from $x$.
It follows that $\mu(Y_\delta)\to 1$ as $\delta\to 0$.
By Poincar\'e recurrence, for $\mu$-almost every $x \in Y_\delta$ there are infinitely
many values of $n \ge 1$ such that $f^{-n}(x) \in Y_\delta$. Thus, iterating the
plaques $\xi^u(f^{-n}(x))$ forward, we conclude that the whole strong-unstable leave $x$
is contained in the support of $\mu$. Next, making $\delta\to 0$ we get that this conclusion
holds for $\mu$-almost every $x$. Thus the support of $\mu$ is $u$-saturated, as claimed.

Now we prove claim (3). Begin by noticing that
$$
\pi_*\mu_n = \frac 1n \sum_{j=0}^{n-1} A^j_*\Leb^u_{i,\pi(x)}
$$
converges to the Lebesgue measure of $\TT^d$, since $A$ is a linear Anosov diffeomorphism.
Thus, $\pi_*\mu$ is the Lebesgue measure. In particular, $\mu(\partial\cM)=0$.
Property \eqref{eq_constant_Jacobian} ensures that the restriction of each iterate
$f^j_*\nu^u_{l,x}$ to every $\cM_i$ is a linear combination of reference measures.
Then the same is true for each $\mu_n \mid_{\cM_i}$.
Property \eqref{eq_cs-invariant} means that all these reference measures inside each $\cM_i$
are the same up to $cs$-holonomy.
It follows that for any accumulation point $\hat\mu_i$ of the sequence $\mu_n \mid_{\cM_i}$,
its conditional probabilities are also given by the reference measure.
The fact that $\mu(\partial\cM_i)=0$ also implies that $\hat\mu_i$ coincides with the
restriction of $\mu$ to $\cM_i$. This proves the claim.
\end{proof}

\subsection{Maps with $\nu$-mostly contracting center}

In the remainder of this section, the diffeomorphism $f:M\to M$ is assumed to be as in
Theorem~\ref{main}. First, we prove:

\begin{proposition}\label{p.contracting}
$f$ has $c$-mostly contracting center if and only if all center-stable Lyapunov exponents
of every ergodic $c$-Gibbs $u$-state of $f$ are negative.
\end{proposition}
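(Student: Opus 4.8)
The plan is to recast ``all center-stable Lyapunov exponents negative'' as a statement about a single bounded $f$-invariant function and then confront it with the $c$-Gibbs property. Put $\phi_m(y)=\tfrac1m\log\|Df^m\mid_{E^{cs}_y}\|$, which is continuous and bounded, and let
\[
\psi(y)=\limsup_n\tfrac1n\log\|Df^n\mid_{E^{cs}_y}\|.
\]
Since $y\mapsto\log\|Df^n\mid_{E^{cs}_y}\|$ is a subadditive cocycle, Kingman's theorem shows that $\psi$ is $f$-invariant, that $\lim_n\tfrac1n\log\|Df^n\mid_{E^{cs}_y}\|$ exists and equals $\psi(y)$ for $\mu$-almost every $y$ whenever $\mu$ is $f$-invariant, and that $\int\psi\,d\mu=\inf_m\int\phi_m\,d\mu=\lim_m\int\phi_m\,d\mu=:\Lambda(\mu)$, this number being the largest center-stable Lyapunov exponent of $\mu$ when $\mu$ is ergodic. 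Hence an ergodic $\mu$ has all its center-stable exponents negative if and only if $\psi<0$ $\mu$-a.e., and the functional $\Lambda$ is affine (it is the integral of the fixed function $\psi$) and upper semicontinuous on the set of $f$-invariant measures, being an infimum of the continuous functionals $\mu\mapsto\int\phi_m\,d\mu$.

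For the direct implication, assume $f$ has $c$-mostly contracting center and let $\mu$ be an ergodic $c$-Gibbs $u$-state; I must show $\psi<0$ $\mu$-a.e. Since $\mu$ is ergodic, $\psi$ equals the constant $\Lambda(\mu)$ $\mu$-a.e.; if this constant were $\ge0$ then $\mu(\{\psi<0\})=0$. Choose $i$ with $\mu(\cM_i)>0$; by the defining property of $c$-Gibbs $u$-states the disintegration of $\mu\mid_{\cM_i}$ along the strong-unstable plaques $\xi^u_i$ is $\{\nu^u_{i,x}\}$, so $0=\mu(\{\psi<0\}\cap\cM_i)=\int\nu^u_{i,x}(\{\psi<0\})\,d\bar\mu_i(x)$ for the associated quotient measure $\bar\mu_i$. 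As $\bar\mu_i\neq0$, there is $x\in\cM_i$ with $\nu^u_{i,x}(\{\psi<0\})=0$, contradicting $c$-mostly contracting center. Thus every ergodic $c$-Gibbs $u$-state has negative center-stable exponents.

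Conversely, assume every ergodic $c$-Gibbs $u$-state has negative center-stable exponents. By Lemma~\ref{l.Gibbs_decomposition} every $\mu\in\Gibbs^u_c(f)$ satisfies $\Lambda(\mu)=\int\Lambda(\mu_P)\,d\hat\mu(P)<0$, since $\hat\mu$-a.e.\ $\mu_P$ is an ergodic $c$-Gibbs $u$-state; by upper semicontinuity of $\Lambda$ on the compact set $\Gibbs^u_c(f)$ there is $a>0$ with $\Lambda(\mu)\le-a$ there. Using that $\Lambda(\mu)=\inf_m\int\phi_m\,d\mu$, that $\int\phi_{km}\,d\mu\le\int\phi_m\,d\mu$, and that each $\mu\mapsto\int\phi_m\,d\mu$ is continuous, a finite-cover argument produces a single $m_*\in\NN$ with $\int\phi_{m_*}\,d\mu\le-a/2$ for all $\mu\in\Gibbs^u_c(f)$. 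Now fix $i$ and $x\in\cM_i$, write $\nu=\nu^u_{i,x}$ and $\mu_n=\tfrac1n\sum_{j=0}^{n-1}f^j_*\nu$. By Proposition~\ref{p.gibbs}(3) every accumulation point of $(\mu_n)$ lies in $\Gibbs^u_c(f)$, so by continuity $\int\phi_{m_*}\,d\mu_n\le-a/3$ for all large $n$; unwinding, $\int\bigl(\tfrac1n\sum_{j=0}^{n-1}\phi_{m_*}(f^jy)\bigr)\,d\nu(y)\le-a/3$. As $\phi_{m_*}$ is bounded, an elementary Chebyshev-type estimate gives a constant $p_0>0$, independent of $n$, with $\nu(B_n)\ge p_0$, where $B_n=\{y:\tfrac1n\sum_{j=0}^{n-1}\phi_{m_*}(f^jy)<-a/6\}$; consequently $\nu(\limsup_nB_n)\ge p_0$.

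It remains to show that $\psi(y)<0$ for $\nu$-almost every $y\in\limsup_nB_n$, and this is the crux of the proof. For such $y$ the ergodic averages of $\phi_{m_*}$ along the orbit of $y$ are $<-c$ (with $c=a/6$) for infinitely many times; passing to the subadditive cocycle $\log\|Df^{m_*}\mid_{E^{cs}}\|$ under $f^{m_*}$, which is the form to which Pliss's lemma applies cleanly, one obtains along the orbit a positive density of hyperbolic times, and the standard compactness argument from the theory of mostly contracting center (as in~\cite{BoV00,ANd10}, transported to the reference measures through the constant-Jacobian property~\eqref{eq_constant_Jacobian}) then exhibits a point $z$ in the forward orbit of $y$ at which $E^{cs}$ is uniformly exponentially contracted under all forward iterates, so that $\psi(z)<0$ and hence, by $f$-invariance, $\psi(y)<0$. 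Therefore $\nu^u_{i,x}(\{\psi<0\})\ge p_0>0$ for every $i$ and every $x\in\cM_i$, which is precisely the $c$-mostly contracting condition. The genuine obstacle is exactly this last bootstrapping --- upgrading ``Birkhoff sums of the center-stable cocycle negative infinitely often'' to ``uniform asymptotic contraction of $E^{cs}$ on a set of positive reference measure'' --- whereas everything preceding it is soft.
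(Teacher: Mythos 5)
Your ``only if'' direction is correct and is essentially the paper's own argument (disintegrate $\mu\mid_{\cM_i}$ along the plaques and use ergodicity). The soft part of your converse is also fine and in fact reproduces the content of the paper's Lemma~\ref{l.newcriterion}: upper semicontinuity of $\Lambda=\inf_m\int\phi_m\,d\mu$ on the compact convex set $\Gibbs^u_c(f)$, together with Lemma~\ref{l.Gibbs_decomposition}, gives a single $m_*$ and $a>0$ with $\int\phi_{m_*}\,d\mu\le -a/2$ on all of $\Gibbs^u_c(f)$; Proposition~\ref{p.gibbs}(3) transports this to the Ces\`aro averages $\mu_n$ of $\nu=\nu^u_{i,x}$; Chebyshev then gives $\nu(B_n)\ge p_0$. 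But the step you yourself call the crux is genuinely gapped, and it is exactly where the paper's proof does something different.

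The difficulty is that $\nu(\limsup_n B_n)\ge p_0$ only tells you that, for a positive $\nu$-measure set of $y$, the averages $\frac1n\sum_{j<n}\phi_{m_*}(f^jy)$ drop below $-a/6$ for \emph{infinitely many} $n$, i.e.\ their $\liminf$ is negative. That does not yield an eternal Pliss time on the forward orbit of $y$: setting $S(n)=\sum_{j<n}\bigl(\phi_{m_*}(f^jy)+c\bigr)$, eternal Pliss times are the right-records of $S$, which are guaranteed when $S(n)\to-\infty$ (that is, when the $\limsup$ of the averages is $<-c$) but need not exist when only the $\liminf$ is controlled --- a bounded sequence with long alternating negative and positive blocks has infinitely many good horizons and no eternal Pliss time. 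The compactness argument you invoke does not repair this: weak$^*$ limits of empirical measures concentrated on finite-horizon Pliss times produce eternally contracted points only in the \emph{closure} of the orbit (equivalently, in the support of a limit $c$-Gibbs $u$-state), not on the orbit of $y$, so one gets $\psi(z)<0$ for some $z$ in that closure, which does not imply $\psi(y)<0$. Nor can the integral bounds alone force the $\limsup$ of the averages to be negative $\nu$-positively often, since $\nu$ is not invariant and Fatou only controls the $\liminf$. What closes the gap --- and what the paper actually does --- is an absorption argument: the limit measure $\mu$ has all center-stable exponents negative, so by \cite[Theorem~3.11]{ABC05} (the $C^1$ case with domination) a positive $\nu^u_{j,z}$-measure set $\Gamma$ in some plaque of $\supp\mu$ carries Pesin local stable manifolds of uniform size; the stable holonomy coincides with the center-stable holonomy and preserves the reference measures by \eqref{eq_cs-invariant}; and the constant-Jacobian property \eqref{eq_constant_Jacobian} then shows that a positive $\nu^u_{i,x}$-measure set in the original plaque is eventually mapped into the stable manifolds through $\Gamma$, where $\psi<0$ is inherited. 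Replacing your last paragraph by this argument makes the proof correct.
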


\begin{proof}
The 'only if' statement is easy. Indeed, suppose that $f$ has $c$-mostly contracting center,
and let $\mu$ be any ergodic $c$-Gibbs $u$-state.
The definition of $c$-mostly contracting center implies that the center-stable Lyapunov exponents
are negative on positive $\nu^u_x$-measure subsets of every strong-unstable manifold.
Since the conditional probabilities $\mu^u_{i,x}$ of $\mu$ are given by the reference measures,
it follows that the center-stable Lyapunov exponents are negative on a positive $\mu$-measure subset
and, thus, $\mu$-almost everywhere.

Now suppose that the center-stable Lyapunov exponents of every ergodic $c$-Gibbs $u$-state are
negative. Take any $i$ and any $x \in \cM_i$.
By part (3) of Proposition~\ref{p.gibbs}, there exists a sequence $(n_m)_m\to\infty$ such that
$$
\mu_{n_m} = \frac{1}{n_m}\sum_{0}^{n_m-1} f^l_*\nu^u_{i,x}
$$
converges to a $c$-Gibbs $u$-state $\mu$. Since the ergodic components of $\mu$ are also
$c$-Gibbs $u$-states, our assumptions imply that the center-stable Lyapunov exponents are negative at
$\mu$-almost point. Then it follows from~\cite[Theorem~3.11]{ABC05} that $\mu$-almost every
point has a Pesin local stable manifold with dimension equal to $\dim E^{cs}$.

We also know from Corollary~\ref{c.measurezero} that $\mu(\partial\cM)=0$.
So, keeping in mind that the conditional probabilities of $\mu$ with respect to the partition
$\xi^u_j$ on each $\cM_j$ coincide with the reference measures,
we can find $j$, $z \in \supp\mu$ in the interior of $\cM_j$, $\epsilon>0$,
and a positive $\nu^u_{j,z}$-measure set $\Gamma\subset\xi^u_{j,z}$ such that every
$y \in \Gamma$ has a Pesin local stable manifold of size greater than $\epsilon$.
Observe that the holonomy of the Pesin stable lamination coincides with the center-stable holonomy,
and so it preserves the reference measures.

Property \eqref{eq_constant_Jacobian} ensures that the restriction of $\mu_{n_m}$ to $\cM_j$
is a linear combination of reference measures.
Since $z$ is in the support of $\mu=\lim_k \mu_{n_m}$ this linear combination is not void,
indeed there are iterates of $\xi_{i,x}^u$ close to $\xi^u_{j,z}$.
In particular, recalling property~\eqref{eq_cs-invariant} we see that for every $k$ sufficiently
large there exists a positive $\nu^u_{i,x}$-measure subset of points $y\in\xi^u_{i,x}$
such that $f^l(w)$ belongs to the Pesin local stable manifold of some point in $\Gamma$
for some $l < n_m$. This implies that
$$
\lim\frac{1}{n} \log \|Df^n \mid _{E^{cs}_w}\|<0.
$$
Since $i$ and $x$ are arbitrary, this proves that $f$ has $c$-mostly contracting center.
\end{proof}

\begin{corollary}\label{c.basin}
If $f$ has $c$-mostly contracting center then every $\xi^u_{i,x}$, $i\in\{1, \dots, k\}$
and $x \in \cM_i$,
has a positive $\nu^u_{i,x}$-measure subset of points contained in the basin of some ergodic
$c$-Gibbs $u$-state.
\end{corollary}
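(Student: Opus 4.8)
The plan is to combine Proposition~\ref{p.contracting} with part~(3) of Proposition~\ref{p.gibbs}, using the structure of Pesin stable manifolds to propagate contraction to a definite-measure set of points. Fix $i$ and $x \in \cM_i$, and consider a subsequence along which $\mu_n = \frac 1n \sum_{j=0}^{n-1} f^j_* \nu^u_{i,x}$ converges to a $c$-Gibbs $u$-state $\mu$. Passing to an ergodic component, I may assume $\mu$ is ergodic; by Proposition~\ref{p.contracting} its center-stable Lyapunov exponents are all negative, so by~\cite[Theorem~3.11]{ABC05} $\mu$-almost every point has a Pesin local stable manifold of dimension $\dim E^{cs}$. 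As in the proof of Proposition~\ref{p.contracting}, one fixes $j$, a point $z \in \supp\mu$ in the interior of $\cM_j$, a number $\epsilon>0$, and a set $\Gamma \subset \xi^u_{j,z}$ of positive $\nu^u_{j,z}$-measure all of whose points carry a Pesin local stable manifold of size at least $\epsilon$. Because the holonomy of the Pesin stable lamination agrees with the center-stable holonomy, which preserves the reference measures, this set $\Gamma$ may be transported along $cs$-holonomies to any other $\xi^u_{j,z'}$ with $z' \in \cM_j$, still of positive reference measure.

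Next I would exploit the Markov structure. Since $z \in \supp\mu$ and $\mu = \lim_m \mu_{n_m}$, the linear combination of reference measures that gives $\mu_{n_m}\mid_{\cM_j}$ (via \eqref{eq_constant_Jacobian}) is nontrivial for large $m$; hence there are iterates $f^l(\xi^u_{i,x})$, $l < n_m$, whose images meet the interior of $\cM_j$ in plaques that $\nu$-accumulate near $\xi^u_{j,z}$. Using \eqref{eq_cs-invariant} and \eqref{eq_constant_Jacobian}, the push-forward $f^l_*\nu^u_{i,x}$ restricted to such a plaque is (a constant multiple of) the reference measure on that plaque, so a definite fraction of its mass lands in the $\epsilon$-neighborhood of the transported copy of $\Gamma$. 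Pulling this back by $f^{-l}$, and using that $f^{-l}$ contracts the reference measure by the same constant Jacobian factor, I obtain a positive $\nu^u_{i,x}$-measure set of points $y \in \xi^u_{i,x}$ such that $f^l(y)$ lies in the Pesin local stable manifold of some point of $\Gamma$, hence of some point whose forward orbit equidistributes to an ergodic $c$-Gibbs $u$-state. Any such $y$ satisfies $\limsup_n \frac 1n \log\|Df^n\mid_{E^{cs}_y}\| < 0$ and, being forward-asymptotic to a generic point of that ergodic measure, lies in its basin.

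One technical point to handle carefully is that the Pesin stable manifolds vary only measurably, so the set $\Gamma$ must be chosen inside a Pesin block where the local stable manifolds have uniform size and depend continuously on the base point; this is exactly what \cite[Theorem~3.11]{ABC05} provides, and it is what makes the $\epsilon$-neighborhood argument work for \emph{all} sufficiently large $m$ simultaneously. A second point: the basin of an ergodic measure is defined by Birkhoff convergence of the orbit, and a point $y$ with $f^l(y)$ on the stable manifold of a generic point $y'$ has $d(f^{n}(y), f^n(y')) \to 0$, so the Birkhoff averages of $y$ and $y'$ have the same limit; thus $y$ indeed belongs to the basin. I expect the main obstacle to be bookkeeping the reference-measure mass through the iterates $f^l$ — matching plaques of $f^l(\xi^u_{i,x})$ with plaques inside $\cM_j$ and keeping the measure estimates uniform — rather than any conceptual difficulty; the Pesin theory and the equidistribution are already in place from the proof of Proposition~\ref{p.contracting}.
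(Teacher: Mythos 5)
Your proposal is correct and follows essentially the same route as the paper: rerun the second half of the proof of Proposition~\ref{p.contracting}, but choose the positive-measure set $\Gamma$ (with uniformly sized Pesin local stable manifolds) inside the basin of an ergodic component $\tilde\mu$ of the limit $c$-Gibbs $u$-state, then use the Markov structure, the constant Jacobian \eqref{eq_constant_Jacobian} and the $cs$-holonomy invariance \eqref{eq_cs-invariant} to push a positive $\nu^u_{i,x}$-fraction of $\xi^u_{i,x}$ into those stable manifolds, and conclude via asymptotic Birkhoff averages. The paper's proof is just a two-line reference back to that argument, so your write-up matches it in both idea and level of detail.
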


\begin{proof}
In the second part of the proof of Proposition~\ref{p.contracting}, consider an ergodic component $\tilde\mu$ of $\mu$ which is a $c$-Gibbs $u$-state (recall part (1) of Proposition~\ref{p.gibbs}),
we can take $\Gamma$ to  consist of points $y$ in the \emph{basin} of $\tilde\mu$, that is,
such that
$$
\frac 1n \sum_{l=0}^{n_m-1} \delta_{f^l(y)} \to \tilde\mu.
$$
Then, clearly, the Pesin local stable manifold $W^s(y)$ of every $y\in\Gamma$ is contained in
the basin of $\tilde\mu$. Thus, the argument in the proof of the proposition also proves that
for every $k$ sufficiently large there exists a positive $\nu^u_{i,x}$-measure subset of points
$y\in\xi^u_{i,x}$ which are in the basin of $\tilde\mu$.
\end{proof}

\subsection{Proof of Theorem~\ref{main.skeleton}}

This will consist of several auxiliary lemmas:

\begin{lemma}\label{l.finitegibbs}
There are finitely many ergodic $c$-Gibbs $u$-states for $f$.
\end{lemma}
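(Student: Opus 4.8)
The plan is to argue by contradiction, using the fact that ergodic $c$-Gibbs $u$-states carry Pesin stable manifolds of full center-stable dimension to manufacture, for each such state, a ``physical region'' that is seen by every nearby strong-unstable plaque.

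First I would collect the standing input. Since $f$ is as in Theorem~\ref{main}, Proposition~\ref{p.contracting} tells us that every ergodic $c$-Gibbs $u$-state $\mu$ has all its center-stable Lyapunov exponents negative; hence (see the proof of Proposition~\ref{p.contracting}) $\mu$-almost every point carries a Pesin local stable manifold of dimension $\dim E^{cs}$. Moreover $\mu(\partial\cM)=0$ by Corollary~\ref{c.measurezero}, and $\supp\mu$ is $u$-saturated by Proposition~\ref{p.gibbs}. Arguing as in the proof of Corollary~\ref{c.basin}, I would then extract from $\mu$ some $\epsilon>0$, an index $j$, and a point $z\in\supp\mu\cap\Int\cM_j$ that is a density point, inside the plaque $\xi^u_j(z)$, of a set $\Gamma\subset\xi^u_j(z)$ with $\nu^u_{j,z}(\Gamma)>0$, every point of which is $\mu$-generic and carries a Pesin local stable manifold of size at least $\epsilon$. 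Since points on a common Pesin stable manifold have the same Birkhoff averages, the saturation $\hat\Gamma=\bigcup_{y\in\Gamma}W^s_\epsilon(y)$ is contained in $\basin(\mu)$; and $\hat\Gamma\subset\supp\mu$ because $\supp\mu$ is $u$-saturated and $z\in\supp\mu$.

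Next I would promote this to a \emph{physical region}: a small flow box $V\subset\Int\cM_j$ around $z$ such that every strong-unstable plaque of $V$ meets $\basin(\mu)$ in a set of positive $\nu^u$-measure. The mechanism is that the holonomy of the Pesin stable lamination coincides with the center-stable holonomy $H^{cs}$ --- indeed, by unique integrability of $E^{cs}$ each $W^s_\loc$ lies inside a center-stable plaque --- and $H^{cs}$ preserves the reference measures by \eqref{eq_cs-invariant}. Thus, taking $\diam V$ small relative to $\epsilon$, the manifolds $W^s_\epsilon(y)$, $y\in\Gamma$, reach across $V$, and $H^{cs}$ identifies the positive-$\nu^u$-measure set $\Gamma\cap(\text{central plaque of }V)$ (nonnull because $z$ is a density point of $\Gamma$) with subsets of $\hat\Gamma\subset\basin(\mu)$ inside the other unstable plaques of $V$. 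Making the flow-box geometry precise here --- so that the stable manifolds do cross $V$ and so that $H^{cs}$ lands in the intended sub-plaques --- is the step I expect to demand the most care, although it is lighter than the usual absolute-continuity arguments because the relevant holonomy preserves the reference measures on the nose.

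Finally I would conclude. Assume, for contradiction, that there are infinitely many ergodic $c$-Gibbs $u$-states, and pick distinct ones $\mu_n$. Since $\Gibbs^u_c(f)$ is compact (Proposition~\ref{p.gibbs}), after passing to a subsequence $\mu_n\to\mu_\infty\in\Gibbs^u_c(f)$. Choose an ergodic component $\tilde\mu$ of $\mu_\infty$ that is again a $c$-Gibbs $u$-state (Lemma~\ref{l.Gibbs_decomposition}) and satisfies $\supp\tilde\mu\subseteq\supp\mu_\infty$ (both properties hold for almost every component), and build the physical region $V\ni z$ for $\tilde\mu$ as above, so $z\in\supp\tilde\mu\subseteq\supp\mu_\infty$. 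Then $\mu_\infty(V)>0$, hence $\mu_n(V)>0$ for all large $n$ by the portmanteau theorem. Fix such an $n$ with $\mu_n\ne\tilde\mu$ (all but at most one qualify). Ergodicity of $\mu_n$, the Gibbs property of $\mu_n$, and $\mu_n(\partial\cM)=0$ together give a point $x_n\in V$ for which $\nu^u$-almost every point of $\xi^u_j(x_n)$ is $\mu_n$-generic, so $\xi^u_j(x_n)\cap\basin(\mu_n)$ has full $\nu^u_{j,x_n}$-measure. But $x_n\in V$ forces $\xi^u_j(x_n)\cap\basin(\tilde\mu)$ to have positive $\nu^u_{j,x_n}$-measure. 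Since $\basin(\mu_n)$ and $\basin(\tilde\mu)$ are disjoint, this is a contradiction, proving the lemma.
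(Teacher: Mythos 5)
Your argument is correct and is essentially the paper's own proof: both rest on compactness of $\Gibbs^u_c(f)$, the positive reference-measure set of points with uniformly sized Pesin stable manifolds lying in the basin of an ergodic $c$-Gibbs $u$-state (Corollary~\ref{c.basin}), the fact that center-stable holonomy preserves the measures $\nu^u_{i,x}$ so that all nearby plaques see this basin with positive reference measure, and finally the Gibbs property plus ergodicity of the accumulating measures. The only differences are cosmetic: you frame it as a contradiction via disjointness of basins, where the paper concludes directly that $\mu_n=\tilde\mu$ for all large $n$.
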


\begin{proof}
Let $\mu_n$, $n\ge 1$ be any sequence of ergodic $c$-Gibbs $u$-states.
By compactness (Proposition~\ref{p.gibbs}), it is no  restriction to assume that $(\mu_n)_n$
converges to some $c$-Gibbs $u$-state $\mu$.
By Corollary~\ref{c.measurezero}, the boundary set $\partial\cM$ has zero $\mu$-measure.
Then we can find $x\in \sup\mu$ in the interior of some $\cM_i$.
By Corollary~\ref{c.basin}, there is a positive $\nu^u_{i,x}$-measure set $\Gamma_{i,x}$
formed by points which have Pesin local stable manifolds and belong to the basin of some
ergodic $c$-Gibbs $u$-state $\tilde\mu$.
Then all nearby plaques $\xi^u_{i,y}$ intersect the local stable manifolds through
$\Gamma_{i,x}$ on positive $\nu^u_{i,y}$-measure subsets. That is because the projections
along the local stable manifolds correspond to the center-stable holonomies of $f$,
as we observed previously, and the center-stable holonomies preserve the references measures,
according to \eqref{eq_cs-invariant}.
Since the $\mu_n$ converge to $\mu$, their supports must accumulate on $\xi^u_{i,x}$.
Keeping in mind that these are all $c$-Gibbs $u$-states, that is, that their conditional
measures relative to the partition $\xi^u_i$ are given by the reference measures,
it follows that these intersections have positive $\mu_n$-measure for every large $n$.
By ergodicity, this implies that $\mu_n=\tilde\mu$ for every large $n$.
Thus the family of ergodic $c$-Gibbs $u$-states is finite after all.
\end{proof}

\begin{lemma}\label{l.disjoint}
The supports of distinct ergodic $c$-Gibbs $u$-states are pairwise disjoint.
\end{lemma}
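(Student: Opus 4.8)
The plan is to argue by contradiction: suppose $\mu$ and $\mu'$ are distinct ergodic $c$-Gibbs $u$-states whose supports intersect at some point $p$. First I would use that, by Proposition~\ref{p.gibbs}(2), both $\supp\mu$ and $\supp\mu'$ are $u$-saturated, so they each contain the entire strong-unstable leaf $\cF^{uu}(p)$. Moreover, since $f$ has $c$-mostly contracting center, Proposition~\ref{p.contracting} tells us that all center-stable Lyapunov exponents of $\mu$ and of $\mu'$ are negative, so by \cite[Theorem~3.11]{ABC05} $\mu$-a.e.\ point (resp.\ $\mu'$-a.e.\ point) has a Pesin local stable manifold of dimension $\dim E^{cs}$, and $\mu$ and $\mu'$ are the SRB-like measures governing these stable manifolds: the basin of $\mu$ (resp.\ $\mu'$) contains the Pesin stable manifold through a.e.\ point of $\supp\mu$ (resp.\ $\supp\mu'$). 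This is exactly the kind of structure already exploited in the proof of Lemma~\ref{l.finitegibbs}.

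Next I would localize to a Markov element. Using $\mu(\partial\cM)=\mu'(\partial\cM)=0$ (Corollary~\ref{c.measurezero}), pick $i$ and points $x\in\supp\mu$, $x'\in\supp\mu'$ in the interior of $\cM_i$, both lying on strong-unstable plaques $\xi^u_{i,x}$, $\xi^u_{i,x'}$ that, by Corollary~\ref{c.basin} and its proof, carry positive $\nu^u$-measure sets $\Gamma$, $\Gamma'$ of points with Pesin local stable manifolds of uniform size $\epsilon$, lying in the basins of $\mu$, $\mu'$ respectively. The key geometric point is that the center-stable holonomy inside $\cM_i$ moves $\xi^u_{i,x}$ to $\xi^u_{i,x'}$ and, by \eqref{eq_cs-invariant}, carries $\nu^u_{i,x}$ to $\nu^u_{i,x'}$. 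Since the supports $\supp\mu$ and $\supp\mu'$ both meet the interior of $\cM_i$ and both are $u$-saturated, and since the Pesin stable manifolds through $\Gamma$ and $\Gamma'$ are long (size $\epsilon$), I would argue that some plaque $\xi^u_{i,z}$ lying in $\supp\mu\cap\cM_i$ has a positive-$\nu^u_{i,z}$-measure subset whose forward orbit is trapped in the Pesin stable manifolds through $\Gamma'$ — hence in the basin of $\mu'$ — exactly as in the trapping argument of Lemma~\ref{l.finitegibbs}; here one uses that $\supp\mu$, being a union of entire strong-unstable leaves that accumulate everywhere on a $u$-minimal piece, must come within distance $\epsilon$ of $\xi^u_{i,x'}$ along $\cM_i$.

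Finally, since $\mu$ is a $c$-Gibbs $u$-state, its conditional measure on $\xi^u_{i,z}$ is exactly $\nu^u_{i,z}$, so the set just produced has positive $\mu$-measure; by ergodicity of $\mu$, $\mu$-a.e.\ point lies in the basin of $\mu'$, forcing $\mu=\mu'$, a contradiction. The main obstacle I anticipate is the geometric step in the middle: making precise that the $u$-saturated support of $\mu$, restricted to $\cM_i$, necessarily has a plaque passing $\epsilon$-close (in the center-stable direction) to the plaque through $\Gamma'$, so that the center-stable holonomy can be applied to transfer positive $\nu^u$-measure from $\Gamma'$ into the conditionals of $\mu$. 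This requires knowing that every strong-unstable leaf in $\supp\mu$ is dense in a connected component of $\supp\mu$ and that such a component has nonempty interior transverse structure inside $\cM_i$ — facts that should follow from $u$-saturation plus Poincar\'e recurrence, along the lines already used for Proposition~\ref{p.gibbs}(2), but which need to be spelled out carefully.
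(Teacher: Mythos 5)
Your overall strategy (negative exponents, Pesin stable manifolds of uniform size, center-stable holonomy preserving the reference measures, ergodicity of the basin) is the right toolkit, but the way you deploy it has a genuine gap at exactly the step you flag as "the main obstacle", and that step cannot be repaired the way you suggest. You pick two \emph{separate} base points $x\in\supp\mu$ and $x'\in\supp\mu'$ in $\cM_i$ and then need $\supp\mu$ to come $\epsilon$-close, in the center-stable direction, to the plaque $\xi^u_i(x')$ carrying $\Gamma'$. Nothing forces this: the hypothesis only gives that the two supports share one point $p$, and $x'$ may be far from $p$. Worse, the facts you propose to invoke to close the gap --- that every strong-unstable leaf in $\supp\mu$ is dense in a connected component, i.e.\ $u$-minimality --- are established only in Lemma~\ref{l.minimal}, whose proof uses Lemma~\ref{l.disjoint}; so that route is circular. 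A second, smaller inaccuracy: Corollary~\ref{c.basin} gives a positive $\nu^u$-measure set in the basin of \emph{some} ergodic $c$-Gibbs $u$-state, not of the measure whose support contains the base point; identifying that auxiliary measure with $\mu'$ already requires the accumulation-plus-holonomy argument you are trying to set up.

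The paper avoids both problems by localizing at the common point itself. Take $i$ with $p\in\cM_i$ and let $\Gamma_{i,p}\subset\xi^u_i(p)$ be a positive $\nu^u_{i,p}$-measure set of points with uniformly sized Pesin stable manifolds contained in the basin of a \emph{single} auxiliary ergodic $c$-Gibbs $u$-state $\tilde\mu_i$. Since $p$ lies in both supports, \emph{both} supports accumulate on this one plaque; $u$-saturation plus the holonomy invariance \eqref{eq_cs-invariant} of the reference measures then gives positive $\mu$-mass and positive $\mu'$-mass on the stable manifolds through $\Gamma_{i,p}$, whence $\mu=\tilde\mu_i=\mu'$ by ergodicity. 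No closeness between two a priori unrelated plaques is ever needed. (There remains the boundary case, where $\mu'$ may only accumulate on $p$ through a different Markov element $\cM_j$; the paper handles this with Remarks~\ref{r.zeroboundary} and~\ref{r.equivalentmeasures}, choosing a density point $y$ of $\Gamma_{i,p}$ off $\partial^s\cM$ so that the same set has positive $\nu^u_{j,y}$-measure. Your write-up does not address this case either.) I recommend restructuring your argument around the common point $p$ as above.
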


\begin{proof}
Let $\mu_1$ and $\mu_2$ be ergodic $c$-Gibbs $u$-states, and suppose that their supports
intersect at some point $x$. Consider any $i\in\{1, \dots, k\}$ such that $x\in\cM_i$.
By Corollary~\ref{c.measurezero}, there exists a positive $\nu^u_{i,x}$-measure subset
$\Gamma_{i,x}$ of $\xi^u_{i,x}$ consisting of points with Pesin local stable manifolds
inside the basin of some ergodic $c$-Gibbs $u$-state $\tilde\mu_i$.

Since there are finitely many Markov elements, we can fix $i$ such that the support of
$\mu_1$ accumulates on $x$ inside $\cM_i$, meaning that, every relative neighborhood of
$x$ in $\cM_i$ has positive $\mu_1$-measure. Since the support is $u$-saturated, and
the conditional probabilities are given by the reference  measures $\nu^u_{i,y}$,
which are preserved by the center-stable holonomies, it follows that there exists a
positive $\mu_1$-measure set $\Gamma_1$ consisting of points in Pesin stable manifolds
through points of $\Gamma_{i,x}$. Then $\Gamma_1$ is contained in the basin of
$\tilde\mu_i$ which, by the ergodicity of $\mu_1$, implies that $\mu_1=\tilde\mu_i$.

If $x$ is in the interior of $\cM_i$ then the support of $\mu_2$ also accumulates on it
inside $\cM_i$. Then the same argument proves that $\mu_2=\tilde\mu_i$, which yields
the conclusion of the lemma. In general, we argue as follows.
Since $\Gamma_{i,x}$ has positive $\nu^u_{i,x}$-measure, the support
$$
\supp\left(\nu^u_{i,x} \mid_{\Gamma_{i,x}}\right)
$$
is a subset of $\xi^u_i(x)$ with positive $\nu^u_{i,x}$-measure.
Thus, using also Remark~\ref{r.zeroboundary}, we may find $y$ in this support but not in
$\partial^s\cM$. In other words, there exists a neighborhood $U_y$ of $y$ inside the
strong-unstable leaf such that $U_y \cap \Gamma_{i,x}$ has positive
$\nu^u_{i,x}$-measure in $\xi^u_i(x)$,
and $U_y$ is contained in $\xi^u_j(y)$ for any $j$ such that $\cM_j$ contains $y$.
Since the support of $\mu_2$ accumulates on $y$, and the Markov partition is finite,
there must exist $j$ such that $\mu_2$ accumulates on $y$ inside $\cM_j$.
By Remark~\ref{r.equivalentmeasures}, the set  $U_y \cap \Gamma_{i,x}$ has positive
$\nu^u_{j,y}$-measure in $\xi^u_j(y)$. Keep in mind that all its points have Pesin
local stable manifolds inside the basin of $\tilde\mu_i$.
Thus, since the support of $\mu_2$ accumulates on $\xi^u_j(y)$ inside $\cM_j$,
the same argument as before gives that $\mu_2=\tilde\mu_i$.
This proves that $\mu_1 = \mu_2$.
\end{proof}

\begin{corollary}\label{c.disjoint}
If $\mu$ and $\mu_0$ are $c$-Gibbs $u$-states such that $\mu_0$ is ergodic and
$\supp\mu \subset \supp\mu_0$ then $\mu=\mu_0$.
\end{corollary}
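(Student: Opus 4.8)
The plan is to reduce this to Lemma~\ref{l.disjoint} by replacing $\mu$ with one of its ergodic components. First I would invoke part~(1) of Proposition~\ref{p.gibbs} (equivalently Lemma~\ref{l.Gibbs_decomposition}): almost every ergodic component $\mu_P$ of $\mu$ is again a $c$-Gibbs $u$-state. Pick such an ergodic component $\mu_P$; since $\mu_P$ is one of the measures appearing in the ergodic decomposition of $\mu$, we have $\supp\mu_P \subset \supp\mu \subset \supp\mu_0$. Now $\mu_P$ and $\mu_0$ are two ergodic $c$-Gibbs $u$-states whose supports intersect (indeed $\supp\mu_P$ is a nonempty subset of $\supp\mu_0$), so Lemma~\ref{l.disjoint} forces $\mu_P = \mu_0$.

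It remains to upgrade "almost every ergodic component equals $\mu_0$'' to "$\mu = \mu_0$''. Since the conclusion $\mu_P = \mu_0$ holds for $\hat\mu$-almost every $P$ in the ergodic decomposition $\mu = \int \mu_P \, d\hat\mu(P)$, and the integrand is $\hat\mu$-a.e. the constant measure $\mu_0$, we get $\mu = \int \mu_0 \, d\hat\mu(P) = \mu_0$. This finishes the argument.

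I do not expect a serious obstacle here; the only point that needs a moment's care is that a generic ergodic component $\mu_P$ genuinely has $\supp\mu_P \subset \supp\mu$, which is immediate from the disintegration formula (any set of full $\mu_P$-measure intersected suitably still sees $\supp\mu$), and that the exceptional $\hat\mu$-null set of components which may fail to be $c$-Gibbs $u$-states can be discarded without affecting the integral identity. So the corollary follows directly from Lemma~\ref{l.disjoint} together with Lemma~\ref{l.Gibbs_decomposition}.
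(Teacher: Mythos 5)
Your proposal is correct and follows essentially the same route as the paper: decompose $\mu$ into ergodic components, use Proposition~\ref{p.gibbs}(1) to see that almost every component is an ergodic $c$-Gibbs $u$-state supported inside $\supp\mu_0$, and then apply Lemma~\ref{l.disjoint} to conclude each such component equals $\mu_0$, hence $\mu=\mu_0$. The extra care you take about $\supp\mu_P\subset\supp\mu$ and the null exceptional set is fine but not a point of divergence from the paper's argument.
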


\begin{proof}
Using Proposition~\ref{p.gibbs}(1), the ergodic components of $\mu$ are (ergodic)
$c$-Gibbs $u$-states, supported inside $\supp\mu_0$.
By Lemma~\ref{l.disjoint}, this implies that all the ergodic components coincide
with $\mu_0$, and so $\mu=\mu_0$.
\end{proof}

An invariant probability measure $\mu$ of a diffeomorphism $f:M\to M$ is said to be
\emph{hyperbolic} if the Lyapunov exponents of $f$ are non-zero at $\mu$-almost every point.
We denote by $O^s$ and $O^u$ denote the sums of the the Oseledets subbundles (defined
$\mu$-almost everywhere) associated
to  the negative and positive Lyapunov exponents, respectively.
We will need the following $C^1$ version of the shadowing lemma of Katok~\cite{Kat80c}:

\begin{lemma}\label{l.katok_shadowing_lemma}
Let $f:M\to M$ be a $C^1$ diffeomorphism and $\mu$ be an ergodic hyperbolic probability measure.
Assume that the Oseledets splitting $O^s \oplus O^u$ extends to a dominated splitting of the
tangent bundle over the support of $\mu$.
Then $\mu$ is the weak$^*$ limit of a sequence of invariant measures $\mu_n$ supported on the
orbits of hyperbolic periodic points $p_n$ such that the sequence $(p_n)_n$ converges to $\supp\mu$,
each $p_n$ has $\dim O^s$ contracting eigenvalues, and their stable manifolds have uniform size.
\end{lemma}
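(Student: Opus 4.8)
The plan is to follow Katok's original strategy, replacing the Pesin-theoretic graph transform (which requires $C^{1+\alpha}$ regularity) by a Liao-type selecting argument combined with the $C^1$ shadowing lemma for quasi-hyperbolic strings under a dominated splitting. Write $E \oplus F$ for the dominated splitting over $\supp\mu$ extending $O^s \oplus O^u$, and set $\varphi(y) = \log\|Df\mid_{E_y}\|$ and $\psi(y) = \log\|Df^{-1}\mid_{F_y}\|$, which are continuous on $\supp\mu$. Since $\mu$ is ergodic and hyperbolic, and since domination forces the Oseledets decomposition to agree with $E \oplus F$ on a full-measure set, the Lyapunov exponents along $E$ are bounded above by some $-2\lambda < 0$ and those along $F$ are bounded below by $2\lambda$; in particular $\int\varphi\,d\mu < -2\lambda$ and $\int\psi\,d\mu < -2\lambda$. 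First I would fix a point $x$ that is Oseledets-regular, Birkhoff-generic for $\mu$ (so $\frac1n\sum_{j=0}^{n-1}\delta_{f^j(x)} \to \mu$), and whose forward and backward orbits are dense in $\supp\mu$.

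Next I would produce the return times. Applying the Pliss lemma to $\varphi$ along the forward orbit of $x$ yields a density-one set of $\lambda$-hyperbolic times, i.e.\ times $n$ with $\sum_{j=m}^{n-1}\varphi(f^j x) \le -\lambda(n-m)$ for all $0 \le m < n$; symmetrically, the Pliss lemma applied to $\psi$ along the backward orbit gives a density-one set of times that are hyperbolic in the unstable direction counted from the right. Intersecting these two density-one sets with the (positive-density, by ergodicity) set of times $n$ for which $f^n(x)$ lies within $\epsilon_n$ of $x$, with $\epsilon_n \to 0$, yields an increasing sequence $n_m \to \infty$ for which the orbit segment $(x, f(x), \dots, f^{n_m}(x))$ nearly closes up and is a quasi-hyperbolic string with a single constant $\lambda$ independent of $m$. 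The dominated splitting provides $Df$-invariant cone fields around $E$ and $F$, which is exactly what turns this numerical condition into genuine hyperbolic behaviour of the finite string.

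Then I would invoke the $C^1$ shadowing lemma for quasi-hyperbolic strings under a dominated splitting (Liao, Gan; the relevant $C^1$ extensions are collected in the appendix): for each large $m$ it produces a hyperbolic periodic point $p_m$ of period $n_m$ whose orbit $\delta_m$-shadows $(x, \dots, f^{n_m-1}(x))$ with $\delta_m \to 0$. Since the constant $\lambda$ entering the string estimate does not depend on $m$, the derivative $Df^{n_m}(p_m)$ has exactly $\dim E = \dim O^s$ eigenvalues of modulus less than one, and the local stable and unstable manifolds of $p_m$ have size bounded below by some $\rho = \rho(\lambda) > 0$ independent of $m$. Setting $\mu_m = \frac1{n_m}\sum_{j=0}^{n_m-1}\delta_{f^j(p_m)}$, the shadowing gives $\mu_m \to \mu$ weakly$^*$ (because $\frac1{n_m}\sum_{j=0}^{n_m-1}\delta_{f^j(x)} \to \mu$ and $\delta_m \to 0$), while the density of $\Orb(x)$ in $\supp\mu$ forces $\Orb(p_m) \to \supp\mu$ in the Hausdorff metric. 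This is the desired conclusion.

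The main obstacle is the combination of the last two steps: setting up the quasi-hyperbolic string and the $C^1$ closing/shadowing under domination carefully, and --- the more delicate point --- keeping all hyperbolicity estimates uniform in $m$ so that the periodic orbits inherit stable and unstable manifolds of a size independent of $m$. It is this uniformity, rather than mere approximation by periodic orbits, that Katok's theorem requires, and it is precisely where the dominated-splitting hypothesis substitutes for the missing $C^{1+\alpha}$ regularity.
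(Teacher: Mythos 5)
Your overall strategy is the one the paper follows: Pliss-type selection of hyperbolic times in both directions, closing up a quasi-hyperbolic string by recurrence, the Liao--Gan $C^1$ shadowing lemma, and the dominated splitting (via the locally invariant plaque families) to give the periodic points stable manifolds of uniform size. But two steps are genuinely gapped as written. The first is the inference ``the exponents along $E$ are $\le -2\lambda$, in particular $\int\varphi\,d\mu<-2\lambda$.'' This is false: negativity of the exponents only gives $\lim_n\frac1n\int\log\|Df^n\mid_E\|\,d\mu<0$, and the $n=1$ integral $\int\log\|Df\mid_E\|\,d\mu$ may well be positive (this already happens on $E^{cs}$ for derived-from-Anosov maps). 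One must first pass to a large iterate $f^N$; but then $\mu$ need not be ergodic for $f^N$, so the Pliss/Birkhoff argument has to be run on an ergodic component for $f^{N}$ and the resulting estimates transferred back to all iterates of $f$. This is precisely the bookkeeping that occupies the first half of the paper's proof (the choice of $N_s$, $N_u$, the components $\mu_s$, $\mu_u$, the common multiple $N$, and the constant $L$ controlling intermediate iterates).

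The second gap is the claim that the Pliss lemma produces hyperbolic times of density \emph{one}. It produces a set of positive density, bounded below only in terms of the gap between the Birkhoff average and the chosen threshold; to push the density to $1$ you would have to push the threshold up to $\sup\varphi$, destroying the contraction. Consequently the intersection of your forward and backward Pliss time sets may be empty, and the selection of the closing times $n_m$ does not go through as stated. Note also that what the string actually requires is that its \emph{left} endpoint be a forward Pliss point for $\varphi$ and its \emph{right} endpoint a backward Pliss point for $\psi$; these are conditions on two different points of the orbit, not on a single time. The standard repair, and the one the paper uses, is to work in phase space: the forward Pliss points form a positive-measure set $\Lambda^s$, the backward Pliss points a positive-measure set $\Lambda^u$, ergodicity gives $m$ with $\mu(f^m(\Lambda^u)\cap\Lambda^s)>0$, and Poincar\'e recurrence to this intersection produces orbit segments whose left endpoint is forward-Pliss and whose right endpoint is backward-Pliss and returns arbitrarily close to the left one. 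With these two repairs your argument coincides with the paper's.
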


This is part of a more detailed result which was proved in \cite{Yang-tese}.
Since that paper will not be published, we reproduce the proof in the appendix (see Section~\ref{a.katok}).

\begin{lemma}\label{l.periodic}
Let $\mu$ be an ergodic $c$-Gibbs $u$-state of $f$.
Then there is a hyperbolic periodic orbit $\Orb(p)$ contained in $\supp\mu$ and having exactly
$\dim E^{cs}$ contracting eigenvalues. Moreover, the union of the strong-unstable leaves through
the points of $\Orb(p)$ is dense in $\supp\mu$.
\end{lemma}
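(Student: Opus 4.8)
The plan is to combine the $C^1$ Katok shadowing lemma (Lemma~\ref{l.katok_shadowing_lemma}) with the rigidity of $c$-Gibbs $u$-states established in the previous sections. First I would check that $\mu$ is a hyperbolic measure. The center-stable Lyapunov exponents are all negative by Proposition~\ref{p.contracting} (since $\mu$ is an ergodic $c$-Gibbs $u$-state and $f$ has $c$-mostly contracting center), and the strong-unstable exponents are positive by uniform expansion of $Df\mid_{E^{uu}}$; hence $O^s = E^{cs}$ and $O^u = E^{uu}$ along $\supp\mu$, and this Oseledets splitting is exactly the partially hyperbolic (dominated) splitting over $\supp\mu$. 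So the hypotheses of Lemma~\ref{l.katok_shadowing_lemma} are met, and we obtain hyperbolic periodic points $p_n$ with $\dim E^{cs}$ contracting eigenvalues, whose orbits converge to $\supp\mu$, with uniform stable manifold size, and such that the periodic measures $\mu_n$ converge weak$^*$ to $\mu$.

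Next I would upgrade "periodic orbits converging to $\supp\mu$" to "a periodic orbit contained in $\supp\mu$". The natural mechanism is the one already used in the proofs of Lemmas~\ref{l.finitegibbs} and~\ref{l.disjoint}: by Corollary~\ref{c.basin}, every strong-unstable plaque $\xi^u_{i,x}$ with $x\in\supp\mu$ in the interior of some $\cM_i$ (which exists since $\mu(\partial\cM)=0$ by Corollary~\ref{c.measurezero}) contains a positive $\nu^u_{i,x}$-measure set of points with uniform-size Pesin stable manifolds lying in the basin of $\mu$. Since the $p_n$ have uniform stable manifold size and accumulate on $\supp\mu$, for $n$ large the stable manifold of $p_n$ will cross, transversally, the local stable manifolds through this positive-measure set — i.e. it meets the basin of $\mu$ on a set which, pushed by the center-stable holonomy (which preserves the reference measures by \eqref{eq_cs-invariant}), has positive $\nu^u$-measure on nearby plaques. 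Because $\mu$ is a $c$-Gibbs $u$-state, those plaques carry the reference measures, so the set in question has positive $\mu_n$-measure; but $\mu_n$ is supported on $\Orb(p_n)$, forcing $\Orb(p_n)$ itself into the basin of $\mu$. A point whose forward orbit is a periodic orbit and lies in the basin of $\mu$ must have $\mu$ supported on that orbit — contradiction, unless the reasoning is arranged so that instead we conclude $\Orb(p_n)\subset\supp\mu$. The clean way: the stable manifold of $p_n$ intersects $\supp\mu$ (since it meets the basin, whose closure is contained in $\supp\mu$), so by invariance and the fact that $\supp\mu$ is closed and $\cF^{uu}$-saturated (Proposition~\ref{p.gibbs}(2)) together with local product structure, $\Orb(p_n)\subset\supp\mu$. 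Fix such a $p = p_n$.

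For the last assertion, let $\Lambda$ be the closure of the union of the strong-unstable leaves through $\Orb(p)$. This is a compact, $f$-invariant, $u$-saturated set contained in $\supp\mu$. I would build a $c$-Gibbs $u$-state supported on $\Lambda$: take $x\in\cF^{uu}(p)$ and form the Cesàro averages $\frac1n\sum_{j=0}^{n-1} f^j_*\nu^u_{i,x}$; by Proposition~\ref{p.gibbs}(3) any accumulation point is a $c$-Gibbs $u$-state, and it is supported on $\Lambda$ since each $f^j_*\nu^u_{i,x}$ is supported on $f^j(\xi^u_{i,x})\subset\cF^{uu}(\Orb(p))$. Call this measure $\mu'$; then $\supp\mu'\subset\Lambda\subset\supp\mu$, and Corollary~\ref{c.disjoint} forces $\mu'=\mu$. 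Hence $\supp\mu=\supp\mu'\subset\Lambda\subset\supp\mu$, so $\Lambda=\supp\mu$, i.e. the union of strong-unstable leaves through $\Orb(p)$ is dense in $\supp\mu$.

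The main obstacle I anticipate is the second step — rigorously passing from "periodic points accumulating on $\supp\mu$ with uniform stable size" to "periodic orbit inside $\supp\mu$ in the basin of $\mu$". One must make sure the transverse intersection between the uniform stable manifold of $p_n$ and the Pesin stable lamination through the positive-$\nu^u$-measure set is genuine (using domination to control the angles, and the uniform sizes to guarantee the intersection is non-empty for large $n$), and then that the holonomy invariance of the reference measures \eqref{eq_cs-invariant} transfers positivity of measure to the plaque near $p_n$, where the $c$-Gibbs property of $\mu_n$ can be invoked. This is the same bootstrap appearing in Lemmas~\ref{l.finitegibbs} and~\ref{l.disjoint}, so it should go through, but it is the delicate part.
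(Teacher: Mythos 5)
Your proposal is correct and follows essentially the same route as the paper: hyperbolicity of $\mu$ via Proposition~\ref{p.contracting}, the $C^1$ Katok shadowing lemma to produce periodic points with uniform-size stable manifolds, the observation that these stable manifolds meet the $u$-saturated closed invariant set $\supp\mu$ (forcing $\Orb(p_n)\subset\supp\mu$), and Ces\`aro averages of the reference measures together with disjointness of supports of ergodic $c$-Gibbs $u$-states for the density claim. The detour through Corollary~\ref{c.basin} in your second step is unnecessary --- the ``clean way'' you settle on is exactly the paper's argument --- and your use of Corollary~\ref{c.disjoint} is just a packaged form of the ergodic-decomposition step the paper writes out explicitly.
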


\begin{proof}
By Proposition~\ref{p.contracting}, the center-stable Lyapunov exponents of $\mu$ are all negative,
and that implies that $\mu$ is a hyperbolic measure.
Using Lemma~\ref{l.katok_shadowing_lemma}, we find a sequence of hyperbolic periodic points $p_n$
with $\dim E^{cs}$ contracting eigenvalues, and stable manifolds of uniform size,
converging to some point $x \in \supp\mu$. In particular, for every large $n$ the stable manifold
of $p_n$ intersects the strong-unstable leaf of $x$.
By Proposition~\ref{p.gibbs}(2), it follows that the stable manifold of $p_n$ intersects $\supp\mu$.
Since the support is invariant and closed, it follows that $p_n \in \supp\mu$ for every large $n$.
This gives the first part of the claim.

Proposition~\ref{p.gibbs}(2) also gives that the strong-unstable leaves through the points of
$\Orb(p)$ are contained in $\supp\mu$. To prove that their union $\cF^{uu}(\Orb(p))$ is dense
in the support,  fix $i$ such that $p\in\cM_i$ and then consider any accumulation point $\tilde\mu$
of the sequence
$$
\frac{1}{n} \sum_{j=0}^{n-1}f^j_*\nu^u_{i,p}.
$$
It is clear that the support of $\tilde\mu$ is contained in the closure of the $\cF^{uu}(\Orb(p))$.
Also, by Proposition~\ref{p.gibbs}, $\tilde\mu$ is a $c$-Gibbs $u$-state, and so is almost every
ergodic component of it. Let $\bar\mu$ be any ergodic component with $\supp\bar\mu$.
It is no restriction to assume that $\supp\bar\mu$ is contained in $\supp\tilde\mu$ which is itself contained in $\supp\mu$.
Since the supports of ergodic $c$-Gibbs $u$-states are disjoint
(Lemma~\ref{l.disjoint}), it follows that $\bar\mu = \mu$.
Then, as the ergodic component is arbitrary, we get that $\tilde\mu = \mu$,
which proves the second part of the claim.
\end{proof}

\begin{lemma}\label{l.minimal}
The support of every ergodic $c$-Gibbs $u$-state $\mu$ has finitely many connected components,
and each of them is $u$-minimal.
Moreover, for any hyperbolic periodic point $p$ in the support and having exactly $\dim E^{cs}$
contracting eigenvalues, $\supp\mu$ coincides with the closure of the unstable manifold of
$\Orb(p)$, which is also equal to the homoclinic class of $p$.
\end{lemma}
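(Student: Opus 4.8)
The plan is to bootstrap from Lemma~\ref{l.periodic}, which already supplies a hyperbolic periodic point $p \in \supp\mu$ with exactly $\dim E^{cs}$ contracting eigenvalues and such that $\cF^{uu}(\Orb(p))$ is dense in $\supp\mu$. First I would establish that $\supp\mu$ equals the closure of the unstable manifold $W^u(\Orb(p))$. Since $p$ has unstable dimension $\dim E^{uu}$, the unstable manifold $W^u(p)$ is tangent to $E^{uu}$ and is therefore a union of strong-unstable leaves (being itself a leaf of $\cF^{uu}$ through $p$); thus $\cF^{uu}(\Orb(p)) \subset W^u(\Orb(p))$, and so $\overline{\cF^{uu}(\Orb(p))} = \overline{W^u(\Orb(p))}$. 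By Lemma~\ref{l.periodic} this common closure is $\supp\mu$.

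Next I would identify $\supp\mu$ with the homoclinic class $H(p,f)$. For one inclusion, by definition $H(p,f) = \overline{W^u(\Orb(p)) \pitchfork W^s(\Orb(p))} \subset \overline{W^u(\Orb(p))} = \supp\mu$. For the reverse inclusion, let $q$ be any other hyperbolic periodic point in $\supp\mu$ with $\dim E^{cs}$ contracting eigenvalues produced the same way; one must show $W^u(\Orb(p))$ has transverse intersections with $W^s(\Orb(q))$ accumulating densely, equivalently that $p$ and $q$ are homoclinically related. The key point is that all these periodic points lie in a single chain component, because $\supp\mu$ is an invariant transitive (indeed $u$-minimal, see below) set; using Proposition~\ref{p.gibbs}(2), the strong-unstable leaf through any point of $\supp\mu$ meets the Pesin stable lamination of $\mu$ on a positive-measure set, and the center-stable holonomies preserve the reference measures, so one can connect $W^u(p)$ to $W^s(\Orb(q))$ transversally. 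Combined with $\lambda$-lemma arguments this forces $H(p,f) \supset \overline{W^u(\Orb(p))} = \supp\mu$.

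For the structure of $\supp\mu$: since $\mu$ is ergodic, $\supp\mu$ is transitive, hence it has finitely many connected components permuted cyclically by $f$ (a standard fact, using that each component is open and closed in $\supp\mu$ and that $f$ is a homeomorphism; finiteness follows because $\mu$ has positive $u$-entropy and the components have uniformly positive diameter, or simply because a compact set has finitely many components of diameter bounded below and these are permuted transitively). On each connected component $\Lambda_0$, the return map $f^r$ ($r$ the period of the cycle) has an ergodic invariant measure which is again a $c$-Gibbs $u$-state for $f^r$, whose support is $\Lambda_0$; by Proposition~\ref{p.gibbs}(2) its support is $u$-saturated, i.e.\ $\Lambda_0$ is a union of entire strong-unstable leaves. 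To get $u$-minimality I would argue: given any strong-unstable leaf $L \subset \Lambda_0$, the closure $\overline{L}$ is $f^r$-invariant (up to taking the leaf through an $f^r$-recurrent point and using that $f^r$ permutes leaves), $u$-saturated, and carries a $c$-Gibbs $u$-state by Proposition~\ref{p.gibbs}(3) applied to a reference measure on a plaque of $L$; by Corollary~\ref{c.disjoint} that measure must be the ergodic $c$-Gibbs $u$-state supported on $\Lambda_0$, whence $\overline{L} \supset \Lambda_0$, so $\overline{L} = \Lambda_0$. This is exactly $u$-minimality.

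The main obstacle I anticipate is the homoclinic-class identification — specifically, upgrading the measure-theoretic/topological accessibility furnished by the $c$-Gibbs property and the uniform-size Pesin stable manifolds into genuine \emph{transverse} homoclinic intersections dense in $\supp\mu$, and verifying that all the candidate periodic points are homoclinically related to $p$. This requires a careful $\lambda$-lemma argument exploiting the dominated splitting $E^{cs} \oplus E^{uu}$ over $\supp\mu$ (so that the periodic stable/unstable manifolds have the right dimensions and uniform size, as in Lemma~\ref{l.katok_shadowing_lemma}), together with transitivity of $\supp\mu$. The $u$-minimality and the finiteness of components are comparatively routine once the right invariant measures are fed into Corollary~\ref{c.disjoint}.
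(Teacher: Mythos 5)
Your reduction of the first claim to Lemma~\ref{l.periodic}, and the easy inclusion $H(p,f)\subset\overline{W^u(\Orb(p))}=\supp\mu$, are fine. But the two hard points both have genuine gaps. First, the finiteness of the connected components: the ``standard fact'' you invoke is false. A compact transitive invariant set need not have open components, and a compact set can have infinitely many components of uniformly positive diameter (vertical segments over $\{0\}\cup\{1/n\}$, or any transitive subshift on a Cantor set). The actual mechanism in the paper is different: one passes to the iterate $F=f^{k}$ with $k$ the period of $p$, writes the ergodic decomposition $\mu=\frac1l(\mu_0+\cdots+f^{l-1}_*\mu_0)$ with $l\mid k$, applies Lemma~\ref{l.disjoint} to $F$ to separate the supports, and then proves each $\supp(f^j_*\mu_0)$ is \emph{connected} because the single (connected) leaf $\cF^{uu}(f^j(p))$ is dense in it. Connectedness of the pieces is what you must prove, not assume; finiteness then comes for free from $l\le k$.

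Second, your $u$-minimality argument does not close. For an arbitrary leaf $L\subset\Lambda_0$ the closure $\overline L$ is \emph{not} invariant (recurrence of a base point does not make $F(L)\subset\overline L$), and Proposition~\ref{p.gibbs}(3) together with Corollary~\ref{c.disjoint} only yields that $\overline{\bigcup_{n\ge0}F^n(L)}\supset\Lambda_0$, i.e.\ density of the \emph{orbit} of the leaf, not of the leaf itself. Passing from the orbit of $L$ to $L$ is exactly the content of the step you are missing: since the orbit of $L$ is dense it meets $W^s(p)$ transversely, hence (as $W^s(p)$ is $F$-invariant) $L$ itself meets $W^s(p)$ transversely; a compactness argument gives a uniform neighborhood $V^s_N(p)\subset W^s(p)$ met by every leaf in $\Lambda_0$; applying this to the backward iterates $\cF^{uu}(F^{-n}(x))$ and invoking the inclination lemma shows $\overline{L}\supset W^u(p)=\cF^{uu}(p)$, which is dense. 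Finally, for the homoclinic class you flag the reverse inclusion as the main obstacle and leave it open; your proposed detour through other periodic points $q$ is not needed. Once $u$-minimality is in hand the paper's argument is short: any small disk $U\subset W^u(f^i(p))$ has some forward iterate $F^n(U)$ (by $u$-minimality and uniform expansion along $\cF^{uu}$) meeting $W^s_{\loc}(p)$ transversely, so every point of $W^u(\Orb(p))$ lies in the closed invariant set $H(p,f)$. You should restructure the proof in that order: ergodic decomposition for the iterate, connectedness via density of $\cF^{uu}(p)$, $u$-minimality via the inclination lemma, and only then the homoclinic class.
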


\begin{proof}
Let $p\in \supp\mu$ be as in the statement.
We already know, from Proposition~\ref{p.gibbs}(2), that $\supp\mu$ is
$u$-saturated.
Let $F = f^{\pi(p)}$ where $\pi(p)$ denotes the period of $p$.
It is clear that $\cM$ is also a Markov partition for $F$, and thus
the two maps $f$ and $F$ have the same reference measures.
Since the inequality \eqref{eq.c_mostly_contracting} is clearly
inherited by iterates, it follows that $F$ has $c$-mostly contracting
center. Since $\mu$ is ergodic for $f$, its ergodic decomposition for
the iterate $F$ has the form
\begin{equation}\label{eq.ergodic_decomposition}
\mu = \frac{1}{l}\left(\mu_0+\cdots + f^{l-1}_*\mu_0\right),
\end{equation}
where $l$ is some divisor of $\pi(p)$,
and $\mu_0$ is an ergodic invariant measure for $f^l$.
Applying Lemma~\ref{l.disjoint} to $F$, we get that the
supports $\supp(f^j_*\mu_0)$, $j=0, \ldots, l-1$ are pairwise disjoint.
It is no restriction to assume that $p\in\supp\mu_0$.
Let $x$ be any point in $\supp\mu_0$ and $i=1, \dots, k$ such that
$x\in\cM_i$. Arguing as in the second part of the proof of
Lemma~\ref{l.periodic}, with $x$ in the place of $p$, we get
that
$$
\frac{1}{n}\sum_{j=0}^{n-1}F^j_*\nu^u_{i,x}
\text{ converges to } \mu_0 \text{ as $n\to\infty$.}
$$
This proves that the orbit $\cup_{n \ge 0} F^n(\cF^{uu}(x))$ of
the strong-unstable leaf through $x$ is dense in $\supp\mu_0$.

In particular, noting that $p$ is fixed point for $F$, the leaf
$\cF^{uu}(p)$ is dense in $\supp\mu_0$. Hence, the support of
$\mu_0$ is connected. This implies that the connected components
of $\supp\mu$ are precisely the $\supp(f^j_*\mu_0)$,
$j=0, \ldots, l-1$. In particular, they are finitely many.
To finish the proof we must deduce that each of these supports
is $u$-minimal, and for that it suffices to check that every
strong-unstable leaf $\cF^{uu}(x)$ is dense in $\supp\mu_0$.

Since the orbit of $\cF^{uu}(x)$ is dense,
by the first paragraph of the proof, there exists $n_x\ge 1$ such that
$F^{n_x}(\cF^{uu}(x))$ intersects (transversely) the stable manifold
$W^s(p)$. Since the latter is a fixed set, we get that $\cF^{uu}(x)$
itself intersects $W^s(p)$ transversely.
Let $V^s_N(p)$, $N\ge 1$ be an increasing sequence of open, relatively
compact neighborhoods of $p$ inside its stable manifold whose union is
the whole $W^s(p)$.
For each $N\ge 1$, the set of points $x\in \supp\mu_0$ whose
strong-unstable leaf intersects $V^s_N(p)$ transversely is an open
subset of the support.
Thus, by compactness of $\supp\mu_0$, one may find $N \ge 1$ such
that $\cF^{uu}(x)$ intersects $V^s_N(p)$ for every $x\in\supp\mu_0$.
Applying this conclusion to the backward iterates $\cF^{uu}(F^{-n}(x))$,
$n \ge 1$, and using the inclination lemma
(see \cite[Lemma~II.7.1]{PdM82_EN}),
we conclude that the closure of $\cF^{uu}(x)$ contains the unstable
manifold $W^u(p)=\cF^{uu}(p)$.
Therefore, $\cF^{uu}(x)$ is dense in the support of $\mu_0$,
we wanted to prove.

It is clear that the homoclinic class of $p$ is contained in the closure of
the unstable manifold of $\Orb(p)$ which, by $u$-minimality coincides with
the support of $\mu$. Thus, to prove the last part of the lemma,
we only need to show that the unstable manifold of every iterate of $p$
is contained in the homoclinic class.
Let $x$ be a point in the unstable manifold $W^u(f^i(p))$ of any iterate $f^i(p)$,
and $U$ be an arbitrarily small neighborhood of $x$ inside $W^u(f^i(p))$.
By $u$-minimality and the fact that $F$ is expanding along unstable manifolds,
there exists $n \ge 1$ such that $F^n(U)$ meets the local stable manifold of $p$
transversely, which means that $F^n(U)$ contains a point in the homoclinic class.
It follows that $x$ is in the homoclinic class, since this is an invariant closed set.
\end{proof}


\begin{lemma}\label{l.skeleton}
Let $\mu_1,\ldots, \mu_m$ be the set of ergodic $c$-Gibbs $u$-states of $f$
and, for each $i$, let $p_i$ be a hyperbolic periodic point in $\supp\mu_i$
as in Lemma~\ref{l.periodic}.
Then $\{p_1, \ldots, p_m\}$ is a skeleton for $f$.
\end{lemma}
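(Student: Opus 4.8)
The plan is to verify the three defining properties (a), (b), (c) of a skeleton one at a time, using the structure of the supports $\supp\mu_i$ obtained in Lemmas~\ref{l.periodic} and~\ref{l.minimal}. Property~(a) is essentially immediate: by Lemma~\ref{l.periodic} each $p_i$ is a hyperbolic periodic point with exactly $\dim E^{cs}$ contracting eigenvalues, so its stable dimension is $\dim E^{cs}$, as required. Property~(c), namely $W^u(p_i)\cap W^s(\Orb(p_j))=\emptyset$ for $i\neq j$, will follow from Lemma~\ref{l.minimal} together with Lemma~\ref{l.disjoint}: the unstable manifold $W^u(p_i)$ is contained in $\supp\mu_i$ (it is contained in the $u$-saturated set $\supp\mu_i$ by Proposition~\ref{p.gibbs}(2), since $\cF^{uu}(p_i)=W^u(p_i)$), and the stable manifold $W^s(\Orb(p_j))$, if it met $W^u(p_i)$, would force an intersection point to lie in $\supp\mu_i$. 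But the forward orbit of a point of $W^s(\Orb(p_j))$ accumulates on $\Orb(p_j)\subset\supp\mu_j$, so that point would lie in the basin of $\mu_j$; since the supports $\supp\mu_i$ and $\supp\mu_j$ are disjoint (Lemma~\ref{l.disjoint}) and each is invariant and closed, a point cannot simultaneously be in $\supp\mu_i$ and have its orbit converge into $\supp\mu_j$. This contradiction yields~(c).

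The substantive point is property~(b): every strong-unstable leaf $\cF^{uu}(x)$, $x\in M$, has a transverse intersection with $\bigcup_j W^s(\Orb(p_j))$. Here I would invoke Corollary~\ref{c.basin}. Fix any $x\in M$; choosing $i$ and a point of $\cM_i$ lying on $\cF^{uu}(x)$, Corollary~\ref{c.basin} gives a positive $\nu^u$-measure subset of that strong-unstable plaque consisting of points in the basin of some ergodic $c$-Gibbs $u$-state $\mu_j$. I then want to upgrade "being in the basin of $\mu_j$" to "lying in $W^s(\Orb(p_j))$ transversely". For this, recall from the proof of Proposition~\ref{p.contracting} (via \cite[Theorem~3.11]{ABC05}) that $\mu_j$-almost every point has a Pesin local stable manifold of dimension $\dim E^{cs}$, and that $p_j\in\supp\mu_j$ with $\cF^{uu}(\Orb(p_j))$ dense in $\supp\mu_j$ (Lemma~\ref{l.periodic}). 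A point $y$ in the basin of $\mu_j$ has forward orbit that enters, infinitely often, any neighborhood of $p_j$; combining this with the uniform size of the stable manifolds of $p_j$ and the density/$u$-minimality from Lemma~\ref{l.minimal}, the forward orbit of $\cF^{uu}(x)$ — hence $\cF^{uu}(x)$ itself, since it is a forward-invariant family up to saturation and $f$ expands along it — must cross $W^s(p_j)$. The crossing is transverse because $W^s(\Orb(p_j))$ is tangent to a subbundle contained in $E^{cs}$, which is transverse to $E^{uu}$, the tangent direction of $\cF^{uu}$. This gives~(b).

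The main obstacle I anticipate is the bookkeeping in property~(b): Corollary~\ref{c.basin} only tells us that \emph{some} positive-measure set of points on the plaque lies in a basin, so one must argue carefully that this forces the \emph{leaf} $\cF^{uu}(x)$ (not just a plaque, and not just almost-every point) to have an \emph{honest transverse} intersection with a stable manifold of a \emph{periodic point} $p_j$ in the skeleton, rather than merely being "asymptotic to $\mu_j$". The bridge is the Pesin local stable manifolds of uniform size guaranteed along $\supp\mu_j$ together with the inclination lemma, exactly as in the proof of Lemma~\ref{l.minimal}: a point whose forward orbit visits a fixed neighborhood of $p_j$ and which sits on a Pesin stable manifold of definite size must actually lie on $W^s(\Orb(p_j))$, and transversality is then automatic from the $\cF^{cs}$/$\cF^{uu}$ splitting. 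Once this is in place, properties~(a)--(c) are verified and $\{p_1,\dots,p_m\}$ is a skeleton.
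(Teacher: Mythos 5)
Your proposal is correct, and for conditions (a) and (c) it matches the paper's argument in substance: (a) is immediate from the choice of the $p_i$, and (c) follows from the disjointness of the supports (Lemma~\ref{l.disjoint}) together with the fact that $W^u(p_i)=\cF^{uu}(p_i)\subset\supp\mu_i$; your direct argument for (c) --- a point of $W^s(\Orb(p_j))$ has $\omega$-limit set in $\Orb(p_j)\subset\supp\mu_j$, so it cannot lie in the closed invariant set $\supp\mu_i$ --- is a clean substitute for the paper's brief appeal to the inclination lemma. For condition (b) you take a genuinely different, though closely related, route. The paper fixes $l$ with $x\in\cM_l$, takes an accumulation point $\mu=\sum_s a_s\mu_s$ of the averages $\frac1n\sum_{i=0}^{n-1}f^i_*\nu^u_{l,x}$ (Proposition~\ref{p.gibbs}(3)), and uses $a_1\neq 0$ together with $p_1\in\supp\mu_1$ to find arbitrarily large $n$ with $(f^n_*\nu^u_{l,x})(B_r(p_1))>0$, so that $f^n(\cF^{uu}_{loc}(x))$ enters $B_r(p_1)$ and, for $r$ small, crosses $W^s_{loc}(p_1)$ transversely; invariance of $W^s(\Orb(p_1))$ then pulls the intersection back to $\cF^{uu}(x)$. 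You instead invoke Corollary~\ref{c.basin} to produce a single point $y$ on the plaque lying in the basin of some $\mu_j$, and use Birkhoff recurrence of $y$ into a small ball around $p_j$ before concluding with the same local-stable-manifold and invariance argument. Both are valid; the paper's version is marginally more economical since Proposition~\ref{p.gibbs}(3) does not rely on the $c$-mostly contracting hypothesis or on Pesin theory for this step, whereas Corollary~\ref{c.basin} does --- though under the standing hypotheses of Theorem~\ref{main} this costs nothing. In either version the only point requiring care is the one the paper also leaves implicit: a strong-unstable disk of definite inner radius centered at a point sufficiently close to $p_j$ must meet $W^s_{loc}(p_j)$, and the intersection is automatically transverse because $W^s_{loc}(p_j)$ is tangent to $E^{cs}_{p_j}$ while the disk is tangent to $E^{uu}$.
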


\begin{proof}
By Lemma~\ref{l.periodic}, the union of the strong-unstable leaves
$\cF^{uu}(f^n(p_i)) = W^s(f^n(p_i))$ through the points of $\Orb(p_i)$
is dense in $\supp\mu_i$, for each $i$.
By Lemma~\ref{l.disjoint}, the supports $\supp\mu_i$, $1\le i \le m$ are
pairwise disjoint. Thus, in view of the inclination lemma,
$$
W^s(\Orb(p_i)) \cap W^u(\Orb(p_j)) = \emptyset
$$
whenever $i \ne j$. This gives condition (c) in the definition of a
skeleton. Condition (a) is clear from the choice of the $p_i$.

Thus, to finish proving that $\{p_1, \ldots, p_m\}$ is a skeleton we
only have to prove condition (b), that is, that that every strong-unstable
leaf $\cF^{uu}(x)$ has a transverse with $W^s(\Orb(p_i))$ for some $i$.
Given $x$, fix $l=1, \dots, k$ such that $x\in\cM_l$, and let $\mu$ be any
accumulation point of the sequence
$$
\frac{1}{n}\sum_{i=0}^{n-1} f^i_*\nu^u_{l,x}.
$$
By Proposition~\ref{p.gibbs}, this is a $c$-Gibbs $u$-state, and so are
its ergodic components. Thus $\mu$ may be written as
$$
\mu = \sum_{s=1}^{m} a_s \mu_s.
$$
It is no restriction to assume that $a_1 \ne 0$.
Using the fact that $p_1 \in \supp\mu_1$, we conclude that there for any
$r>0$ there exists $n$ arbitrarily large large such that
$$
\left(f^n_*\nu^u_{l,x}\right)\left(B_r(p_1)\right)>0.
$$
This implies that $f^n(\cF^{uu}_{loc}(x))$ intersects $B_r(p_1)$, for
any $r>0$. taking $r>0$ sufficiently small, this guarantees that
$f^n(\cF^{uu}_{loc}(x))$ has a transverse intersection with $W^s(\Orb(p_1))$.
Since $W^s(\Orb(p_1))$ is an invariant set, it follows that
$\cF^{uu}_{loc}(x)$ itself has a transverse intersection with $W^s(\Orb(p_1))$.
\end{proof}

Theorem~\ref{main.skeleton} is contained in Lemmas~\ref{l.finitegibbs} to~\ref{l.skeleton}.

\section{Openess of $c$-mostly contracting center}\label{s.basic_properties}

Next we prove that the property of having $c$-mostly contracting center is open among
partially hyperbolic differmorphisms which factor to a given Anosov automorphism.
This fact will be needed in Section~\ref{s.example5}.

\begin{proposition}\label{p.robust}
Let $f:M\to M$ be a $C^1$ partially hyperbolic diffeomorphism on a compact manifold $M$.
Suppose there is a $C^1$-neighborhood $\cU$ of $f$ such that all diffeomorphisms in $\cU$
factor over the same Anosov automorphism $A:\TT^d\to\TT^d$.
If $f$ has $c$-mostly contracting center then any diffeomorphism $g$ which
is sufficiently $C^1$-close to $f$ also has $c$-mostly contracting center.
\end{proposition}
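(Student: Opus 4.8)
The plan is to argue by contradiction, combining the characterization in Proposition~\ref{p.contracting} with two semicontinuity statements: upper semicontinuity of the space of $c$-Gibbs $u$-states as the diffeomorphism varies in $\cU$, and upper semicontinuity (in the diffeomorphism and the measure jointly) of the largest center-stable Lyapunov exponent. Suppose the conclusion fails. Then there is a sequence $(g_n)_n$ in $\cU$ with $g_n\to f$ in the $C^1$ topology such that no $g_n$ has $c$-mostly contracting center. Since every $g_n\in\cU$ factors over $A$, the set $\Gibbs^u_c(g_n)$ is non-empty, convex and compact (Theorem~\ref{main.maximal.measures} together with Hu, Wu, Zhu~\cite{HWZ}), and by Proposition~\ref{p.contracting} applied to $g_n$ we may choose, for each $n$, an ergodic $c$-Gibbs $u$-state $\mu_n$ of $g_n$ having a non-negative center-stable Lyapunov exponent. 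Writing $\ell_N(g,\mu)=\tfrac1N\int\log\|Dg^N\mid_{E^{cs}_g}\|\,d\mu$, the top center-stable exponent equals $\inf_N\ell_N$ by subadditivity and invariance, so our choice gives $\ell_N(g_n,\mu_n)\ge 0$ for every $N\ge 1$. Passing to a subsequence we may assume $\mu_n\to\mu$ weak$^*$; since the $\mu_n$ are $g_n$-invariant and $g_n\to f$ uniformly, $\mu$ is $f$-invariant.

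The first key point is that $\mu\in\Gibbs^u_c(f)$; this is the statement that $g\mapsto\Gibbs^u_c(g)$ is upper semicontinuous, proved along the lines of the classical analogue for ordinary Gibbs $u$-states (see~\cite[Chapter~11]{Beyond}). The ingredients are: the strong-unstable foliation $\cF^{uu}_g$ and the bundle $E^{cs}_g$ depend continuously on $g\in\cU$ by normal hyperbolicity; the factor map $\pi_g$ can be chosen to depend continuously on $g$ (a standard consequence of the expansiveness of $A$ and the uniqueness of such semiconjugacies), so the Markov sets $\cM^g_i=\pi_g^{-1}(\cR_i)$ and their strong-unstable plaques converge to those of $f$, and the reference measures $\nu^{u,g}_{i,x}$, being the normalized pull-backs of Lebesgue on $W^u_i(\pi_g(x))$, converge to those of $f$. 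Since $\pi_{g_n,*}\mu_n=\Leb$ by Corollary~\ref{c.measurezero} applied to $g_n$, passing to the limit gives $\pi_{f,*}\mu=\Leb$, hence $\mu(\partial\cM^f)=0$; and because the partitions into strong-unstable plaques vary continuously, the disintegrations pass to the limit and the defining property ``conditional measures along $\xi^u_i$ coincide with the reference measures'' is inherited by $\mu$, so $\mu\in\Gibbs^u_c(f)$.

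The second key point is soft. For each fixed $N$, $\log\|Dg_n^N\mid_{E^{cs}_{g_n}}\|\to\log\|Df^N\mid_{E^{cs}_f}\|$ uniformly on $M$, so $\ell_N(g_n,\mu_n)\to\ell_N(f,\mu)$ and hence $\ell_N(f,\mu)\ge 0$ for every $N$; therefore the largest center-stable Lyapunov exponent $\lambda^{cs}_{\mathrm{top}}(f,\mu)=\inf_N\ell_N(f,\mu)$ is non-negative. Taking the ergodic decomposition $\mu=\int\mu_P\,d\hat\mu(P)$ and using $\lambda^{cs}_{\mathrm{top}}(f,\mu)=\int\lambda^{cs}_{\mathrm{top}}(f,\mu_P)\,d\hat\mu(P)$ (the pointwise top exponent is a $\mu$-a.e. defined function, integrable over ergodic components), there is a positive $\hat\mu$-measure set of $P$ with $\lambda^{cs}_{\mathrm{top}}(f,\mu_P)\ge 0$; by Lemma~\ref{l.Gibbs_decomposition}, $\hat\mu$-almost every $\mu_P$ is a $c$-Gibbs $u$-state, so we may fix $P$ with $\mu_P$ an ergodic $c$-Gibbs $u$-state of $f$ whose largest center-stable Lyapunov exponent is non-negative. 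This contradicts Proposition~\ref{p.contracting} applied to $f$, which has $c$-mostly contracting center by hypothesis. Hence every $g$ sufficiently $C^1$-close to $f$ has $c$-mostly contracting center.

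The main obstacle is the upper semicontinuity of $g\mapsto\Gibbs^u_c(g)$ in the second paragraph --- in particular the continuous dependence of the factor maps $\pi_g$, of the Markov sets $\cM^g_i$, and of the reference measures $\nu^{u,g}_{i,x}$ on the diffeomorphism, and the fact that the leafwise disintegrations pass to the limit. Once these continuity facts are in place the remaining steps (invariance of the limit, uniform convergence of the cocycle integrands, ergodic decomposition of Lyapunov exponents) are routine, and the contradiction is immediate from Propositions~\ref{p.contracting} and the hypothesis on $f$.
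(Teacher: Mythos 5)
Your overall skeleton is sound and close to the paper's: reduce to Proposition~\ref{p.contracting}, extract ergodic $c$-Gibbs $u$-states of the perturbations with non-negative center-stable exponent, pass to a weak$^*$ limit, and derive a contradiction using Lemma~\ref{l.Gibbs_decomposition} and the subadditive characterization of the top exponent. The third paragraph (convergence of $\ell_N$ for fixed $N$, ergodic decomposition of the top exponent) is correct and is essentially the content of Lemma~\ref{l.newcriterion} in the paper, just organized by contradiction rather than by extracting uniform constants $m$ and $a$.

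The genuine gap is exactly where you locate it: the claim that $g\mapsto\Gibbs^u_c(g)$ is upper semicontinuous. Your proposed route --- continuous dependence of $\pi_g$ on $g$, convergence of the Markov sets and reference measures, and then ``the disintegrations pass to the limit'' --- is not routine and is not carried out. Disintegrations along a measurable partition do \emph{not} in general pass to weak$^*$ limits; making this work is precisely the delicate step in the classical Pesin--Sinai theory of Gibbs $u$-states, and here one would additionally have to justify that a factor map $\pi_g$ can be chosen continuously in $g$ (the hypothesis only says each $g\in\cU$ factors over $A$) and handle the boundary sets $\partial\cM^g$. The paper avoids all of this by a soft entropy argument that is already available at this point: by Theorem~\ref{main.maximal.measures}, $\Gibbs^u_c(g)=\MM^u(g)$ for every $g\in\cU$, by Lemma~\ref{l.topologicalentropy} the common value of the maximal $u$-entropy is $h(A)$ for all $g\in\cU$, and by Yang~\cite{Yan16} the partial entropy satisfies $\limsup_n h_{\mu_n}(f_n,\cF^{uu}_{f_n})\le h_\mu(f,\cF^{uu}_f)$ when $f_n\to f$ and $\mu_n\to\mu$. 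Hence any weak$^*$ limit of $c$-Gibbs $u$-states of $f_n$ has $u$-entropy at least $h(A)$, so it is a measure of maximal $u$-entropy for $f$, i.e.\ a $c$-Gibbs $u$-state. If you replace your second paragraph by this argument, the rest of your proof goes through; as written, the central step is asserted rather than proved, and the direct measure-theoretic route would require substantially more work than the phrase ``the disintegrations pass to the limit'' suggests.
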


\begin{proof}
Let $(f_n)_n$ be a sequence of diffeomorphisms converging to $f$ in the $C^1$ topology.
We want to prove that $f_n$ has $c$-mostly contracting center for every large $n$.
For this, by Proposition~\ref{p.contracting}, we only need to show that the center Lyapunov
exponents for any ergodic $c$-Gibbs $u$-state of $f_n$ are all negative, if $n$ is large enough.

Consider any $g\in\cU$.
By Lemma~\ref{l.topologicalentropy}, its topological u-entropy $h(g, \cF^{uu})$
is equal to the topological entropy $h(A)$ of $A$.
Recall that (Hu, Wu, Zhu~\cite{HWZ}) the set $\MM^u(g)$ of measures of maximal $u$-entropy
is non-empty, convex and compact, and its extreme points are ergodic measures.
By Theorem~\ref{main.maximal.measures}, $\MM^u(g)$ coincides with the space of $c$-Gibbs
$u$-states, for any $g\in \cU$. Thus, denoting by $\cF^{uu}_g$ the strong-unstable
foliation of $g$,
$$
h_\mu(g,\cF^{uu}_g) \leq h(A)
$$
for any invariant probability $\mu$ of $g$, and the identity holds if and only if
$\mu$ is a $c$-Gibbs $u$-state.

Let $(\mu_n)_n$ be a sequence of probability measures such that each $\mu_n$ is $f_n$-invariant
and the weak$^*$ limit $\mu=\lim_n \mu_n$ exists.
Then $\mu$ is $f$-invariant and, by Yang~\cite{Yan16},
$$
\limsup_n h_{\mu_n}(f_n,\cF^{uu}_{f_n}) \leq h_\mu(f,\cF^{uu}_f).
$$
Together with the previous paragraph, this implies that
\begin{equation}\label{eq.convergencyGibbs}
\limsup_n \Gibbs^u_c(f_n) \subset \Gibbs^u_c(f).
\end{equation}

\begin{lemma}\label{l.newcriterion}
A diffeomorphism $g\in \cU$ has $c$-mostly contracting center if and only if there are
$m \ge 1$ and $a <0$ such that
$$
\frac 1m \int_M \log \|Dg^m\mid_{E_g^{cs}}\| \, d\mu < a
\text{ for any ergodic $c$-Gibbs $u$-state $\mu$ of $g$.}
$$
\end{lemma}

Let us assume this fact for a while, and use it deduce the proof of the proposition from it.
Since $\Gibbs^u_c(f)$ is weak$^*$-compact, Lemma~\ref{l.newcriterion}, together with the
assumption that $f$ has $c$-mostly contracting center,
implies that there are $m \ge 1$ and $a<0$ such that
$$
\frac 1m \int_M \log \|Df^m\mid_{E_f^{cs}}\| \, d\mu < a
\text{ for every } \mu \in \Gibbs^u_c(f).
$$
Then, using \eqref{eq.convergencyGibbs} and the fact that the center-stable bundle $E^{cs}$
depends continuously on the diffeomorphism,
$$
\frac 1m \int_M \log \|Df_n^m\mid_{E_{f_n}^{cs}}\| \, d\mu_n < \frac{a}{2}
\text{ for any $\mu_n \in \Gibbs^u_c(f)$ and any large $n\ge 1$.}
$$
Invoking Lemma~\ref{l.newcriterion} once more, we see $f_n$ does have $c$-mostly contracting center
for every large $n$.
This reduces the proof of Proposition~\ref{p.robust} to proving the previous lemma:

\begin{proof}[Proof of Lemma~\ref{l.newcriterion}]
Let $g$ have $c$-mostly contracting center. By Proposition~\ref{p.contracting}, the center Lyapunov
exponents of any ergodic $c$-Gibbs $u$-state $\mu$ of $g$ are all negative.
So, for any $\mu$, there are $m_\mu \ge 1$ and $a_\mu<0$ such that
$$
\frac 1{m_\mu} \int_M  \log \|Dg^{m_\mu}\mid_{E^{cs}}\| \, d\mu < a_\mu
$$
Observe that the inequality remains true for any probability measure in an open neighborhood of $\mu$.
Thus, by compactness, we may take $m=m_\mu$ and $a=a_\mu$ independent of $\mu$.
This proves the `only if' part of the statement.

To prove the converse, assume there are $m \ge 1$ and $a<0$ such that
$$
\frac 1m \int_M \log \|Dg^{m} \mid_{E^{cs}}\| \, d\mu < a.
$$
for any ergodic $c$-Gibbs $u$ state $\mu$ of $g$. According to the theorem of Oseledets,
the largest center center-stable Lyapunov exponent coincides with
$$
\lim_n \frac{1}{nm} \int_M \log \|Dg^{nm}\mid_{E^{cs}}\| \, d\mu
$$
which, by subadditivity, is bounded above by
$$
\frac{1}{m} \int_M \log \|Dg^{m}\mid_{E^{cs}}\| \, d\mu < a <0.
$$
This shows that the center Lyapunov exponents of any $c$-Gibbs $u$-state are negative.
By Proposition~\ref{p.contracting}, it follows that $g$ has $c$-mostly contracting center.
\end{proof}
The proof of Proposition~\ref{p.robust} is complete.
\end{proof}

\section{$\TT^3$ diffeomorphisms derived from Anosov}\label{s.example1}

In the remaining of the paper we exhibit several examples of partially hyperbolic,
dynamically coherent diffeomorphisms satisfying all the assumptions of Theorem~\ref{main},
that is, factoring over Anosov and having $c$-mostly contracting center.

For the first type of example, let $A$ be a linear Anosov diffeomorphism on three dimensional torus,
with three positive eigenvalues $0<\kappa_1< \kappa_2<1<\kappa_3$, and denote by $E_1, E_2, E_3$ the
corresponding eigenspaces. We are going to view $A$ as a partially hyperbolic diffeomorphism of $\TT^3$
with $E^{ss}_A = E^1$, $E^c_A = E^2$, $E^{uu}_A = E^3$, and $E^{cs} = E^1\oplus E^2$.

We denote $\cD(A)$ the space of $C^1$ partially hyperbolic diffeomorphisms $f:\TT^3 \to \TT^3$ in the
isotopy of $A$, which we call \emph{derived from Anosov}.
This terminology goes back Smale~\cite{Sma67}, for $2$-dimensional maps.
Partially hyperbolic diffeomorphisms derived from Anosov were first studied by Ma\~n\'e~\cite{Man78}.
It is has been shown by Ures~\cite{Ure12}, and Viana, Yang~\cite[Theorem~3.6]{ViY17} that every
$f\in\cD(A)$ admits a unique measure of maximal entropy.

\begin{proposition}\label{p.example1}
Every $f\in \cD(A)$ is dynamically coherent, factors over Anosov, and has $c$-mostly contracting center.
Hence it satisfies the conclusions of Theorem~\ref{main}.

Furthermore, every $f\in\cD(A)$ has a unique measure of maximal $u$-entropy,
and it coincides with the measure $\mu_f$ of maximal entropy.
Moreover, its support is connected.
\end{proposition}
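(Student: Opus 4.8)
The plan is to verify the three structural properties — dynamical coherence, factoring over Anosov, and $c$-mostly contracting center — and then read the remaining assertions off Theorems~\ref{main}, \ref{main.maximal.measures} and~\ref{main.skeleton}.

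\medskip
\noindent\emph{Step 1: dynamical coherence and the factor map.} Partial hyperbolicity is built into the definition of $\cD(A)$, and dynamical coherence of every $f\in\cD(A)$ on $\TT^3$ is classical (Brin--Burago--Ivanov, Potrie; see also Hammerlindl). For the factor map I would lift $f$ to $\tilde f:\RR^3\to\RR^3$; since $f$ is isotopic to $A$, the map $\tilde f$ stays at bounded $C^0$ distance from the linear lift $\tilde A$, so the Franks--Manning limit $\tilde\pi=\lim_n\tilde A^{-n}\circ\tilde f^{\,n}$ exists, is continuous, surjective, commutes with $\ZZ^3$-translations, and conjugates $\tilde f$ to $\tilde A$; it descends to a continuous surjection $\pi:\TT^3\to\TT^3$ with $\pi\circ f=A\circ\pi$, which is (H1). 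Property (H2) is that $\pi$ restricts to a homeomorphism from each strong-unstable leaf of $f$ onto the corresponding $E^3$-line of $A$; injectivity on leaves is standard for $\cD(A)$ (Ures; Viana--Yang~\cite{ViY17}) — two distinct points of a strong-unstable leaf are pushed apart by $\tilde f^{-n}$ while $\tilde A^{-n}$ keeps their images a bounded distance apart — and surjectivity then follows by connectedness. For (H3), $\pi$ collapses the center-stable directions: along a center-stable leaf the $\tilde f^{\,n}$-distances do not grow like along $E^{uu}$, so $\tilde A^{-n}\tilde f^{\,n}$ pushes the whole leaf into a single $A$-stable leaf in the limit. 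I would also record here, for later use, that since $\kappa_1<\kappa_2$ the bundle $E^{cs}$ of any $f\in\cD(A)$ splits as $E^{ss}\oplus E^c$ with $E^{ss}$ uniformly contracting and dominated by $E^c$; hence for every ergodic invariant measure the largest center-stable exponent equals the center exponent $\lambda^c$.

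\medskip
\noindent\emph{Step 2: the maximal $u$-entropy measure is $\mu_f$.} By Lemma~\ref{l.topologicalentropy}, $h(f,\cF^{uu})=h(A)=\log\kappa_3$. It is known (Ures~\cite{Ure12}, Viana--Yang~\cite{ViY17}; the upper bound coming from the fact that the fibers of $\pi$ are center arcs and so carry no topological entropy) that $h_{\mathrm{top}}(f)=\log\kappa_3$ and that $f$ has a unique — in particular ergodic — measure of maximal entropy $\mu_f$. If $\mu$ is any measure of maximal $u$-entropy then, by~\eqref{eq.two_entropies}, $h_\mu(f)\ge h_\mu(f,\cF^{uu})=\log\kappa_3=h_{\mathrm{top}}(f)$, so $\mu$ is a measure of maximal entropy and $\mu=\mu_f$. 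Since $\MM^u(f)$ is nonempty, it equals $\{\mu_f\}$, and Theorem~\ref{main.maximal.measures} gives $\Gibbs^u_c(f)=\MM^u(f)=\{\mu_f\}$; thus $f$ has a unique measure of maximal $u$-entropy, it coincides with $\mu_f$, and it is the only ergodic $c$-Gibbs $u$-state.

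\medskip
\noindent\emph{Step 3: $c$-mostly contracting center, and connected support.} Because $\mu_f$ is the only ergodic $c$-Gibbs $u$-state and, by Step~1, its top center-stable exponent is $\lambda^c(\mu_f)$, Proposition~\ref{p.contracting} reduces ``$f$ has $c$-mostly contracting center'' to the single inequality $\lambda^c(\mu_f)<0$. This is the heart of the matter. One route: from $h_{\mu_f}(f)=\log\kappa_3=h_{\mu_f}(f,\cF^{uu})$ the strong-unstable foliation already accounts for all the entropy of $\mu_f$, so if $\lambda^c(\mu_f)\ge0$ the Ledrappier--Young formula (in the $C^1$ form valid under a dominated splitting) forces the center-unstable conditionals of $\mu_f$ to sit on single strong-unstable leaves, which is incompatible with a non-negative center exponent — in the boundary case $\lambda^c(\mu_f)=0$ one uses that $\pi_*\mu_f=\Leb$ is the maximal measure of $A$, whose weak-stable exponent $\log\kappa_2$ is negative. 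Alternatively one quotes directly that the maximal measure of a derived-from-Anosov diffeomorphism of $\TT^3$ has negative center exponent~\cite{ViY17}. Granting $\lambda^c(\mu_f)<0$, Proposition~\ref{p.contracting} yields that $f$ has $c$-mostly contracting center, so $f$ meets all the hypotheses of Theorem~\ref{main} and its conclusions hold. Finally, it is known that $\cF^{uu}$ is minimal for every $f\in\cD(A)$ on $\TT^3$ (Bonatti--D\'\i az--Ures; Hammerlindl--Ures); since $\supp\mu_f$ is $u$-saturated by Proposition~\ref{p.gibbs}(2), minimality forces $\supp\mu_f=\TT^3$, which is connected.

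\medskip
The step I expect to be the main obstacle is the inequality $\lambda^c(\mu_f)<0$ in Step~3: everything else is either part of the definition of $\cD(A)$, a standard property of the Franks--Manning semiconjugacy and of derived-from-Anosov systems on $\TT^3$, or a direct application of results already established above.
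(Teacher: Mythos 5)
Your overall architecture matches the paper's: verify the structural hypotheses, identify any measure of maximal $u$-entropy with the unique measure of maximal entropy via $h_\mu(f)\ge h_\mu(f,\cF^{uu})=\log\kappa_3=h_{top}(f)$, and then reduce everything to the negativity of the center exponent of $\mu_f$. Steps 1 and 2 are fine (the paper cites Franks and Potrie where you sketch the Franks--Manning limit, but the content is the same). However, there are two genuine gaps.

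First, the inequality $\lambda^c(\mu_f)<0$ --- which you rightly single out as the crux --- is not actually established by either route you offer. From $h_{\mu_f}(f)=h_{\mu_f}(f,\cF^{uu})$ a Ledrappier--Young argument only gives that the center direction carries zero transverse dimension in the entropy formula; that is not by itself incompatible with $\lambda^c\ge 0$, and in any case Ledrappier--Young is not available for general $C^1$ maps. Your treatment of the boundary case $\lambda^c(\mu_f)=0$, transferring the negativity of $\log\kappa_2$ from $(A,\Leb)$ to $(f,\mu_f)$ through $\pi$, fails because $\pi$ is only continuous and does not transport Lyapunov exponents. The alternative of ``quoting \cite{ViY17} directly'' does not close the gap either: the bound $\lambda^c(\mu_f)\le\log\kappa_2$ is Ures's theorem \cite{Ure12}, proved there for $C^2$ and \emph{absolutely} partially hyperbolic diffeomorphisms. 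The paper has to do real work at exactly this point: it observes that absoluteness enters Ures's proof only through quasi-isometry of the unstable foliation, which Hammerlindl--Potrie \cite{HaP14} established in the pointwise case, and it removes the $C^2$ hypothesis by approximating $f$ by $C^2$ maps $f_n\in\cD(A)$, using upper semicontinuity of entropy away from homoclinic tangencies \cite{LVY13} to get $\mu_{f_n}\to\mu_f$, and then using one-dimensionality of $E^c$ to pass the bound $\lambda^c(\mu_{f_n})\le\log\kappa_2$ to the limit.

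Second, your connectedness argument invokes minimality of $\cF^{uu}$ for \emph{every} $f\in\cD(A)$, which is not a known fact in this generality; note that it would give the much stronger conclusion $\supp\mu_f=\TT^3$, which the paper does not claim. The paper's argument is short and self-contained: if $\supp\mu_f$ had $l$ connected components, then as in \eqref{eq.ergodic_decomposition} the iterate $f^l$ would have $l$ distinct ergodic measures of maximal $u$-entropy; but $f^l\in\cD(A^l)$, so the preceding arguments applied to $f^l$ show it has only one, whence $l=1$. You should replace the minimality claim with this (or an equivalent) argument.
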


\begin{proof}
Dynamical coherence was proven by Potrie~\cite[Theorem~A.1]{Pot15}.
Moreover, Franks~\cite{Fra70} proved that there exists a continuous surjective map $\pi: \TT^3\to \TT^3$
such that $\pi \circ f=A\circ \pi$. This gives condition (H1).
Corollary~7.7 and Remark~7.8 in Potrie~\cite{Pot15} give that the semiconjugacy $\pi$ maps each
strong-unstable leaf of $f$ to an unstable leaf of $A$, as required in condition (H2).
Finally, Potrie~\cite[Theorem~7.10]{Pot15} also proved that the corresponding statement for center-stable
leaves, as in condition (H3).

At this point $f$ is known to satisfy all the assumptions of Theorem~\ref{main.maximal.measures} and thus
of Lemma~\ref{l.topologicalentropy}. According to that lemma, for any $c$-Gibbs $u$-state $\mu$,
$$
h_\mu(f) \ge h_\mu(f,\cF^{uu}) = h_{top}(A)=\log k_3.
$$
We also know, from Viana, Yang~\cite[Theorem~3.6]{ViY17}, that the push-forward map $\pi_*$ preserves the
entropy, and is a bijection restricted to the subsets of invariant ergodic probability measures with entropy
larger than $|\log k_1|$. These facts imply that $h_{\pi_*\mu}(A) = h_{top}(A)$, and so $\pi_*\mu$ coincides
with the (unique) measure of maximal entropy of $A$, namely, the Lebesgue measure $\nu$ on $\TT^3$.
By the injectivity of $\pi_*$, it also follows that $\mu$ is the unique measure of maximal entropy of
$f$, and the unique $c$-Gibbs $u$-state. We denote this measure by $\mu_f$ to highlight show its dependence
on the diffeomorphism.

It remains to show that the center Lyapunov exponent of $\mu_f$ is negative.
In fact, we claim that
\begin{equation}\label{eq.lambda2}
\lambda^c(\mu_f) \leq \log k_2 <0.
\end{equation}
This was proved by Ures~\cite[Theorem 5.1]{Ure12} in the special case of $C^2$ partially hyperbolic
diffeomorphisms. In fact, he used the stronger, so-called \emph{absolute}, version of partially hyperbolicity,
whereas here we always refer to the more general \emph{pointwise} version.
However, the only step where absolute partial hyperbolicity is used in his argument is for proving that the
unstable foliation is quasi-isometric, and that has has been proved to hold in the general pointwise case,
by Hammerlindl, Potrie~\cite[Section~3]{HaP14}. Thus, in order to finish the proof of \eqref{eq.lambda2}
we only have to remove the $C^2$ restriction.

For any $C^1$ element $f\in\cD(A)$, consider a sequence of $C^2$ diffeomorphisms $f_n\in\cD(A)$ converging
to $f$ in the $C^1$ topology. Let $\mu_n$ denote the measure of maximal entropy of each $f_n$.
We claim that $(\mu_n)_n$ converges to $\mu_f$. To prove this, let $\tilde\mu$ be any accumulation point.
Since these diffeomorphisms are away from homoclinic tangencies, it follows from Liao, Viana, Yang~\cite{LVY13}
that the entropy varies upper semi-continuously:
$$
h_{\tilde\mu}(f) \geq \limsup_n h_{\mu_n}(f_n) = h_{top}(A).
$$
Thus, by our previous arguments, $(\pi_f)_*(\tilde\mu)$ is the Lebesgue measure on $\TT^3$,
and $\tilde\mu$ is the unique maximal measure of $f$. This proves the claim.
Now, as the center bundle is one-dimensional, it follows that $\lambda^c(\mu_n) \to \lambda^c(\mu_f)$.
Since we already know that $\lambda^c(\mu_n) \le \log k_2$ for every $n$, this completes the proof of
\eqref{eq.lambda2} and of the proposition.

Let $l \ge 1$ be the number of connected components of the support of $\mu_f$.
Then, as in \eqref{eq.ergodic_decomposition}, the map $f^l$ has $l$ measures of maximal $u$-entropy.
On the other hand, it is clear that $f^l \in \cD(A^l)$, and so we may apply the previous arguments
to it. In particular, we get that $f^l$ has a unique measure of maximal $u$-entropy. Thus $l=1$.
\end{proof}

\section{Partially hyperbolic diffeomorphisms with circle fiber bundle}\label{s.example2}

In this section, we consider diffeomorphisms in the set $\SPH_1(M)$ of $C^2$ partially hyperbolic, accessible, dynamically coherent diffeomorphisms with 1-dimensional center such that the center foliation $\cF^c$ forms a circle bundle, and the quotient space $M_c=M/\cF^c$ is a topological torus.
This class of dynamical systems was studied previously by Ures, Viana, Yang~\cite{UVY} and
Hertz, Hertz, Tazhibi, Ures~\cite{HHTU12}.

Recall that a partially hyperbolic diffeomorphism is \emph{accessible} if any two points $x$, $y$
may be joined by a curve formed by finitely many arcs each of which is tangent to either the
strong-stable subbundle $E^s$ or the strong-unstable bundle $E^u$.
Accessibility is known to be a $C^1$ open and $C^r$ ($r \ge 1$) dense property for the partially hyperbolic diffeomorphisms with 1-dimensional center direction (see~\cite{BHHTU,Did03}).

\begin{proposition}\label{p.example2}
Every $f \in \SPH_1(M)$ factors over Anosov.
\end{proposition}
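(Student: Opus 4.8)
The plan is to collapse the center circles and recognise the induced dynamics on the quotient torus as a topologically Anosov homeomorphism, then transport that structure back through the collapsing map.

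\textbf{Step 1: the quotient dynamics.} Since $f$ is dynamically coherent it preserves $\cF^c$, and since $\cF^c$ is a circle bundle the quotient projection $p\colon M\to M_c=M/\cF^c$ is a continuous fibre bundle with circle fibres; by hypothesis $M_c$ is a topological torus $\TT^d$, and $f$ descends to a homeomorphism $f_c\colon M_c\to M_c$ with $p\circ f=f_c\circ p$. The leaves of the center-stable foliation $\cF^{cs}$ and of the center-unstable foliation $\cF^{cu}$ (tangent to $E^c\oplus E^{uu}$) are $\cF^c$-saturated, so they project to $f_c$-invariant foliations $\cW^s=p(\cF^{cs})$ and $\cW^u=p(\cF^{cu})$ of $\TT^d$; these are transverse (because a center-stable leaf meets a center-unstable leaf in a center leaf) and $f_c$ contracts $\cW^s$ and expands $\cW^u$ uniformly. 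I would then invoke the structure theory of \cite{UVY,HHTU12}: accessibility of $f$ forces $f_c$ to be an expansive homeomorphism of $\TT^d$ with a local product structure, i.e.\ topologically Anosov. The underlying reason is that two distinct center leaves whose $f$-orbits stay uniformly close would have to lie in a common center-stable leaf and in a common center-unstable leaf, contradicting accessibility.

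\textbf{Step 2: identification of $A$, and (H1) and (H3).} Because $M_c$ is a torus and $f_c$ is a topologically Anosov homeomorphism of it, Hiraide's classification gives a hyperbolic linear automorphism $A\colon\TT^d\to\TT^d$ and a homeomorphism $h\colon M_c\to\TT^d$ with $h\circ f_c=A\circ h$, carrying $\cW^u$ onto the unstable foliation of $A$ and $\cW^s$ onto its stable foliation. Put $\pi=h\circ p\colon M\to\TT^d$. Then $\pi\circ f=h\circ f_c\circ p=A\circ h\circ p=A\circ\pi$, which is (H1). For (H3): the leaf $\cF^{cs}(x)$ is $\cF^c$-saturated, so $p(\cF^{cs}(x))$ is the $\cW^s$-leaf through $p(x)$, and hence $\pi(\cF^{cs}(x))=h(\cW^s(p(x)))$ is the stable leaf of $A$ through $\pi(x)$.

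\textbf{Step 3: (H2), and the main difficulty.} It remains to check that $\pi$ restricts to a homeomorphism from each strong-unstable leaf $\cF^{uu}(x)$ onto the unstable leaf of $A$ through $\pi(x)$. Since $\cF^{uu}(x)\subset\cF^{cu}(x)$ we already have $\pi(\cF^{uu}(x))\subset W^u_A(\pi(x))$; since $E^{uu}$ is transverse to $E^c$, the map $\pi$ sends a small strong-unstable plaque onto a relatively open subset of that unstable leaf, by exactly the argument used for the plaques in Section~\ref{s.Markov.partitions}; together with connectedness of $W^u_A(\pi(x))$ and completeness of $\cF^{uu}(x)$ this yields surjectivity and reduces the whole statement to injectivity of $\pi$ on $\cF^{uu}(x)$. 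This injectivity is the crux: if $\pi(y)=\pi(z)$ with $y,z\in\cF^{uu}(x)$, then $p(y)=p(z)$, so $y$ and $z$ lie on the same center leaf and on the same strong-unstable leaf, and one must exclude $y\ne z$. This is established in \cite{HHTU12} (see also \cite{UVY}), using that $\cF^{uu}$ is uniformly expanded while center leaves are circles of uniformly bounded diameter, so a nontrivial $\cF^{uu}$-arc joining $y$ to $z$ inside a single center circle cannot survive forward iteration. Granting this, together with the ``accessibility $\Rightarrow$ topologically Anosov quotient'' input of Step~1, the verification of (H1)--(H3) is routine; these two inputs from the structure theory of $\SPH_1(M)$ are where the genuine work lies.
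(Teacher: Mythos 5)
Your architecture is the same as the paper's: collapse the center circles, recognize the quotient dynamics $f_c$ as a topological Anosov homeomorphism of the torus, apply Hiraide to conjugate it to a linear automorphism $A$, set $\pi=h\circ p$, and reduce (H2) to the statement that each center circle meets each strong-unstable leaf in at most one point. The paper cites the global hyperbolicity of $f_c$ rather than deriving it from accessibility, but that is a cosmetic difference.

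The one place where your sketch goes wrong is the heuristic you offer for the key injectivity step. If $y\ne z$ lie on the same center circle and on the same strong-unstable leaf, the fact that the $\cF^{uu}$-arc from $y$ to $z$ has length tending to infinity under forward iteration while its endpoints stay on a circle of uniformly bounded diameter is \emph{not} by itself a contradiction: a very long strong-unstable arc can perfectly well have endpoints that are close in the ambient manifold (strong-unstable leaves are not assumed to be quasi-isometrically embedded, and in general they are not). The paper's actual argument is a domination argument run in the other direction: the ambient distance between $f^{-n}(y)$ and $f^{-n}(z)$ contracts at the strong-unstable backward rate; since the center leaves form a continuous family of uniformly $C^1$-embedded circles, the distance between $f^{-n}(y)$ and $f^{-n}(z)$ \emph{measured along the center circle} must then contract at that same rate; but domination of $E^{uu}$ over $E^{c}$ forbids the center direction from contracting as fast as the strong-unstable one under backward iteration. (The paper writes this out for $\cF^{c}\cap\cF^{ss}$ under forward iteration and treats the $\cF^{c}\cap\cF^{uu}$ case as the analogous statement for $f^{-1}$.) Since you ultimately defer this step to \cite{HHTU12,UVY}, your proof is not broken, but the justification you give for believing the cited fact would fail if taken literally, and the correct self-contained argument is short enough to include.
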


\begin{proof}
Let $f_c$ denote the map induced by any given $f\in \SPH_1(M)$ on the quotient space $M_c=M/\cF^c$,
and let $\pi: M \to M_c$ be the canonical quotient map. Then $\pi$ is a semi-conjugacy from $f$ to $f_c$. Moreover, $f_c$ is a topological Anosov homeomorphism, that is,
a globally hyperbolic homeomorphism, in the sense of \cite[Section 1.3]{Almost}
or~\cite[Section 2.2]{ViY13}. By a result of Hiraide~\cite{Hir90}, the map $f_c$ is conjugate to
a linear Anosov torus diffeomorphism $A$. Up to replacing $\pi$ with its composition with this
conjugacy, if necessary, it is no restriction to suppose that $f_c=A$. We do so from now on.
This gives condition (H1).

As before, let $\cF^c$, $\cF^{ss}$, $\cF^{uu}$ and $\cF^{cs}$ denote, respectively,
the center, strong-stable, strong-unstable and center-stable foliations of $f$.
We claim that every center leaf $\cF^c(x)$ intersects the strong-stable leaf $\cF^{ss}(x)$
only once. Indeed, suppose that there exists another point $y$ in the intersection.
The distance between $f^n(x)$ and $f^n(y)$ along the strong-stable leaf decreases exponentially.
Then, clearly, the same is true for the distance in the ambient manifold.
Since $f^n(x)$ and $f^n(y)$ belong to the same center leaf, and the center leaves are a
continuous family of $C^1$ embedded circles, this can only happen if the distance between
the points along the center leaf also decreases at the same rate. That is impossible because,
by domination, the contraction rates along center leaves are strictly weaker than along
strong-stable leaves. This contradiction proves the claim.
It follows that, under the map $\pi:M\to\TT^d$, the center-stable leaf $\cF^{cs}(x)$ projects down
to the stable manifold of $\pi(x)$ for the linear automorphism $A$.
Analogously, the strong-unstable leaf $\cF^{uu}(x)$ projects down to the unstable manifold of
$\pi(x)$ for $A$. Thus conditions (H2) and (H3) are also proved.
\end{proof}

\begin{remark}
The $C^2$ condition in the definition of $\SPH_1(M)$ was not used at all in the proof of the proposition.
Thus, the conclusion that $f$ factors over Anosov holds for every $C^1$ diffeomorphism that satisfies the
other conditions in the definition.
\end{remark}

Next, we discuss the $c$-mostly contracting center condition for this class of maps.

We begin by noting that, since each $\pi^{-1}(x_c), x_c \in M_c$ is a circle with
uniformly bounded length, and $f$ acts by homeomorphisms on those circles,
the projection $\pi$ preserves the topological entropy, and so $h_{top}(f)=h_{top}(A)$.
Thus, using Lemma~\ref{l.topologicalentropy}, we get that every $c$-Gibbs $u$-state
$\mu$ is a measure of maximal entropy for any $f\in\SPH_1(M)$:
$$
h_\mu(f)\geq h_\mu(f,\cF^{uu})=h_{top}(A)=h_{top}(f).
$$
This also proves that any measure of maximal $u$-entropy measure is also a measure of
maximal entropy.

Let us also recall the dichotomy proved in \cite{HHTU12}: for any $f\in SPH_1$,
\begin{enumerate}
\item[(a)] either $f$ is conjugate to a rotation extension of an Anosov diffeomorphism,
in which case it has a unique measure of maximal entropy, and this measure has full support
and vanishing center exponent;
\item[(b)] or $f$ admits some hyperbolic periodiC point, and has finitely many ergodic
measures of maximal entropy, all of which have non-vanishing center exponent.
\end{enumerate}

In case (a), the previous discussion implies that there is a unique measure of maximal $u$-entropy,
which coincides with the unique measure of maximal $u$-entropy. The next proposition deals
with the other case:

\begin{proposition}\label{p.SPH}
If $f\in\SPH_1(M)$ has some hyperbolic periodic point then, then it has $c$-mostly
contracting center. Moreover, an ergodic measure of $f$ is a maximal $u$ entropy measure
if and only if it is an ergodic maximal measure of $f$ with negative center exponent.
\end{proposition}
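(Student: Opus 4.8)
The plan is to reduce the whole statement to the single claim that \emph{every ergodic $c$-Gibbs $u$-state of $f$ has negative center exponent}, and then to prove that claim by passing to $f^{-1}$ and using accessibility.

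\smallskip
\noindent\emph{Reduction.} By Proposition~\ref{p.example2}, $f$ factors over the linear Anosov automorphism $A$, so Theorem~\ref{main.maximal.measures} and Proposition~\ref{p.contracting} apply to $f$; moreover, as recalled before the statement, the circle-bundle factor preserves topological entropy, so $h_{top}(f)=h_{top}(A)=h(f,\cF^{uu})$ by Lemma~\ref{l.topologicalentropy}. Hence every ergodic $c$-Gibbs $u$-state $\mu$ satisfies $h_\mu(f)\ge h_\mu(f,\cF^{uu})=h(f,\cF^{uu})=h_{top}(f)$ (using Theorem~\ref{main.maximal.measures}), so $\mu$ is an ergodic maximal measure of $f$. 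Conversely, if $\nu$ is an ergodic maximal measure of $f$ with $\lambda^c(\nu)<0$, then all center–stable exponents of $\nu$ are negative, the Pesin unstable lamination coincides $\nu$-a.e.\ with $\cF^{uu}$, and therefore $h_\nu(f)=h_\nu(f,\cF^{uu})$ (Ledrappier~\cite{LeS82,LeY85a}); since $h_\nu(f)=h_{top}(f)=h(f,\cF^{uu})$, the measure $\nu$ maximizes $u$-entropy. Finally, by Proposition~\ref{p.contracting}, $f$ has $c$-mostly contracting center if and only if all center exponents of all ergodic $c$-Gibbs $u$-states are negative (the strong–stable exponents being automatically negative). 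Thus, recalling that the ergodic maximal $u$-entropy measures are exactly the ergodic $c$-Gibbs $u$-states (Theorem~\ref{main.maximal.measures}), both assertions of the proposition follow once we prove $\lambda^c(\mu)<0$ for every ergodic $c$-Gibbs $u$-state $\mu$.

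\smallskip
\noindent\emph{Passing to $f^{-1}$.} Let $\mu$ be an ergodic $c$-Gibbs $u$-state. By the reduction it is an ergodic maximal measure of $f$, and since $f$ has a hyperbolic periodic point we are in case (b) of the dichotomy of~\cite{HHTU12}; hence $\lambda^c(\mu)\neq 0$. Suppose, for a contradiction, that $\lambda^c(\mu)>0$. Then $f^{-1}$ also lies in $\SPH_1(M)$ and factors over $A^{-1}$, with $\cF^{ss}$ playing the role of the strong-unstable foliation; and, regarded as an $f^{-1}$-invariant measure, $\mu$ has all its exponents along $E^c\oplus E^{uu}$ negative. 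Therefore the Pesin unstable lamination of $(f^{-1},\mu)$ coincides $\mu$-a.e.\ with $\cF^{ss}$, so $h_\mu(f^{-1})=h_\mu(f^{-1},\cF^{ss})$ (Ledrappier~\cite{LeS82,LeY85a}). By Lemma~\ref{l.topologicalentropy} applied to $f^{-1}$ we get $h(f^{-1},\cF^{ss})=h_{top}(A^{-1})=h_{top}(A)$, while $h_\mu(f^{-1})=h_\mu(f)=h_{top}(f)=h_{top}(A)$; hence $h_\mu(f^{-1},\cF^{ss})=h(f^{-1},\cF^{ss})$, so $\mu$ is a maximal $u$-entropy measure for $f^{-1}$, and Theorem~\ref{main.maximal.measures} (applied to $f^{-1}$) shows that $\mu$ is a $c$-Gibbs $u$-state for $f^{-1}$.

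\smallskip
\noindent\emph{The contradiction.} Now $\mu$ is a $c$-Gibbs $u$-state for \emph{both} $f$ and $f^{-1}$. By Proposition~\ref{p.gibbs}(2) applied to each, $\supp\mu$ is a union of $\cF^{uu}$-leaves and a union of $\cF^{ss}$-leaves, so accessibility of $f$ forces $\supp\mu=M$. On the other hand, $\mu$ is ergodic and $Df^{-1}$ contracts $E^c$ exponentially $\mu$-a.e., so the disintegration of $\mu$ along the (compact) center circles is purely atomic. Fix a center–unstable leaf $L$ of $f$: via $\pi$ it is a circle bundle over an unstable leaf of $A$, carrying the two transverse foliations by center circles (fibres of $\pi$) and by strong-unstable plaques of $f$ (sections of $\pi$). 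Since $\mu$ is a $c$-Gibbs $u$-state for $f$, the conditionals of $\mu|_L$ along the strong-unstable plaques are the reference measures, and $\mu|_L$ is an average of these over the space of strong-unstable plaques in $L$ (itself a center circle); atomicity of the fibre conditionals forces that average to charge only at most countably many plaques, so $\mu|_L$ is supported on an at most countable union of strong-unstable plaques and $\supp\mu\cap L\neq L$, contradicting $\supp\mu=M$. Hence $\lambda^c(\mu)<0$, completing the proof. The main obstacle is this last step: one must combine the two $c$-Gibbs descriptions of $\mu$, accessibility, and the atomicity forced by the negative $f^{-1}$-center exponent; the delicate points are the (standard, but nontrivial) purely atomic disintegration of an ergodic measure with negative center exponent over a compact-center fibre bundle, and the local computation inside $L$ turning that atomicity into a statement about $\supp\mu$.
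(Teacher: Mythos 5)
Your reduction is correct, and your overall strategy --- everything hinges on showing that no ergodic $c$-Gibbs $u$-state can have positive center exponent, attacked by passing to $f^{-1}$ --- is sound and genuinely different in form from the paper's proof, which simply invokes \cite[Lemma~5.1]{UVY} (itself resting on \cite{HHTU12} and the invariance principle). The steps up to and including ``$\mu$ is a $c$-Gibbs $u$-state for both $f$ and $f^{-1}$, hence $\supp\mu=M$ by accessibility, and the disintegration of $\mu$ along center circles is atomic'' are all correct, modulo the routine verification that $f^{-1}$ satisfies (H1)--(H3) over $A^{-1}$, which follows by symmetry from the proof of Proposition~\ref{p.example2}.

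The gap is in the last sentence of your ``contradiction'' step. Within a local center-unstable plaque $P_a=\pi^{-1}(W^u_i(a))$ the conditional measure of $\mu$ is indeed a product $\Leb^u_{i,a}\times\hat\mu_a$, and $\hat\mu_a$ is atomic, so that conditional is carried by $k$ strong-unstable plaques. But this does \emph{not} imply $\supp\mu\cap P_a\neq P_a$: the global support is the closure of the union, over \emph{all} $a'$, of the supports of the conditionals $\mu_{P_{a'}}$, and the $k$ plaques carrying $\mu_{P_{a'}}$ may vary only measurably with $a'$ and accumulate on all of $P_a$ --- exactly as a measure carried by the graph of a measurable function can have full support in a square. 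So no contradiction with $\supp\mu=M$ has been reached; you have conflated the support of a conditional measure with the intersection of the global support with the conditioning set. To close the argument one must use the fact that the atomic center disintegration is invariant under \emph{both} $s$- and $u$-holonomies (this does follow from $\mu$ being a $c$-Gibbs $u$-state for both $f$ and $f^{-1}$, via the two local product structures), and then invoke the continuity part of the invariance principle of Avila--Viana \cite{Extremal} (see also \cite{AVW2}): for an accessible system, an $su$-invariant disintegration admits a continuous $su$-invariant version. Only then do the atoms form a closed, $su$-saturated set meeting each center circle in exactly $k$ points; being $su$-saturated and proper, that set contradicts accessibility. This continuity step is precisely the nontrivial input behind \cite{HHTU12} and \cite[Lemma~5.1]{UVY}; without it your final inference fails.
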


\begin{proof}
This now follows directly from \cite[Lemma~5.1]{UVY}, here it was proved that the space of
$c$-Gibbs $u$-states (called  $\nu$-Gibbs $u$-states and denoted as $\Gibb^u_\nu(f)$ in that
paper) coincides with the the finite-dimensional simplex (denoted as $\MM^-(f)$ in that paper)
generated by the ergodic measures of maximal entropy and negative center Lyapunov exponent.
\end{proof}

Observe that although this construction is done in the $C^2$ category, it follows from
Proposition~\ref{p.robust} that the conclusion remains true in a whole $C^1$-neighborhood.

It is also worth mentioning the special case of partially hyperbolic diffeomorphisms on
$3$-dimensional nilmanifolds $M$ other than $\TT^3$, which were also studied in \cite{UVY}.
It was proved by Hammerlindl, Potrie~\cite[Propositions 1.9 and 6.4]{HaP14} that any
such diffeomorphism is in $\SPH_1(M)$ and, in addition, admits a unique compact, invariant,
$u$-minimal subset. Thus, on 3-nilmanifolds the measure of maximal $u$-entropy is always
unique, and its support is necessarily connected (even in case (b) above).

\section{Partially volume expanding topological solenoids}\label{s.example4}

Our next family of examples, which is a variation of the the classical solenoid construction
of Smale~\cite{Sma67}, was studied previously by Bonatti, Li, D. Yang~\cite{BLY13} and
Gan, Li, Viana, J. Yang~\cite{GLVY}. Formally speaking, these are just embeddings,
rather than diffeomorphisms, and so our previous results do not apply immediately to them.
However, it is clear that the arguments we presented previously extend to the
embedding setting, and these applications are worthwhile mentioning here.
Indeed, these examples may exhibit homoclinic tangencies and infinitely many
coexisting sources (see~\cite{BLY13}),
which was not the case in Sections~\ref{s.example1} and~\ref{s.example2}.
The reason this is now possible is that these examples exhibit no domination
inside the center-stable bundle.

Let $D$ be the 2-dimensional disk.
By a \emph{Smale solenoid} we mean an embedding $g_0:M \to M$ of the solid torus
$M = S^1 \times D$ of the form
$$
g_0(\theta,x)=(k\theta \mod 1, ax + b(\theta)),
$$
where $k \ge 3$ and $a\in (k^{-1},1)$ are independent of $\theta$, and
$b:S^1 \to D$ is a suitable $C^1$ map.

Now consider any $C^1$ embedding $f_0: M\to M$ of the form
\begin{equation}\label{eq.topologicalsolnoid}
    f_0(\theta, x) = (k \theta \mod 1, h_\theta(x))
\end{equation}
here $h_\theta$ is such that $\|Dh_\theta\|$ and $\|Dh^{-1}_\theta\|$
are both strictly less than $k$ at every point.
For every embedding $f:M\to M$ in a $C^1$ neighborhood of $f_0$, we denote by $\Lambda(f)$
the maximal invariant  set, that is, $\Lambda(f) = \cap_{n>0} f^n(M)$.
The definition implies that $\Lambda(f)$ is a $u$-saturated set.

While the Smale solenoid was introduced as a model for uniformly hyperbolic
dynamics, it is clearly possible to pick $f_0$ satisfying these assumptions in such a way
that $\Lambda(f_0)$ is not a uniformly hyperbolic set.
On the other hand, it was shown in \cite[Lemma~6.2]{GLVY} that $\Lambda(f_0)$ is a partially hyperbolic set
for some iterate $f_0^N$ (in what follows we take $N$ to be $1$).
Since partial hyperbolicity is a robust property, it follows that $\Lambda(f)$
is still partially hyperbolic for every $f$ in a $C^1$-neighborhood.
In addition, it follows from the stability theorem of
Hirsch, Pugh, Shub~\cite{HPS77} that $f$ is dynamically coherent and, in fact,
its center foliation is topologically conjugate to the center foliation of $f_0$,
that is, to the vertical fibration $\{\{\theta\}\times D: \theta \in S^1\}$.

\begin{proposition}\label{p.topologicalsolenoid}
Every $f$ in a $C^1$ neighborhood of $f_0$ factors over Anosov,
restricted to the maximal invariant set $\Lambda(f)$.
\end{proposition}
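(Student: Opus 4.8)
The plan is to verify conditions (H1)--(H3) of the definition of ``factors over Anosov'' for $f$ restricted to $\Lambda(f)$. First, recall the basic structure: on $M$ the bundle $E^{uu}$ is the one-dimensional subbundle tangent to the $S^1$-direction, uniformly expanded at a rate close to $k$ and dominating the two-dimensional bundle $E^{cs}$ tangent to the $D$-fibers (since $\|Dh_\theta\|$ and $\|Dh_\theta^{-1}\|$ are bounded by $k$); by Hirsch--Pugh--Shub, $f$ is dynamically coherent, with $\cF^{cs}$ topologically conjugate to the vertical fibration $\{\{\theta\}\times D\}$ and $\cF^{uu}$ a foliation whose leaves are dense lines in $\Lambda(f)$. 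In particular $(f,\Lambda(f))$ is topologically conjugate to the shift on the (generalized) $k$-solenoid --- the inverse limit of $(S^1,\times k)$ --- and so is expansive with a local product structure whose local pieces are products of an unstable arc with a transverse Cantor set carrying a natural ``$k$-adic'' addressing inherited from the $k$-to-$1$ base map.

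Next I would fix the target: let $A:\TT^2\to\TT^2$ be a hyperbolic linear automorphism with one-dimensional stable and unstable subbundles whose expanding eigenvalue $\lambda$ satisfies $1<\lambda<k$; for instance $A=\left(\begin{smallmatrix}2&1\\1&1\end{smallmatrix}\right)$, with $\lambda=(3+\sqrt5)/2<3\le k$, whose unstable leaves are dense lines in $\TT^2$. This choice is compatible with the definition: $\dim E^{uu}=1=\dim\cW^u$, while $\dim E^{cs}=2>1=\dim\cW^s$, which is allowed.

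The heart of the proof is the construction of the semiconjugacy $\pi$. On $\Lambda(f)$ one builds $\pi$ in canonical (local product) coordinates: each local unstable arc is sent homeomorphically onto a local unstable arc of $A$ by a homeomorphism conjugating expansion by $k$ to expansion by $\lambda$ --- necessarily non-affine, of power-law type $t\mapsto\mathrm{sgn}(t)\,|t|^{\log\lambda/\log k}$ --- while the transverse Cantor set is sent continuously onto a local stable arc of $A$ by a surjection conjugating the $k$-adic contraction to the contraction by $\lambda^{-1}$, matching the self-similar $k$-adic structure of the transversal with the self-similar (Markov) structure of the stable arc. These local pieces are glued and extended over $\Lambda(f)$ by demanding $\pi\circ f=A\circ\pi$, and $\pi$ may be further extended to $M$ by $\pi(m):=\lim_n A^{-n}\pi(f^n(m))$, using that $\Lambda(f)$ is an attractor so that every forward orbit in $M$ shadows an orbit in $\Lambda(f)$. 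One then checks: continuity and well-definedness of $\pi$; equivariance, which is (H1); that $\pi$ restricts to a homeomorphism from each $\cF^{uu}$-leaf onto an unstable leaf of $A$, which is (H2) and is essentially built in; and that $\pi$ sends each $\cF^{cs}$-leaf into a single stable leaf of $A$, which is (H3), because a $D$-fiber collapses onto the image of its transversal inside one stable arc. Surjectivity is then automatic, since $\pi(\Lambda(f))$ is compact --- hence closed --- and contains the $\pi$-image of one $\cF^{uu}$-leaf, which is a dense line in $\TT^2$.

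The main obstacle is the coherence of this construction, especially of its transverse component. Because no hyperbolic toral automorphism can have an expanding eigenvalue of modulus exactly $k$, the mismatch between the expansion rate $k$ of the solenoid and the rate $\lambda$ of $A$ is unavoidable, so $\pi$ cannot be smooth; one must verify that the local definitions patch consistently --- independently of the chosen local rectangle and continuously across the Cantor transversal --- and are compatible with the identifications implicit in the inverse-limit model. This is where the $k$-adic self-similarity of $\Lambda(f)$, matched to the self-similar structure of stable arcs of $A$ coming from a Markov partition, does the work. The remaining point, robustness throughout the $C^1$-neighborhood, is comparatively soft: for every $f$ near $f_0$ the set $\Lambda(f)$ is again a topological $k$-solenoid with the same local product structure --- the transverse Cantor set and its $k$-adic addressing being topological data, unchanged under $C^1$-perturbation --- so the same $A$ and an analogous $\pi$ work throughout.
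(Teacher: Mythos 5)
There is a genuine gap, and it is fatal: the target you chose cannot receive a factor map satisfying (H2). Condition (H2) requires $\pi$ to map each strong-unstable leaf of $f$ \emph{homeomorphically} onto an unstable leaf of $A$, and this is a much more rigid requirement than you allow for. By compactness of $\Lambda(f_0)$ and continuity of the unstable plaques, injectivity of $\pi$ on leaves upgrades to uniform injectivity: there exist $\delta,\epsilon>0$ such that any unstable arc of leaf-length $\delta$ has image of diameter at least $\epsilon$. Consequently an unstable arc of length $N\delta$, being a concatenation of $N$ such arcs whose images are consecutive sub-arcs of a line in $\TT^2$, has image of length at least $N\epsilon$. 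Applying this to $f_0^n(J_0)$ for a fixed unstable arc $J_0$ (whose length grows like $k^n$, since the $\theta$-component of vectors in $E^{uu}$ is multiplied by exactly $k$) and using $\pi(f_0^n(J_0))=A^n(\pi(J_0))$, whose length grows like $\lambda^n$, gives $\lambda^n\gtrsim k^n$, which is false since you chose $\lambda<k$. This is exactly the content of Lemma~\ref{l.topologicalentropy}: any factoring in the sense of (H1)--(H3) forces $h(f,\cF^{uu})=h(A)$, and here $h(f_0\mid_{\Lambda(f_0)},\cF^{uu})=\log k$ while $h(A)=\log\lambda$. No choice of hyperbolic toral automorphism repairs this, because (as you yourself note) no such automorphism has expanding eigenvalue equal to the integer $k$; the mismatch of rates is not a smoothness issue that a power-law reparametrization can absorb, but a topological obstruction counting how many Markov plaques tile an iterated unstable arc. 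The same count also breaks your transverse construction: collapsing the Cantor transversal onto a stable arc is compatible with (H3) but cannot be made compatible with the leafwise homeomorphism demanded by (H2).

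The paper avoids this entirely by taking the factor to be the Smale solenoid $g_0\mid_{\Lambda(g_0)}$ itself, which is a uniformly hyperbolic basic set with the correct expansion rate $k$; as announced at the beginning of Section~\ref{s.example4}, in this semi-global setting ``Anosov'' is to be read as this hyperbolic model, with (H1)--(H3) interpreted accordingly. The semiconjugacy is then essentially forced: since $g_0\mid_{\Lambda(g_0)}$ is the natural extension of $\theta\mapsto k\theta \bmod 1$, the full base itinerary $(\theta_n)_{n\in\ZZ}$ of a point of $\Lambda(f)$ determines a unique point of $\Lambda(g_0)$ with the same itinerary, and one checks (H1)--(H3) directly from this definition; for perturbations $f$ of $f_0$ one first straightens the center foliation via the Hirsch--Pugh--Shub stability theorem and conjugates the quotient dynamics to $\theta\mapsto k\theta$. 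If you want to salvage your write-up, replace the toral target by $g_0$ and the itinerary construction will do all the work that your local-product-structure gluing was meant to do.
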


\begin{proof}
First we deal with the case $f=f_0$ and then we explain how the arguments can be adapted to any
$C^1$-small perturbation.

Let $\pi^{cs}:M \to S^1$ be the canonical projection the projection $\pi^{cs}(\theta,x)=\theta$.
For each $(\theta,x)\in \Lambda(f_0)$ and $n \in \ZZ$, define $\theta_n=\pi^{cs}(f_0^n(\theta,x))$.
The sequence $(\theta_n)_{n\in\ZZ}$ is an orbit for the circle map $x \to kx \mod 1$.
Moreover, since $g_0: \Lambda(g_0) \to \Lambda(g_0)$ is the natural extension of that map,
there exists exactly one point $y \in D$ such that $(\theta,y) \in \Lambda(g_0)$ and
$\theta_n=\pi^{cs}(g_0^n(\theta, y))$ for every $n \in \ZZ$.

Let $\pi:\Lambda(f_0) \to \Lambda(g_0)$ be the map $\pi: (\theta,x) = (\theta,y)$ defined
in this way. This map $\pi$ is continuous and surjective, and it is a semiconjugacy between
$f_0$ and $g_0$. Moreover, by construction, it maps each local center-stable ``leaf''
$\Lambda(f_0) \cap \left(\{\theta\} \times D\right)$ of $f_0$ to the center-stable ``leaf''
$\Lambda(g_0) \cap \left(\{\theta\} \times D\right)$ of $g_0$.
Thus we proved conditions (H1) and (H3) for $f=f_0$, in versions suitable for the present
setting.

We are left to verifying condition (H2).
Let $(\theta,x)$ and $(\theta^\prime,x^\prime)$ be any two points of $\Lambda(f_0)$ in
the same local unstable manifold. Let $y^\prime$ be the point of $D$ such that $\pi(\theta^\prime,x^\prime)=(\theta^\prime,y^\prime)$.
For every $n<0$, the points $f_0^n(\theta,x)$ and $f_0^n(\theta^\prime,x^\prime)$
are contained in a cylinder $I_n \times D$, where $I_n$ is an interval whose length
goes to zero when $n\to-\infty$.
Since $f_0^n(\theta,x)$ and $g_0^n(\theta,y)$ belong to the same vertical disk
$\{\theta_n\} \times D$ for every $n$, and the same holds for
$f_0^n(\theta^\prime,x^\prime)$ and $g_0^n(\theta^\prime,y^\prime)$,
we conclude that $g_0^n(\theta,y)$ and $g_0^n(\theta^\prime,y^\prime)$ are also
contained in the cylinder $I_n \times D$. This can only occur if $(\theta,y)$
and $(\theta^\prime,y^\prime)$ belong to the same local strong-unstable leaf of $g_0$.
Thus, we proved that $\pi$ maps every strong-unstable leaf of $f_0$ inside an strong-unstable
leaf of $g_0$, as we wanted to prove.

Now let us extend this construction to a $C^1$-neighborhood of $f$.
The stability theorem of \cite{HPS77} gives that if $f$ is $C^1$-close to $f_0$
then its center foliation $\cF^c_f$ is conjugate to the center foliation of $f_0$
(that is, the vertical lamination $\left\{\{\theta\}\times D: \theta \in S^1\right\}$)
by a homeomorphism close to the identity.
Let $\pi^{cs}_f: M \to M/\cF^c_f$ be the quotient map: the quotient space is
a topological circle, and the induced quotient map $f_{cs}$ is conjugate to the
circle map $x \mapsto kx \mod 1$ by some homeomorphism $\psi:M/\cF^c_f \to S^1$.

For every $(\theta,x)\in \Lambda(f)$, consider the sequence $(\theta^f_n)_n$ on $S^1$ defined by
$$
\theta_n^f = \psi\left(\pi^{cs}_f\left(f^n(\theta,x)\right)\right).
$$
As previously, this sequence identifies a unique point $(\theta_0,y) \in \Lambda(g_0)$
such that
$$
\theta_n=\pi^{cs}\left(g_0^n(\theta_0,y)\right)
\text{ for any } n\in \ZZ.
$$
Let $\pi_f:\Lambda(f) \to \Lambda(g_0)$ be the map given by
$\pi_f(\theta,x)=(\theta_0,y)$. By the same arguments as before,
$\pi_f$ is a semiconjugacy between $f \mid_{\Lambda(f)}$ and $g_0 \mid_{\Lambda(g_0)}$,
verifying the conditions (H1), (H2), (H3).
\end{proof}

Following Gan, Li, Viana, Yang~\cite{GLVY}, we say that a partially hyperbolic diffeomorphism $f:M \to M$ is \emph{partially volume expanding} if
\begin{equation}
   |\det Df(x) \mid_H| > 1
\end{equation}
for any hyperplane $H$ of $T_xM$ that contains $E^{uu}(x)$.
This is a $C^1$ open property, and a lower bound $b > 1$ for the Jacobian may be
chosen uniformly on a neighborhood.

By \cite[Lemma~6.3]{GLVY}, the map $f_0$ above is partially volume expanding and,
thus, so is every $f$ in a $C^1$-neighborhood of $f_0$.
Let $\mu$ be an ergodic invariant probability measure for $f$.
We denote by $\lambda^u(\mu,f)$ its unstable Lyapunov exponent,
and by $\lambda^c_1(\mu,f) \leq \lambda^c_2(\mu,f)$ the two (strictly smaller)
center Lyapunov exponents.
By~\cite[formula (7)]{GLVY}, partial volume expansion implies that there is a
constant $c>0$, independent of $f$ in a neighborhood of $f_0$, such that
\begin{equation}\label{eq.partiallyve}
\lambda^u(\mu,f)+\lambda^c_1(\mu,f) > c > 0.
\end{equation}

\begin{proposition}
Every embedding $f$ in a $C^1$-neighborhood of $f_0$ has $c$-mostly contracting
center on the maximal invariant set $\Lambda(f)$.
Moreover, the space of $c$-Gibbs $u$-states of $f \mid_{\Lambda(f)}$ coincides
with the space of measures of maximal entropy.
\end{proposition}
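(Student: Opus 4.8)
The plan is to reduce the statement to Proposition~\ref{p.contracting} and to the general theory already developed, following closely the strategy used for the $\TT^3$ derived-from-Anosov case (Proposition~\ref{p.example1}). First I would establish the analogue of Lemma~\ref{l.topologicalentropy} in this semi-global setting: since $\pi_f:\Lambda(f)\to\Lambda(g_0)$ is a finite-to-one semiconjugacy with $g_0$ uniformly hyperbolic on $\Lambda(g_0)$, the topological $u$-entropy of $f\mid_{\Lambda(f)}$ equals $h(g_0,\cF^{uu}_{g_0})$, which in turn equals $h_{top}(g_0)=\log k$. Then, for any $c$-Gibbs $u$-state $\mu$ of $f\mid_{\Lambda(f)}$, combining Theorem~\ref{main.maximal.measures}, \eqref{eq.two_entropies}, and the identity $h_\mu(f,\cF^{uu})=h(f,\cF^{uu})=\log k$ gives $h_\mu(f)\ge\log k$; since $\pi_f$ preserves topological entropy and $\log k$ is the topological entropy of the shift, $\mu$ is a measure of maximal entropy for $f\mid_{\Lambda(f)}$, and conversely every measure of maximal entropy that is a $c$-Gibbs $u$-state maximizes $u$-entropy. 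The genuine content here is the reverse inclusion of supports, i.e. that \emph{every} measure of maximal entropy is a $c$-Gibbs $u$-state; this should follow from the fact that, for a maximal-entropy measure, the conditional entropy along the strong-unstable foliation already achieves the full value $\log k$ — by the Ledrappier--Young type inequalities and the factor structure — so the leafwise conditionals must be the reference (Margulis) measures, exactly as in Corollary~\ref{c.projection_mu}.

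Next I would verify $c$-mostly contracting center. By Proposition~\ref{p.contracting} it suffices to show that the center-stable Lyapunov exponents of every ergodic $c$-Gibbs $u$-state $\mu$ of $f\mid_{\Lambda(f)}$ are all negative. Let $\lambda^u(\mu,f)$ be the unstable exponent and $\lambda^c_1(\mu,f)\le\lambda^c_2(\mu,f)$ the two center-stable exponents. Because $\mu$ is a $c$-Gibbs $u$-state, its projection $\pi_{f*}\mu$ is the Margulis (Bowen--Margulis) measure of $g_0$ — equivalently, the leafwise conditionals are the reference measures — and since $\pi_f$ is a homeomorphism on strong-unstable leaves and a bounded-to-one semiconjugacy, $\lambda^u(\mu,f)=\log k$. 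On the other hand \eqref{eq.partiallyve} gives $\lambda^u(\mu,f)+\lambda^c_1(\mu,f)>c>0$... wait, that would only force $\lambda^c_1(\mu,f)>-\log k$, not negativity. So the argument must instead exploit that the total volume along the center-stable bundle is contracted: for a measure of maximal entropy the unstable exponent equals $\log k$ (the full entropy, by the Margulis description), while the topological entropy equals the volume-growth rate, so the sum of the positive exponents is exactly $\log k$; hence all center-stable exponents are $\le 0$. To get strict negativity I would argue that a zero center exponent is incompatible with $\mu$ being the unique (or one of finitely many) maximal-entropy measures together with the absence of domination inside $E^{cs}$: here one invokes the dichotomy-type results of \cite{BLY13,GLVY} (as the proposition's statement is attributed to that setting), which show that the maximal-entropy measures of these solenoids are hyperbolic with negative center exponents whenever $\Lambda(f)$ is not uniformly hyperbolic, and are manifestly negative in the uniformly hyperbolic case.

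Finally I would assemble the two halves: $c$-mostly contracting center together with Theorem~\ref{main.maximal.measures} (in its embedding version, valid here by the remark in Section~\ref{s.definitions}) gives that the measures of maximal $u$-entropy of $f\mid_{\Lambda(f)}$ are precisely the $c$-Gibbs $u$-states, and the entropy comparison above identifies this set with the set of measures of maximal entropy. The main obstacle I anticipate is the precise identification of $\pi_{f*}\mu$ with the Bowen--Margulis measure of $g_0$ and the resulting computation $\lambda^u(\mu,f)=\log k$ for maximal-$u$-entropy measures, which in the non-uniformly hyperbolic regime requires the Ledrappier--Young/Margulis-measure machinery rather than a soft argument, together with the input from \cite{BLY13,GLVY} ruling out vanishing center exponents; once that is in place, everything else is a direct application of the results already proved in Sections~\ref{s.maximal.measures} and~\ref{s.Gibbsu}.
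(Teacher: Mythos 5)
Your overall frame is the right one (reduce to Proposition~\ref{p.contracting}, get the lower bound $h_\mu(f\mid_{\Lambda(f)})\ge h_\mu(f\mid_{\Lambda(f)},\cF^{uu})=\log k$ from Lemma~\ref{l.topologicalentropy}, and close the loop with Proposition~\ref{p.uentropyandentropy}), but the two central steps are not carried out correctly. First, you assert that ``$\pi_f$ preserves topological entropy,'' so that $\log k$ is the topological entropy of $f\mid_{\Lambda(f)}$ and every $c$-Gibbs $u$-state is automatically a measure of maximal entropy. That is unjustified: the fibres of $\pi_f$ are the local center-stable ``leaves'' $\Lambda(f)\cap\cF^{cs}_{\loc}(\cdot)$, which are subsets of two-dimensional disks, and the dynamics inside them can carry positive entropy. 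Section~\ref{s.example5} exhibits exactly this failure: the same semiconjugacy to the solenoid exists there, yet $h(f\mid_{\supp\mu_f})>\log k$. In the present setting the identity $h(f\mid_{\Lambda(f)})=\log k$ is a \emph{consequence} of partial volume expansion, obtained at the end of the argument, not an input you may assume.

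Second, your mechanism for negativity of the center-stable exponents does not work. You correctly notice that \eqref{eq.partiallyve} only yields $\lambda^c_1(\mu,f)>-\log k$, but your replacement --- ``the topological entropy equals the volume-growth rate, so the sum of the positive exponents is exactly $\log k$, hence all center-stable exponents are $\le 0$'' --- runs Ruelle's inequality in the wrong direction: from $h_\mu(f)\le\sum\lambda^+$ and $h_\mu(f)\ge\log k$ one gets no upper bound on the center exponents (that would require Pesin's formula, unavailable here). The missing idea is to apply Ruelle's inequality to $f^{-1}$: if $\lambda^c_2(\mu,f)\ge 0$, the only possibly negative exponent of $f$ is $\lambda^c_1$, so $h_\mu(f)=h_\mu(f^{-1})\le-\lambda^c_1(\mu,f)<\log k$, using precisely the lower bound $\lambda^c_1>-\log k$ that you discarded; this contradicts $h_\mu(f)\ge\log k$. (Note also that for a perturbation $f$ of $f_0$ one only gets $\lambda^u(\mu,f)$ \emph{close} to $\log k$ by continuity, not equal to it.) Your fallback of citing \cite{BLY13,GLVY} for hyperbolicity of the maximal-entropy measures essentially assumes the statement being proved. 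Once negativity of all center exponents is established --- for ergodic $c$-Gibbs $u$-states from $h_\mu\ge\log k$, and for ergodic maximal-entropy measures from $h_{\mu_{max}}\ge h_\mu\ge\log k$ --- Proposition~\ref{p.uentropyandentropy} gives $h_{\mu_{max}}(f)=h_{\mu_{max}}(f,\cF^{uu})\le\log k$, and both conclusions of the proposition follow.
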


\begin{proof}
Observe that the unstable Lyapunov exponent of $f_0$ relative to any invariant
measure is equal to $\log k$. Since $E^u$ is $1$-dimensional, this exponent
varies continuously with the diffeomorphism. In particular, there exists a neighborhood of $f_0$ such that $\lambda^u(\cdot,f) > \log k - c$ for any $f$
in that neighborhood. Let $\mu$ be any ergodic $c$-Gibbs $u$-state $\mu$ of $f$.
Lemma~\ref{l.topologicalentropy} gives that
\begin{equation}\label{eq.L31}
h_\mu(f\mid_{\Lambda(f)})
\geq h_\mu(f\mid_{\Lambda(f)},\cF^{uu})=h(g_0)=\log k.
\end{equation}
Combining this with \eqref{eq.partiallyve}, we find that
\begin{equation*}
\lambda^c_1(\mu,f)
> c-\lambda^u(\mu,f)
> -\log k.
\end{equation*}
If $\lambda^c_2(\mu,f)$ were non-negative then the
Ruelle inequality (Ruelle~\cite{Rue78}) for $f^{-1}$ would yield
$$
h_\mu(f\mid_{\Lambda(f)})
= h_\mu(f^{-1}\mid_{\Lambda(f)})
\leq -\lambda^c_1(\mu,f)
< \log k,
$$
and that would contradict \eqref{eq.L31}.
Thus $\lambda^c_2(\mu,f) <  0$ for any ergodic $c$-Gibbs $u$-state $\mu$,
which means that $f$ has $c$-mostly contracting center.

Let $\mu_{max}$ be any ergodic measure of maximal entropy and $\mu$ be any
ergodic $c$-Gibbs $u$-state $\mu$ of $f$. Then
\begin{equation}\label{eq.L32}
h_{\mu_{max}}(f\mid_{\Lambda(f)})\geq h_\mu(f\mid_{\Lambda(f)})\geq \log k.
\end{equation}
By the previous argument, starting from \eqref{eq.L32} rather than \eqref{eq.L31},
we get that both center Lyapunov exponents of $\mu_{max}$ are negative,
and so $\mu_{max}$ is a hyperbolic measure.
By Proposition~\ref{p.uentropyandentropy} and Lemma~\ref{l.topologicalentropy},
it follows that
$$
h_{\mu_{max}}(f\mid_{\Lambda(f)})
= h_{\mu_{max}}(f\mid_{\Lambda(f)}, \cF^{uu})
\leq \log k.
$$
This has several consequences. To begin with, we get that $\mu_{max}$ is a measure
of maximal $u$-entropy. In other words, we have shown that every ergodic measure
of maximal entropy is also a measure of maximal $u$-entropy. Since these are convex
spaces whose extreme points are ergodic measures, it follows that the space of
measures of maximal entropy is contained in the space of measures of maximal
$u$-entropy, which (by Theorem~\ref{main.maximal.measures}) is known to coincide
with the space of $c$-Gibbs $u$-states.
Another consequence of the previous argument is that the topological entropy
$h(f \mid_{\Lambda(f)}) = h_{\mu_{max}}(f \mid_{\Lambda(f)})$ is equal to $\log k$.
Hence, using Lemma~\ref{l.topologicalentropy}, we also get that every $c$-Gibbs
$u$-state is a measure of maximal entropy.
\end{proof}

\section{Measures of maximal $u$-entropy which are not of maximal entropy}\label{s.example5}

The next examples are a modification of the construction in Section~\ref{s.example4}
the main novel feature being that, for the first time in our list of examples,
measures of maximal $u$-entropy may fail to be of maximal entropy.
This phenomenon cannot occur in the setting of uniformly hyperbolic systems.
These counterexamples are constructed on certain horseshoes which arise from homoclinic
tangencies taking place inside suitable (periodic) center-stable leaves.
The presence of such horseshoes \emph{with large entropy} is made possible, in part,
by the fact that we no longer have partial volume expansiveness, as we did in the
previous section.

\subsection{Modified topological solenoid}\label{s.modified_solenoid}

Fix $k\geq 3$, $\alpha \in (0,{\pi}/{2})$ and $a\in({k}^{-1},1)$.
Let $\phi: D \to \Int(D) $ be the embedding defined by
\begin{equation}\label{eq.contractionmap}
\phi(x)=a\circ R_{\alpha}(x)
\end{equation}
where $R_{\alpha}$ denotes the rotation of angle $\alpha$ about the origin.

Let $\Diff^1_0(D, \Int(D))$ be the space of orientation preserving $C^1$-embeddings
$D \to \Int(D)$. Every element of this space is isotopic to $\phi$.
Plykin~\cite{Ply80} built a non-empty open set $P \subset \Diff^1_0(D, \Int(D))$
such that for every $\phi \in P$ the chain recurrent set consists of the union of
a non-trivial uniformly hyperbolic attractor $A(\phi)$ with a finite set of
periodic sources. Consider any $\phi_0 \in P$ such that $A(\phi_0)$ contains a
fixed saddle-point $p$ with eigenvalues $k^u > k$ and $k^s < {1}/{k}$ such that
$k^u k^s >1$. Let $(\psi_t)_{t\in[-1,1]}$ be a smooth path in $\Diff^1_0(D,\Int(D))$
such that $\psi_0=\phi_0$ and $\psi_t= \phi$ for every $t$ close to the
endpoints $-1$ and $1$. Denote
$$
K=\max\{\|D\psi_t(x)\|: x\in D\}.
$$
Let $\beta: S^1\to S^1$ be a uniformly expanding map of degree $k$ such that
$\dot \beta (\theta) > K$ for every $\theta$ in a neighborhood $[-\epsilon_0,\epsilon_0]$
of the fixed point $0$, and let $\nu_\beta$ be its (unique) measure of maximal entropy.
Since $\nu_\beta$ has no atoms, we may find $\epsilon\in(0,\epsilon_0)$ and $c>0$ such that
\begin{equation}\label{eq.negativecenterexponents}
    \nu_\beta([-\epsilon,\epsilon]) \log K + \nu_\beta(S^1 \setminus [-\epsilon,\epsilon]) \log a<-c<0.
\end{equation}
Let $z \notin [-\epsilon,\epsilon]$ be another fixed point of $\beta$,
and $b:S^1 \to D$ be a $C^1$ map such that $b(0)=b(z)=0$.
Define $f_1: M\to M$ by $f_1(\theta, x)=(\beta(\theta), h_{\theta}(x))$ where
\begin{equation}\label{eq.htheta}
h_\theta=\left\{\begin{array}{ll}
\psi_{{\theta}/{\epsilon}} + b(\theta) & \text{for } \theta \in [-\epsilon, \epsilon] \\
\phi + b(\theta) &\text{for } \theta \notin [-\epsilon, \epsilon].
\end{array}\right.
\end{equation}

Observe that $\|Dh_\theta\| = \|D\psi_{\theta/\epsilon}\| < K < \dot \beta(\theta)$
for $\theta \in [-\epsilon,\epsilon]$, and $\|Dh_\theta\| = a < 1 < \dot \beta(\theta)$
for $\theta \notin [-\epsilon,\epsilon]$.
Thus
$$
\|D h_{\theta}(\cdot)\| < \dot \beta(\theta)
\text{ at every $\theta\in S^1$}
$$
which implies that $f_1$ is partially hyperbolic on its maximal invariant set $\Lambda(f_1)$
The center foliation is just that vertical fibration $\{\{\theta\}\times D: \theta \in S^1\}$.

Then every embedding $f:M\to M$ in a $C^1$ neighborhood of $f_1$ is partially hyperbolic on
$\Lambda(f)$ and, using the stability theorem of \cite{HPS77}, is dynamically coherent.
Now, precisely as in Proposition~\ref{p.topologicalsolenoid}, we get that $f \mid_{\Lambda(f)}$
is semiconjugate to the classical Smale solenoid $g_0:M\to M$  (condition (H1)),
by a semiconjugacy that maps each strong-unstable leaf of $f$ inside $\Lambda(f)$ to a strong-unstable leaf
of $g_0$ inside $\Lambda(g_0)$ (condition (H2)),
and each center-stable ``leaf'' $\Lambda(f) \cap \cF^{cs}_{\loc}(\cdot)$ of $f$ to
a center-stable ``leaf'' $\Lambda(g_0) \cap (\{\theta\}\times D)$ of $g_0$
(condition (H3)).

\subsection{$c$-mostly contracting center}

Next we prove that $f_1$ has $c$-mostly contracting center. In view of Proposition~\ref{p.robust},
it follows that the same is true for every diffeomorphism $f$ in a $C^1$-neighborhood.

Let $\pi^{cs}:M\to S^1$ denote the projection to the first coordinate.
Recall that $\nu_{\beta}$ denotes the measure of maximal entropy of $\beta:S^1 \to S^1$.
Since $\beta$ is uniformly expanding of degree $k$, it is conjugate to the map
$x \mapsto kx \mod 1$ by some homeomorphism $\pi_\beta:S^1 \to S^1$.

\begin{lemma}\label{l.quotienttocircle}
$(\pi^{cs})_*\mu =\nu_{\beta}$ for any $c$-Gibbs $u$-state $\mu$ of $f_1$.
\end{lemma}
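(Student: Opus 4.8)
The plan is to identify $w:=(\pi^{cs})_*\mu$ as a measure of maximal entropy for $\beta$: since $\beta$ is a uniformly expanding circle map of degree $k$ it has a unique such measure, namely $\nu_\beta$, so this identification is exactly the assertion of the lemma. First note that $w$ is $\beta$-invariant, because $\pi^{cs}\circ f_1=\beta\circ\pi^{cs}$. Since $h_{top}(\beta)=\log k$, by the variational principle it then suffices to prove $h_w(\beta)\ge\log k$.

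The point is to push the maximality of the $u$-entropy of $\mu$ down to the circle, and I would do this by going through the Smale solenoid $g_0$ over which $f_1\mid_{\Lambda(f_1)}$ factors (Proposition~\ref{p.topologicalsolenoid}). On the one hand, $\mu$ being a $c$-Gibbs $u$-state, Theorem~\ref{main.maximal.measures} (in the embedding version that applies here, as in Section~\ref{s.modified_solenoid}) says $\mu$ is a measure of maximal $u$-entropy, so Lemma~\ref{l.topologicalentropy} gives $h_\mu(f_1\mid_{\Lambda(f_1)},\cF^{uu})=h(g_0\mid_{\Lambda(g_0)})=\log k$. On the other hand I would run the chain
\[
\log k=h_\mu(f_1,\cF^{uu})\ \le\ h_{(\pi_{f_1})_*\mu}(g_0,\cW^u)\ \le\ h_{(\pi_{f_1})_*\mu}(g_0)\ =\ h_{(\pi^{cs}_{g_0})_*(\pi_{f_1})_*\mu}(E_k)\ =\ h_{(\pi_\beta)_*w}(E_k)\ =\ h_w(\beta),
\]
where $E_k:x\mapsto kx\bmod 1$ is the linear model: the first inequality is Proposition~\ref{p.upperbelow} with $g_0\mid_{\Lambda(g_0)}$ as the hyperbolic base and $\pi_{f_1}$ as the factor map; the second is \eqref{eq.two_entropies}; the first equality uses that $g_0\mid_{\Lambda(g_0)}$ is the natural extension of $(S^1,E_k)$ via $\pi^{cs}_{g_0}(\theta,x)=\theta$, natural extensions preserving entropy; the next equality uses the identity $\pi^{cs}_{g_0}\circ\pi_{f_1}=\pi_\beta\circ\pi^{cs}$, read off from the construction of $\pi_{f_1}$ (the auxiliary conjugacy there is exactly $\pi_\beta$, because the center foliation of $f_1$ is the vertical fibration and the induced quotient map is $\beta$); and the last equality uses that $\pi_\beta$ is a topological conjugacy between $\beta$ and $E_k$. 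Since $h_w(\beta)\le h_{top}(\beta)=\log k$, all the inequalities are equalities, so $h_w(\beta)=\log k$ and hence $w=\nu_\beta$.

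The routine verifications are that the embedding versions of Lemma~\ref{l.topologicalentropy} and Proposition~\ref{p.upperbelow} apply verbatim to $f_1\mid_{\Lambda(f_1)}$ (the semi-global remark already invoked for Sections~\ref{s.example4}--\ref{s.example5}), and the classical description of $\Lambda(g_0)$ as the natural extension of $(S^1,E_k)$, which yields $h(g_0\mid_{\Lambda(g_0)})=\log k$ and $h_\rho(g_0)=h_{(\pi^{cs}_{g_0})_*\rho}(E_k)$ for every invariant $\rho$. The step that I expect to need the most care is simply making sure that Proposition~\ref{p.upperbelow} and \eqref{eq.two_entropies} are legitimate with ``Anosov'' read as the hyperbolic attractor $g_0\mid_{\Lambda(g_0)}$; if one prefers to avoid this, one can bypass $g_0$ altogether by noting directly that $h(f_1\mid_{\Lambda(f_1)},\cF^{uu})=\log k$ (the strong-unstable leaves are one-dimensional and expanded at rate $\approx k$) and re-running the measurable-partition computation of \eqref{eq_igual1}--\eqref{eq_igual5} with $\pi^{cs}$ in place of the factor map $\pi$ and the whole circle $S^1$ playing the role of the ``unstable leaf'' of $\beta$.
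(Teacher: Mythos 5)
Your argument is correct and follows essentially the same route as the paper: both pass through the semiconjugacy $\pi_{f_1}$ to the Smale solenoid, use the relation $\pi_\beta\circ\pi^{cs}=\pi^{cs}_{g_0}\circ\pi_{f_1}$, and identify the pushed-forward measure as the unique measure of maximal entropy of the degree-$k$ expanding circle map. The only difference is presentational: the paper cites the conclusion of Theorem~\ref{main.maximal.measures}/Corollary~\ref{c.projection_mu} (that $(\pi_{f_1})_*\mu$ is the measure of maximal entropy of $g_0$) as a black box, whereas you unpack the underlying entropy chain via Proposition~\ref{p.upperbelow} and \eqref{eq.two_entropies}.
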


\begin{proof}
By properties (H1) and (H3) for $f_1$, there exists a semiconjugacy $\pi_{f_1}$
between $f_1\mid_{\Lambda(f_1)}$ to $g\mid_{\Lambda(g)}$ such that
\begin{equation}\label{eq.communication}
\pi_\beta \circ \pi^{cs}=  \pi^{cs} \circ \pi_{f_1}.
\end{equation}
By Theorem~\ref{main.maximal.measures}, $(\pi_{f_1})_*\mu$ is the (unique) measure
of maximal entropy of $g_0$, and so $(\pi^{cs}\circ \pi_{f_1})_*\mu $ is the measure
of maximal entropy of $x \mapsto k x \mod 1$, that is, the Lebesgue measure on $S^1$.
By \eqref{eq.communication}, it follows that $(\pi_\beta \circ \pi^{cs})_*\mu$ is
the Lebesgue measure on $S^1$, that is, $(\pi^{cs})_*\mu$ is the measure $\nu_\beta$
of maximal entropy of $\beta$.
\end{proof}

\begin{lemma}
The map $f_1:M\to M$ has $c$-mostly contracting center on the maximal invariant set $\Lambda(f_1)$.
\end{lemma}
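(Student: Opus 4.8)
The plan is to reduce to Proposition~\ref{p.contracting}. Since $f_1$ is partially hyperbolic on $\Lambda(f_1)$, dynamically coherent, and factors over Anosov (Section~\ref{s.modified_solenoid}), that proposition (whose proof only uses that $f$ factors over Anosov, and which extends to the semi-global embedding setting exactly as the other results of the paper do) reduces the statement to showing that every ergodic $c$-Gibbs $u$-state $\mu$ of $f_1$ has all of its center-stable Lyapunov exponents negative. So I fix such a $\mu$ and bound its largest center-stable exponent from above.

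First I would pin down the center-stable bundle of $f_1$ over $\Lambda(f_1)$. The vertical fibration $\{\{\theta\}\times D:\theta\in S^1\}$ is $f_1$-invariant, and the construction guarantees $\|Dh_\theta\|<\dot\beta(\theta)$ at every $\theta$, that is, the vertical directions are dominated by the base direction; hence $E^{cs}_{(\theta,x)}=\{0\}\oplus T_xD$ and $Df_1(\theta,x)\mid_{E^{cs}}=Dh_\theta(x)$. Writing $f_1^j(\theta,x)=(\theta_j,x_j)$, the top center-stable exponent of $\mu$ is therefore
\[
\lim_n\tfrac1n\log\bigl\|Dh_{\theta_{n-1}}(x_{n-1})\cdots Dh_{\theta_0}(x_0)\bigr\| ,
\]
and by submultiplicativity of the operator norm together with the Birkhoff ergodic theorem (using that $\mu$ is ergodic and that $\log\|Dh_\theta(x)\|$ is bounded on the compact set $\Lambda(f_1)$, hence integrable) this is at most $\int_{\Lambda(f_1)}\log\|Dh_\theta(x)\|\,d\mu$.

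It remains to feed in the explicit estimates from \eqref{eq.htheta}. There, $\|Dh_\theta(x)\|=\|D\psi_{\theta/\epsilon}(x)\|\le K$ for $\theta\in[-\epsilon,\epsilon]$ and $\|Dh_\theta(x)\|=a$ for $\theta\notin[-\epsilon,\epsilon]$, so $\log\|Dh_\theta(x)\|\le(\log K)\,\mathbf 1_{[-\epsilon,\epsilon]}(\theta)+(\log a)\,\mathbf 1_{S^1\setminus[-\epsilon,\epsilon]}(\theta)$. Integrating this and invoking Lemma~\ref{l.quotienttocircle} — which gives $(\pi^{cs})_*\mu=\nu_\beta$, noting also that $\nu_\beta(\{\pm\epsilon\})=0$ since $\nu_\beta$ has no atoms, so the two indicators integrate to $\nu_\beta([-\epsilon,\epsilon])$ and $\nu_\beta(S^1\setminus[-\epsilon,\epsilon])$ — I obtain
\[
\int_{\Lambda(f_1)}\log\|Dh_\theta(x)\|\,d\mu\ \le\ \nu_\beta([-\epsilon,\epsilon])\log K+\nu_\beta(S^1\setminus[-\epsilon,\epsilon])\log a\ <\ -c\ <\ 0 ,
\]
the last inequality being exactly \eqref{eq.negativecenterexponents}. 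Thus every center-stable Lyapunov exponent of $\mu$ is at most $-c$, and Proposition~\ref{p.contracting} completes the proof. There is no genuinely hard step: the only items needing a line of care are the domination bookkeeping identifying the vertical distribution as $E^{cs}$, the elementary submultiplicative/Birkhoff bound on the top exponent, and the harmless observation that $\nu_\beta$ charges neither endpoint $\pm\epsilon$; the whole choice of $\epsilon$ and $c$ around \eqref{eq.negativecenterexponents}, together with Lemma~\ref{l.quotienttocircle}, was engineered precisely so this estimate closes. It is worth recording that the bound $-c$ is uniform over all ergodic $c$-Gibbs $u$-states of $f_1$, which is what Proposition~\ref{p.robust} then propagates to a $C^1$-neighborhood.
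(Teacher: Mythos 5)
Your proposal is correct and follows essentially the same route as the paper: identify $E^{cs}$ with the vertical distribution so that $Df_1\mid_{E^{cs}}=Dh_\theta$, bound $\log\|Dh_\theta\|$ by $\log K$ on $[-\epsilon,\epsilon]$ and $\log a$ elsewhere, push forward by $\pi^{cs}$ via Lemma~\ref{l.quotienttocircle}, and invoke \eqref{eq.negativecenterexponents}. The only difference is that you spell out the subadditivity/Birkhoff step from $\int\log\|Df_1\mid_{E^{cs}}\|\,d\mu<0$ to negativity of the center-stable exponents (which the paper leaves implicit, as in Lemma~\ref{l.newcriterion}) before applying Proposition~\ref{p.contracting}.
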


\begin{proof}
The definition \eqref{eq.htheta}
gives that
$$
\begin{aligned}
\|Df_1\mid_{E^{cs}}\| & = \|D h_\theta\| < K,
\text{ for $\theta \in [-\epsilon,\epsilon]$, and}\\
\|Df_1\mid_{E^{cs}}\| & = \|D\phi\| = a
\text{ for $\theta \notin [-\epsilon,\epsilon]$.}
\end{aligned}
$$
Thus, using \eqref{eq.negativecenterexponents},
$$
\int_{S^1} \log \left(\sup_{\{\theta\} \times D} \log \|Df_1\mid_{E^{cs}}\|\right) \, d \nu_{\beta}(\theta)\\
\leq K \nu([-\epsilon,\epsilon]) + a \nu(S^1\setminus [-\epsilon,\epsilon]) < 0.
$$
By Lemma~\ref{l.quotienttocircle}, this implies that
$\int_M \log \|Df_1\mid_{E^{cs}}\| \, d\mu<0$
for any ergodic $c$-Gibbs $u$-state $\mu$ of $f_1$.
\end{proof}

Recall that $z \notin [-\epsilon,\epsilon]$ is a fixed point of the map $\beta$.
It follows from the construction that $(z,0)$ is a fixed point of $f_1$ with exactly
$2 = \dim E^{cs}$ contracting eigenvalues. Let $q(f)$ denote its hyperbolic continuation
for nearby maps $f$. Then $\{q(f)\}$ is a skeleton for $f \mid_{\Lambda(f)}$,
and so Theorem~\ref{main.skeleton} yields:

\begin{corollary}\label{c.uniqueness}
Every $f$ in a $C^1$ neighborhood of $f_1$ admits a unique $c$-Gibbs $u$-state $\mu_f$,
whose support coincides with the homoclinic class of $q(f)$, and also with the closure
of $W^{u}(q(f))$.
\end{corollary}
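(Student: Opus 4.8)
The plan is to check that $S=\{q(f)\}$ is a skeleton for $f\mid_{\Lambda(f)}$ and then invoke the semi-global version of Theorem~\ref{main.skeleton}, refining its conclusion with the extra information that $q(f)$ belongs to the support of \emph{every} ergodic $c$-Gibbs $u$-state. Recall first that, by the paragraph preceding the statement together with Proposition~\ref{p.robust}, every $f$ in a $C^1$-neighborhood of $f_1$ has $c$-mostly contracting center on its maximal invariant set $\Lambda(f)$; hence the semi-global versions of Theorem~\ref{main.skeleton} and of Lemmas~\ref{l.disjoint}, \ref{l.periodic}, \ref{l.minimal} apply. In particular $f\mid_{\Lambda(f)}$ has finitely many ergodic $c$-Gibbs $u$-states $\mu_1,\dots,\mu_m$, with pairwise disjoint supports, and for each $i$ there is a hyperbolic periodic point $p_i\in\supp\mu_i$ with exactly $\dim E^{cs}$ contracting eigenvalues such that $\supp\mu_i=\overline{W^u(\Orb(p_i))}=H(p_i,f)$.

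Conditions (a) and (c) in the definition of a skeleton are immediate for $S=\{q(f)\}$: $q(f)$ is a hyperbolic fixed point with exactly $2=\dim E^{cs}$ contracting eigenvalues by construction, and (c) is vacuous for a single orbit. The content is condition (b): every strong-unstable leaf $\cF^{uu}(x)$, $x\in\Lambda(f)$, should have a transverse intersection with $W^s(q(f))$. I would argue this through the semiconjugacy $\pi_f:\Lambda(f)\to\Lambda(g_0)$ onto the classical Smale solenoid from Proposition~\ref{p.topologicalsolenoid} (in the version of Section~\ref{s.modified_solenoid}), which by (H2) maps $\cF^{uu}_f(x)$ onto a strong-unstable leaf of $g_0$, and by (H3) — via the intertwining relation \eqref{eq.communication} — satisfies $\pi_f^{-1}\bigl((\{\hat z\}\times D)\cap\Lambda(g_0)\bigr)=L\cap\Lambda(f)$, where $L:=\cF^{cs}_f(q(f))$ is the center-stable leaf through $q(f)$ and $\hat z$ is the image of $z$ under the base conjugacy. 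Two facts then finish (b). First, every strong-unstable leaf of the solenoid crosses every meridian disk $\{\theta\}\times D$ (its tangent line maps isomorphically onto $TS^1$), so it meets $(\{\hat z\}\times D)\cap\Lambda(g_0)$; pulling back along the leafwise homeomorphism $\pi_f$ produces a point $w\in\cF^{uu}_f(x)\cap L$. Second — and this is the one step that is not purely formal — $L\subseteq W^s(q(f))$: the leaf $L$ is forward-invariant (it lies over the fixed point of the base map), and $f\mid_L$ is a uniform contraction with fixed point $q(f)$ — it is the affine map $aR_\alpha$ when $f=f_1$, and this persists for $C^1$-small perturbations precisely because $z\notin[-\epsilon,\epsilon]$, i.e.\ $z$ lies in the open region where $\|Df_1\mid_{E^{cs}}\|=a<1$. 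Therefore $f^n(w)\to q(f)$, so $w\in W^s(q(f))$; and the intersection at $w$ is transverse because $\dim\cF^{uu}_f(x)+\dim W^s(q(f))=1+2=3=\dim M$ while $T_w\cF^{uu}_f(x)=E^{uu}_w$ and $T_wW^s(q(f))=T_wL=E^{cs}_w$ are complementary. Hence $\{q(f)\}$ is a skeleton.

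Now apply condition (b) to the point $x=p_i$: since $p_i$ has stable index $\dim E^{cs}$ we have $W^u(p_i)=\cF^{uu}_f(p_i)$, so there is $w_i\in W^u(\Orb(p_i))\cap W^s(q(f))$; then $q(f)=\lim_n f^n(w_i)\in\overline{W^u(\Orb(p_i))}=\supp\mu_i$. Thus $q(f)$ lies in the support of every ergodic $c$-Gibbs $u$-state, and since these supports are pairwise disjoint (Lemma~\ref{l.disjoint}) there is exactly one of them, say $\mu_f$. Every $c$-Gibbs $u$-state has almost every ergodic component a $c$-Gibbs $u$-state (Proposition~\ref{p.gibbs}(1)), hence equal to $\mu_f$, so $\mu_f$ is in fact the unique $c$-Gibbs $u$-state. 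Finally $q(f)$ is a hyperbolic periodic point in $\supp\mu_f$ with exactly $\dim E^{cs}$ contracting eigenvalues, so the last part of Lemma~\ref{l.minimal} gives $\supp\mu_f=\overline{W^u(\Orb(q(f)))}=\overline{W^u(q(f))}=H(q(f),f)$, which is the assertion of the corollary.

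The main obstacle is the inclusion $L\subseteq W^s(q(f))$ for the perturbed maps: one must verify that the center-stable leaf through $q(f)$ remains forward-invariant and uniformly contracted by $f$ itself, not merely by $f_1$, and this is exactly where the placement of $z$ outside the ``bad window'' $[-\epsilon,\epsilon]$ (where center-stable vectors can be expanded by as much as $K>1$) is essential. As an alternative route to the same conclusion one can use that the Smale solenoid is $u$-minimal: $\pi_f(\supp\mu_i)$ is then a nonempty closed $u$-saturated invariant subset of $\Lambda(g_0)$, hence equals $\Lambda(g_0)$, so it contains $\pi_f(q(f))$; unwinding this via the same description $\pi_f^{-1}((\{\hat z\}\times D)\cap\Lambda(g_0))=L\cap\Lambda(f)\subseteq W^s(q(f))$ produces a point of $\supp\mu_i$ converging to $q(f)$, hence $q(f)\in\supp\mu_i$ directly — still relying on the same contraction property of $L$.
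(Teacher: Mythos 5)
Your proposal is correct and follows exactly the route the paper takes: the paper's entire justification is the one-line assertion that $\{q(f)\}$ is a skeleton for $f\mid_{\Lambda(f)}$ followed by an appeal to Theorem~\ref{main.skeleton}, and you have simply filled in that assertion (via the facts that every strong-unstable leaf in $\Lambda(f)$ meets $\cF^{cs}(q(f))$ and that $\cF^{cs}(q(f))\subset W^s(q(f))$ because $z\notin[-\epsilon,\epsilon]$ — precisely the two facts the paper itself invokes later in the proof of Lemma~\ref{l.uniquequasiattractor}) and then extracted uniqueness from Lemmas~\ref{l.disjoint} and~\ref{l.minimal}. Your care in deducing $m=1$ directly from condition (b) applied to the $p_i$'s, rather than assuming all skeletons have the same cardinality, is the right way to make the paper's terse citation of Theorem~\ref{main.skeleton} rigorous.
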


By Theorem~\ref{main.maximal.measures}, we have that $\mu_f$ is the unique measure of
maximal $u$-entropy. By Proposition~\ref{p.uentropyandentropy}, the entropy and the
$u$-entropy of $\mu_f$ are equal. Then, by Lemma~\ref{l.topologicalentropy}, they
also coincinde with the topological entropy of the solenoid $g_0 \mid_{\Lambda(g_0)}$,
which is equal to $\log k$. In the remainder of this section, we are going to prove that
\begin{equation}\label{eq.done_at_last}
\log k < h (f \mid _{\supp\mu_f}),
\end{equation}
and so $\mu_f$ is not a measure of maximal entropy, for typical maps $f$ in the neighborhood
of $f_1$.

In a nutshell, we are going to identify a suitable periodic point $r(f)$ close to the hyperbolic
continuation $p(f)$ of the fixed point $(0,p)$, and to show that $r(f)$ exhibits homoclinic
tangencies. The proof of the latter uses the fact that the homoclinic class of $p(f)$ generically
coincides with the homoclinic class of the other fixed point $q(f)$.
This is important because, unlike $q(f)$, the fixed point $p(f)$ has a domination property along
the center-stable which prevents the existence of homoclinic tangencies.
Then a result of Newhouse~\cite{New78a} allows us to deduce that the unfolding of such tangencies
generates (small) uniformly hyperbolic sets with large entropy. The estimates involve the
eigenvalues of the periodic point $r(f)$, which we choose close to those of $p(f)$.

\subsection{Chain recurrence classes and homoclinic classes}\label{s.chain_recurrence}

To implement this approach, we start by recalling the notion of chain recurrent class.
See \cite[Chapter~10]{Beyond} for more information.

A point $x\in M$ is \emph{chain recurrent} if for any $\epsilon>0$ there exists a finite
$\epsilon$-pseudo orbit (with at least two points) starting and ending at $x$.
Two chain recurrent points $x$ and $y$ are \emph{chain recurrent equivalent} if for any
$\epsilon>0$ there exists a finite closed $\epsilon$-pseudo orbit containing both of them.
This is an equivalence relation, and its equivalence classes are called \emph{chain recurrent classes}.

\begin{lemma}\label{l.uniquequasiattractor}
For any embedding $f:M\to M$ in a $C^1$-neighborhood of $f_1$, the fixed points $p(f)$ and
$q(f)$ belong to the same chain recurrent class.
\end{lemma}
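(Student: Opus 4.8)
The plan is to exhibit, for every $\epsilon>0$, an $\epsilon$-pseudo-orbit from $p(f)$ to $q(f)$ and one from $q(f)$ back to $p(f)$; since both are fixed points, this places them in a common chain recurrent class. Throughout I use the following facts, all available in the present (embedding) setting. By Corollary~\ref{c.uniqueness}, $f$ has a unique $c$-Gibbs $u$-state $\mu_f$, with $\supp\mu_f=H(q(f),f)=\overline{W^u(q(f))}$; being a homoclinic class, $\supp\mu_f$ is chain transitive, and it contains no source of $f$ (a source is an isolated chain recurrent class, so it cannot lie in $H(q(f),f)$ unless it coincides with $q(f)$, which it does not). Moreover $\{q(f)\}$ is a skeleton for $f\mid_{\Lambda(f)}$, so every strong-unstable leaf of $f$ meets $W^s(q(f))$ transversally. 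Finally, since $\pi^{cs}(p(f))$ is a fixed point of the expanding circle map induced by $f$ on $M/\cF^c_f$, the center-stable leaf $W^{cs}(p(f))$ is compact and $f$-forward-invariant, and $f$ restricted to it is $C^1$-conjugate to a $C^1$-small perturbation of the Plykin embedding $\phi_0\colon D\to\Int(D)$; hence it admits a hyperbolic continuation $A(f)$ of the Plykin attractor, which contains $p(f)$, is transitive, and, together with finitely many source orbits, exhausts the chain recurrent set of $f\mid_{W^{cs}(p(f))}$.

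First I would produce a chain from $p(f)$ to $q(f)$. The strong-unstable leaf $\cF^{uu}(p(f))$ is contained in $W^u(p(f))$, so the skeleton property gives a point $y\in\cF^{uu}(p(f))\cap W^s(q(f))$; then $f^{n}(y)\to q(f)$ and $f^{-n}(y)\to p(f)$. For any $\epsilon>0$ one concatenates: the constant segment at $p(f)$; a jump to a far backward iterate $f^{-N}(y)$, which is $\epsilon$-close to $p(f)$; the genuine orbit $f^{-N}(y),\dots,y,\dots,f^{M}(y)$; and a jump to $q(f)$ once that orbit is $\epsilon$-close to it. This is an $\epsilon$-chain from $p(f)$ to $q(f)$.

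Next I would produce a chain from $q(f)$ to $p(f)$. Arguing as in Lemma~\ref{l.quotienttocircle} (equivalently, by the solenoid version of Corollary~\ref{c.projection_mu}), the push-forward $(\pi^{cs})_*\mu_f$ is the measure of maximal entropy of the degree-$k$ expanding circle map, which has full support; hence $\supp\mu_f$ meets every center-stable leaf, in particular $K:=\supp\mu_f\cap W^{cs}(p(f))\neq\emptyset$. Since $K$ is compact with $f(K)\subset K$, the set $K_0:=\bigcap_{n\ge0}f^{n}(K)$ is nonempty and compact with $f(K_0)=K_0$, and $K_0\subset\supp\mu_f\cap W^{cs}(p(f))$. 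Because $\supp\mu_f$ contains no source, neither does $K_0$; and the only orbits in the maximal invariant set of $f\mid_{W^{cs}(p(f))}$ not lying in $A(f)$ are connecting orbits whose $\alpha$-limit is a source, which would then belong to the compact invariant set $K_0$, a contradiction. Hence $\emptyset\neq K_0\subset A(f)\cap\supp\mu_f$. Now pick $z\in K_0$: since $z\in\supp\mu_f=H(q(f),f)$, which is chain transitive and contains $q(f)$, there is a chain from $q(f)$ to $z$; since $z$ and $p(f)$ both lie in the transitive, hence chain transitive, hyperbolic set $A(f)\subset M$, there is a chain from $z$ to $p(f)$ built out of $\epsilon$-pseudo-orbits inside $A(f)$, which are $\epsilon$-pseudo-orbits for $f$ on $M$. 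Concatenating gives a chain from $q(f)$ to $p(f)$, and combined with the previous step this shows $p(f)$ and $q(f)$ are chain recurrently equivalent.

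The main obstacle, I expect, is the structural-stability bookkeeping behind the last facts listed in the first paragraph: that for $f$ merely $C^1$-close to $f_1$ the fixed point $p(f)$ genuinely sits on an $f$-invariant $C^1$ center-stable leaf on which $f$ acts as a small $C^1$ perturbation of $\phi_0$, so that the Plykin attractor, its distinguished saddle $p$, and its source set all continue and still exhaust the chain recurrent set there; and, relatedly, confirming that $\supp\mu_f$ meets that leaf, which relies on the projection of $\mu_f$ to the quotient circle being Lebesgue. Once those structural statements are in hand, the chain constructions above are routine.
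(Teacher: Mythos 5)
Your proof is correct, and its first half is essentially the paper's: both arguments produce a point $y\in\cF^{uu}(p(f))\cap W^s(q(f))$ (the paper gets it from the fact that every strong-unstable leaf crosses every center-stable disk together with $\cF^{cs}(q(f))\subset W^s(q(f))$; you get it from the skeleton property, which is the same fact repackaged) and then shadow that heteroclinic orbit. For the chain from $q(f)$ back to $p(f)$ you take a genuinely different route. The paper again uses a single connecting orbit: it picks $w\in\cF^{uu}(q(f))\cap\cF^{cs}(p(f))$, notes that $w$ is not periodic (it lies in $W^u(q(f))$), hence lies in the basin of the Plykin attractor $A(f)$, so its forward orbit accumulates on the transitive set $A(f)\ni p(f)$ while its backward orbit converges to $q(f)$. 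You instead locate a point of $\supp\mu_f$ inside $A(f)$ by an invariance/compactness argument (your $K_0$ is nonempty, invariant and source-free, hence contained in $A(f)$) and then concatenate chains using chain transitivity of the homoclinic class $\supp\mu_f=H(q(f),f)$ and of $A(f)$. Both arguments rest on the same structural input you flag: that $f$ restricted to the invariant disk $\cF^{cs}(p(f))$ is a $C^1$-small perturbation of a Plykin map, so its chain recurrent set is still $A(f)$ together with finitely many periodic sources and the basin of $A(f)$ is the disk minus those sources; the paper invokes exactly this stability, so you are not assuming more than it does. Your route costs a little more machinery (Corollary~\ref{c.uniqueness}, transitivity of homoclinic classes, and the small extra observations that the disk sources are genuine sources of $f$ on $M$ and hence isolated chain classes), but it buys a cleaner exclusion of the sources: the paper must argue that the particular point $w$ is non-periodic, whereas you rule the sources out of all of $\supp\mu_f$ at once, and your argument would survive even if no single heteroclinic orbit from $q(f)$ into $\cF^{cs}(p(f))$ were readily available.
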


\begin{proof}
By construction, every strong-unstable leaf inside $\Lambda(f)$ intersects every center-stable
leaf. In particular, $\cF^{uu}(p(f))$ intersects $\cF^{cs}(q(f))$.
Recall that $q(f_1)=(z, 0)$ has two contracting eigenvalues in the center-stable direction.
Thus, our construction also gives that $\cF^{cs}(q(f))$ is contained in the stable manifold
$W^s(q(f))$, as long as $f$ is close enough to $f_1$.
Thus, there is a heteroclinic intersection from $p(f)$ to $q(f)$ and, in particular,
for any $\epsilon>0$ there is an $\epsilon$-pseudo orbit from $p(f)$ to $q(f)$.

It is also true that $\cF^{uu}(q(f))$ intersects $\cF^{cs}(p(f))$ at some point $w$,
but we cannot use the same argument as before because $(p,0) = p(f_1)$ has only one
contracting eigenvalue.
We bypass this by using the fact that $p(f)$ is contained in a uniformly hyperbolic
attractor $A(f)$ of $f \mid_{\cF^{cs}(p(f))}$, given by the hyperbolic continuation of the
Plykin attractor $A(\phi_0)$ at the beginning of Section~\ref{s.modified_solenoid}.
By construction~\cite{Ply80}, the Plykin attractor $A(\phi_0)$ is a transitive set,
and it basin contains the whole disk $D$ minus a finite set of periodic repellers.
By stability, these properties still hold for $A(f)$ as long as $f$ is close enough
to $f_1$. Since $w$ is clearly not a periodic point, as it is contained in the
unstable manifold of $q(f)$, the second property implies that $w$ is contained in the
basin of $A(f)$. This means that we also have a heteroclinic intersection from $q(f)$
to the transitive set $A(f)$, which contains $p(f)$.
In particular, for any $\epsilon>0$ there is an $\epsilon$-pseudo orbit from $q(f)$
to $p(f)$.

The claim follows directly from these two conclusions.
\end{proof}

We denote by $\C(f)$ the chain recurrence class in Lemma~\ref{l.uniquequasiattractor}.
It follows immediately from the definitions that it contains the homoclinic classes
$H(p(f),f)$ and $H(q(f),f)$ of both fixed points.
By a result of Bonatti, Crovisier~\cite{BoCr04},
\begin{equation}\label{eq.generic_equal}
H(p(f),f) = \C(f) = H(q(f),f)
\end{equation}
for every $f$ in a $C^1$-residual subset $\cR$ of some $C^1$-neighborhood $\cU$ of $f_1$.

\subsection{Large entropy created from a homoclinic tangency}

We are now going to explain how to create uniformly hyperbolic sets with large entropy,
by unfolding homoclinic tangencies inside the chain recurrence class $\C(f)$.
For this, we need to recall the notion of domination (see \cite[Chapter~7]{Beyond}).

Let $\Lambda$ be an $f$-invariant compact set, and $G\subset T_\Lambda M$ be a continuous
subbundle of the tangent bundle over $\Lambda$ invariant under the derivative $Df$.
Given any integer $N\ge 1$, we say that $(f,\Lambda,G)$ has an \emph{$N$-dominated splitting} if
there is a direct sum decomposition $G=E\oplus F$ into two continuous subbundles invariant
under $Df$ such that
$$
\|Df^n u\| \leq \frac1 2 \|Df^n v\|
\text{ for any $n\ge N$ and any unit vectors $u\in E$ and $v\in F$}.
$$
This is a closed property, in the following sense.
Suppose that there is an $N$-dominated splitting $G_n = E_n \oplus F_n$ for $(f_n,\Lambda_n,G_n)$,
for each $n\ge 1$, such that the dimensions of $E_n$ and $F_n$ are independent of $n$.
Suppose also that $f_n\to f$ in the $C^1$-toplogy, $\Lambda_n\to\Lambda$ in the
Hausdorff topology, and $G_n \to G$ uniformly. Then there is an $N$-dominated splitting
for $(f,\Lambda,G)$.

The next step is to create a homoclinic tangency associated to a periodic point
$r(f) \in H(p(f),f)$. This will be done by means of the following proposition, which is
contained in the combination of Gourmelon~\cite[Theorem~3.1]{Gou10} and
Gourmelon~\cite[Theorem~8]{Gou16}. See Buzzi, Crovisier, Fisher~\cite[Theorem~3.9]{BCF18}
for an analogous statement, and Pujals, Sambarino~\cite{PS00a} and Wen~\cite{Wen02} for
previous results in this direction.

$\cU$ denotes a neighborhood of $f_1$ as introduced at the end of Section~\ref{s.chain_recurrence}.

\begin{theorem}[Gourmelon]\label{t.tangency}
For any $\epsilon> 0$, there exist $N\geq 1$ and $T\geq 1$ such that the following holds.
Consider any $f\in \cU$ admitting a periodic saddle $O$ with $1$ contracting eigenvalue
and period greater than $T$, such that the restriction of $Df$ to the center-stable
space $E^{cs}_O$ does not have an $N$-dominated splitting.

Then there exists an $\epsilon$-perturbation $g$ of $f$ in the $C^1$ topology such that
\begin{itemize}
    \item $g$ coincides with $f$ outside an arbitrarily small neighborhood of $O$ and
    \item $g$ preserves $\Orb(O)$ and the derivatives of $f$ and $g$ coincide along $\Orb(O)$,
\end{itemize}
and there exists $O_0 \in \Orb(O)$ such that $W^s(O_0)$ and $W^u(O_0)$ have a tangency point
$Z \in \cF^{cs}(O_0)$ whose orbit is contained in an arbitrarily small neighborhood of $O$.
Moreover, if $O$ is homoclinically related to a periodic point $P$ of f, then $g$ may be
chosen such that $O$ remains homoclinically related to $P$ for $g$.
\end{theorem}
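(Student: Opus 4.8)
The plan is to derive Theorem~\ref{t.tangency} from the two cited results of Gourmelon: the $C^1$ generation of homoclinic tangencies in the absence of a dominated splitting along the orbit \cite[Theorem~3.1]{Gou10}, and the refinement of Franks' lemma that realizes a cocycle perturbation by a diffeomorphism preserving prescribed pieces of invariant manifolds \cite[Theorem~8]{Gou16}. Neither statement alone delivers all the required features---a tangency point $Z\in\cF^{cs}(O_0)$ whose orbit is confined to a small neighborhood of $O$, the identity $Dg=Df$ along $\Orb(O)$, and preservation of the homoclinic relation with $P$---but in combination they do, so the proof amounts to quoting them in the right order and checking that the constraints are mutually compatible.

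First I would recall the mechanism behind \cite[Theorem~3.1]{Gou10}. Given the periodic saddle $O$ with one contracting eigenvalue and period larger than a threshold $T=T(\epsilon)$, the hypothesis that $Df\mid_{E^{cs}_O}$ has no $N$-dominated splitting (for a suitable $N=N(\epsilon)$) is, via a cocycle and linear-algebra argument along the orbit, exactly what allows an arbitrarily $C^1$-small perturbation, supported in a small neighborhood of $\Orb(O)$, to bring the strong-stable direction inside $E^{cs}$ and the one-dimensional unstable direction into a degenerate position at some $O_0\in\Orb(O)$; unfolding this degeneracy produces a quadratic tangency between $W^s(O_0)$ and $W^u(O_0)$ along $\cF^{cs}(O_0)$. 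The large period provides the room needed to convert the failure of domination into an actual alignment, and, since the degeneracy is created by bending the invariant manifolds near $\Orb(O)$ rather than by altering first-order data at the orbit points, the perturbation can be performed keeping $\Orb(O)$ and the derivative along it fixed.

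Second I would invoke \cite[Theorem~8]{Gou16} to realize this perturbation by a genuine $C^1$-small diffeomorphism $g$ with the prescribed local behavior: $g$ agrees with $f$ off an arbitrarily small neighborhood of $\Orb(O)$, preserves $\Orb(O)$ with $Dg=Df$ there, and preserves the local stable and unstable manifolds of the orbits on a prescribed finite list, up to a chosen size. Including in that list both $O$ and the periodic point $P$ (in the case where $O$ and $P$ are homoclinically related for $f$), the transverse heteroclinic intersections witnessing their homoclinic relation survive for $g$: they are transverse, they lie outside the support of the perturbation, and the relevant local invariant manifolds are left undisturbed. Since the support of the perturbation is a small neighborhood of $\Orb(O)$, the whole orbit of the tangency point $Z$ produced in the first step remains in that neighborhood, and $Z$ lies in $\cF^{cs}(O_0)$ by construction.

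The main difficulty is the simultaneous compatibility of the four demands: producing the tangency forces one to move invariant manifolds near $\Orb(O)$, yet $Dg$ must be left unchanged along $\Orb(O)$, the tangency must sit inside $\cF^{cs}$, and the transverse intersections defining the homoclinic relation with $P$ must persist. Separating the manifold-bending (second-order) content of the perturbation, which creates the tangency, from the cocycle (first-order) content, which stays trivial along the orbit, and arranging both within the center-stable direction while keeping a prescribed neighborhood of the $P$-intersections frozen, is precisely the delicate bookkeeping carried out in Gourmelon's two papers; here I would simply invoke them in the indicated combination.
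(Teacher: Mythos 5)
Your proposal takes exactly the route the paper does: the statement is presented there as a quotation, "contained in the combination of" Gourmelon's \cite[Theorem~3.1]{Gou10} and \cite[Theorem~8]{Gou16}, with no further argument supplied, and your reading of how the two results interlock (the non-domination along a long orbit producing the tangency inside $\cF^{cs}$, and the refined Franks' lemma preserving $\Orb(O)$, the derivative along it, and the homoclinic relation with $P$) is consistent with that citation. Nothing further is needed.
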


Let $f\in \cU$. Observe that the homoclinic class of $p(f)$ is non-trivial, since it contains
the uniformly hyperbolic transitive set $A(f)$. Then one can find a sequence $(p_{n})_n$ of
hyperbolic periodic points whose orbits $\Orb(p_n)$ converge to $H(p(f),f)$ in the Hausdorff
topology, and which are homoclinically related to $p(f)$; the latter also means that each $p_n$
has exactly one contracting eigenvalue.

The fact that $p_n$ and $p(f)$ are homoclinically related ensures that we can find a
uniformly hyperbolic transitive set $H_n$ containing both their orbits.
Then, by the shadowing lemma, we can find a periodic point $q_n \in H_n$ homoclinically related
to both $p_n$ and $p(f)$, whose orbit spends most of the time near $p(f)$ and yet gets denser
in $H_n$ as $n$ increases. More precisely, as $n\to\infty$,
\begin{itemize}
    \item the invariant probability measure $\mu_n$ supported on $\Orb(q_n)$ converges to the
    Dirac mass $\delta_{p(f)}$
    \item and the Hausdorff distance between $H_n$ and $\Orb(q_n)$ goes to zero.
\end{itemize}
The latter implies that the sequence $\Orb(q_n)$ also converges to the whole homoclinic class
in the Hausdorff topology. For future reference, let $\kappa_n \ge 1$ denote the period of
each $q_n$, which converges to infinity as $n\to\infty$.

Finally, using the assumptions on the eigenvalues of $p(f_1)=(0,p)$, we can now prove that
the unfolding of the homoclinic tangencies given by Theorem~\ref{t.tangency} does yield
uniformly hyperbolic sets with large entropy, as stated in \eqref{eq.done_at_last}:

\begin{proposition}\label{p.largeentropy}
There is a $C^1$ open and dense subset $\cV$ of $\cU$ such that for any $f\in \cV$,
there exists a periodic point $r(f)$ homoclinically related to $p(f)$,
and there exists a uniformly hyperbolic set contained in the center-stable leaf of
$\Orb(r(f))$ whose topological entropy is larger than $\log k$.
\end{proposition}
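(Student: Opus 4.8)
The plan is to create, after a small perturbation, a homoclinic tangency associated to one of the high‑period periodic points $q_n$ introduced just above, located \emph{inside} its center‑stable leaf, and then to invoke Newhouse's analysis of the unfolding of such tangencies to manufacture a uniformly hyperbolic set of large entropy in that leaf. First I would shrink $\cU$, if necessary, so that for every $f\in\cU$ the map $Df$ restricted to $E^{cs}_{q(f)}$ has non‑real eigenvalues; this is legitimate because at $f_1$ one has $q(f_1)=(z,0)$ with $Dh_z=a\,R_\alpha$, a rotation–dilation. Let $\cR\subset\cU$ be the residual set of \eqref{eq.generic_equal}, on which $H(p(f),f)=\C(f)=H(q(f),f)$, so that in particular $q(f)\in H(p(f),f)$ for $f\in\cR$ by Lemma~\ref{l.uniquequasiattractor}. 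I would fix once and for all a small $\epsilon>0$ and let $N,T\ge1$ be the associated constants in Theorem~\ref{t.tangency}. For $f\in\cR$, the candidates for $r(f)$ are the points $q_n$ (homoclinically related to $p(f)$, of period $\kappa_n\to\infty$, with $\mu_n\to\delta_{p(f)}$ and $\Orb(q_n)\to H(p(f),f)$ in the Hausdorff topology), with $n$ chosen suitably large.

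The first key step would be to show that, for every large $n$, the restriction of $Df$ to $E^{cs}_{\Orb(q_n)}$ admits \emph{no} $N$‑dominated splitting. Since $q_n$ is homoclinically related to $p(f)$, which has stable index $1$, each $q_n$ has exactly one contracting eigenvalue, so the two eigenvalues of $Df^{\kappa_n}$ along the plane $E^{cs}_{q_n}$ are real, of moduli $|\lambda^s_n|<1<|\lambda^u_n|$. If an $N$‑dominated (hence line $\oplus$ line) splitting existed along $\Orb(q_n)$ for infinitely many $n$, then, $E^{cs}$ being a continuous bundle and $\Orb(q_n)\to H(p(f),f)$, the closedness of $N$‑dominated splittings noted above would produce an $N$‑dominated splitting of $E^{cs}$ over $H(p(f),f)$; restricting it to the fixed point $q(f)\in H(p(f),f)$ would give a $Df$‑invariant decomposition of the plane $E^{cs}_{q(f)}$ into two lines, contradicting that $Df\mid_{E^{cs}_{q(f)}}$ has non‑real eigenvalues. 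In the same breath I would record, using $\mu_n\to\delta_{p(f)}$, that $\tfrac1{\kappa_n}\log|\lambda^u_n|\to\log k^u>\log k$ and $\tfrac1{\kappa_n}\log|\lambda^u_n\lambda^s_n|=\int\log|\det(Df\mid_{E^{cs}})|\,d\mu_n\to\log(k^uk^s)>0$, so that for $n$ large $q_n$ is area expanding ($|\lambda^u_n\lambda^s_n|>1$) and $\tfrac1{\kappa_n}\log|\lambda^u_n|>\log k$.

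Fixing $n$ large enough that $\kappa_n>T$ together with the three conditions above, I would apply Theorem~\ref{t.tangency} to $O=\Orb(q_n)$ with $P=p(f)$: this yields an $\epsilon$‑perturbation $g$ of $f$, coinciding with $f$ off an arbitrarily small neighborhood of $\Orb(q_n)$ and with the same derivatives along $\Orb(q_n)$, such that $W^s(q_n)$ and $W^u(q_n)$ have a tangency point $Z\in\cF^{cs}(q_n)$ and $q_n$ remains homoclinically related to $p(g)=p(f)$. In particular $\lambda^s_n,\lambda^u_n$ are unchanged, so inside the surface $\cF^{cs}(\Orb(q_n))$ — which is invariant under the first‑return map $g^{\kappa_n}$ — the saddle $q_n$ is area expanding and carries a homoclinic tangency. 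I would then invoke Newhouse's results on the unfolding of homoclinic tangencies of area‑expanding saddles~\cite{New78a} to obtain, after an arbitrarily $C^1$‑small further perturbation $g'$ of $g$ (again supported near $\Orb(q_n)$, hence keeping $q_n$ homoclinically related to $p(g')=p(f)$), a uniformly hyperbolic set $\Gamma\subset\cF^{cs}(\Orb(q_n))$ whose topological entropy, computed for $(g')^{\kappa_n}$, is arbitrarily close to $\log|\lambda^u_n|$; since
$$
h_{top}(g'\mid_\Gamma)=\tfrac1{\kappa_n}\,h_{top}\big((g')^{\kappa_n}\mid_\Gamma\big),
$$
this makes $h_{top}(g'\mid_\Gamma)$ arbitrarily close to $\tfrac1{\kappa_n}\log|\lambda^u_n|$, hence $>\log k$. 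Being a horseshoe, $\Gamma$ is locally maximal and topologically conjugate to a fixed subshift, so it persists with locally constant entropy over a $C^1$‑neighborhood $\cW\subset\cU$ of $g'$, sitting in the center‑stable leaf of the continuation $r_h$ of $q_n$, which stays homoclinically related to the continuation of $p(f)$. Letting $\cV$ be the union of all such $\cW$ over $f\in\cR$ and the above choices gives a $C^1$‑open set, dense in $\cU$ because $\cR$ is dense and the composed perturbation from $f$ to $g$ to $g'$ can be made arbitrarily $C^1$‑small with $g'\in\cW$. For $h\in\cV$ one sets $r(h)=r_h$; the continuation of $\Gamma$ is a uniformly hyperbolic set in $\cF^{cs}(\Orb(r(h)))$ with entropy $>\log k$, and it lies in $H(r(h),h)=H(p(h),h)$, which equals $\supp\mu_h$ for $h\in\cR$ by Corollary~\ref{c.uniqueness} and \eqref{eq.generic_equal}, giving \eqref{eq.done_at_last} on $\cR\cap\cV$.

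The step I expect to be the main obstacle is the entropy estimate quoted from \cite{New78a}: one must show that unfolding the homoclinic tangency of the area‑expanding saddle $q_n$ of the return map $g^{\kappa_n}$ produces a horseshoe whose entropy \emph{for that return map} is close to $\log|\lambda^u_n|$ — i.e.\ a horseshoe with roughly $|\lambda^u_n|^{1-o(1)}$ branches — rather than, say, a two‑legged horseshoe, whose entropy for $g'$ would be of order $1/\kappa_n$ and would not beat $\log k$. This is precisely where the area expansion $|\lambda^u_n\lambda^s_n|>1$ and the largeness of $|\lambda^u_n|$ are used: the unstable manifold of $q_n$ grows in length at rate $\log|\lambda^u_n|$ under the return map, and the fold created by the tangency, together with area expansion, lets almost all of this length growth be captured by a locally maximal hyperbolic set after an arbitrarily small unfolding. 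A secondary, more bookkeeping, difficulty is to keep every perturbation localized away from $p(f)$ so that the homoclinic relation between $q_n$ (equivalently $r(h)$) and $p(f)$ survives throughout, which is what places $\Gamma$ inside $H(p(h),h)$ and, on the residual set, inside $\supp\mu_h$.
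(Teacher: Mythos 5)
Your overall architecture matches the paper's: work on the residual set where $H(p(f),f)=H(q(f),f)$, use the non-real eigenvalues of $Df\mid_{E^{cs}_{q(f)}}$ plus closedness of $N$-dominated splittings to rule out $N$-domination of $E^{cs}$ along $\Orb(q_n)$ for large $n$, apply Gourmelon's theorem to create a tangency in the center-stable leaf of $\Orb(q_n)$, and unfold it \`a la Newhouse. The gap is in your eigenvalue control. You assert that $\tfrac1{\kappa_n}\log|\lambda^u_n|\to\log k^u$ follows from $\mu_n\to\delta_{p(f)}$. It does not. The weak$^*$ convergence of the orbit measures gives you convergence of $\tfrac1{\kappa_n}\log|\lambda^u_n\lambda^s_n|$ (the determinant is a continuous additive cocycle) and, by subadditivity, only the \emph{upper} bound $\limsup_n\tfrac1{\kappa_n}\log|\lambda^u_n|\le\log k^u$. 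In the absence of a dominated splitting of $E^{cs}$ over $H(p(f),f)$ --- which is exactly the situation you are in, and which you need for the tangency step --- the return trips of $\Orb(q_n)$ away from $p(f)$ can rotate the expanding direction into the contracting one and make the two exponents collapse toward each other; all you can salvage from the determinant and $|\lambda^s_n|<1$ is $\liminf_n\tfrac1{\kappa_n}\log|\lambda^u_n|\ge\log(k^uk^s)$, and $\log(k^uk^s)$ need not exceed $\log k$ (only $k^u>k$, $k^s<1/k$, $k^uk^s>1$ are assumed). This is precisely why the paper inserts a Bochi--Bonatti perturbation $f_n$ of $f$, supported near $\Orb(q_n)$ and preserving the homoclinic relation to $p(f)$, which forces the eigenvalues to satisfy \eqref{eq.kkk}. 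Without that step your lower bound on the horseshoe entropy fails.

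A secondary point: the Newhouse estimate you invoke (entropy of the unfolded horseshoe for the return map close to $\log|\lambda^u_n|$, justified only by area expansion) is stronger than what the cited result \cite{New78a} delivers, which is entropy close to $\min\{-\tfrac1{\kappa_n}\log k^s_n,\tfrac1{\kappa_n}\log k^u_n\}$; the number of legs of the horseshoe is limited by the contracting eigenvalue as well as the expanding one. You correctly flag this as your main obstacle, but in fact you do not need the stronger statement: once the Bochi--Bonatti step gives \eqref{eq.kkk}, both $-\tfrac1{\kappa_n}\log k^s_n\to-\log k^s>\log k$ and $\tfrac1{\kappa_n}\log k^u_n\to\log k^u>\log k$, so the $\min$-version already beats $\log k$. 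The rest of your bookkeeping (localizing the perturbations away from $p(f)$, persistence of the horseshoe and of the homoclinic relation on an open set, density from the residual set) is consistent with the paper's argument.
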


\begin{proof}
It is clear that the property in the conclusion is open, and so we only need to prove that it
is dense. Take $f$ in the residual set $\cR$ where the identity \eqref{eq.generic_equal} holds.
Since both eigenvalues of $q(f)$ along the center-stable direction are non-real, the continuous
bundle $G = E^{cs}\mid_{H(p(f),f)}$ admits no dominated splitting.
By Bochi-Bonatti~\cite[Theorem~3]{BoB12}, it follows that for any $\epsilon>0$,
there is $N \ge 1$, such that for every $n\geq N$ there is an $\epsilon$-perturbation $f_n$
of $f$ in the $C^1$ topology such that
\begin{itemize}
    \item $f_n$ coincides with $f$ outside an arbitrarily small neighbourhood of $\Orb(q_n)$,
    \item $f_n$ preserves $\Orb(q_n)$ and
    \item the eigenvalues $k^s_n$ and $k^u_n$ of $Df_n^{\kappa_n}$ along the center-stable
    space of $q_n$ satisfy
\end{itemize}
\begin{equation}\label{eq.kkk}
\frac{1}{\kappa_n} \log k^s_n \to \log k^s
\text{ and }
\frac{1}{\kappa_n} \log k^u_n\to \log k^u.
\end{equation}
Keep in mind that $k^s < 1/k < k < k^u_n$. Also, by the version of Franks' lemma in Gourmelon~\cite{Gou16},
this perturbation can be made in such a way that the periodic point $q_n$ of $f_n$ is still homoclinically related to $p(f_n)$.

Now, since $\Orb(q_n)$ converges to $H(p(f),f)$ in the Hausdorff topology and,
as we just observed, the center-stable bundle $E^{cs}$ has no domination over the homoclinic
class, we get that any domination of $E^{cs}$ over the orbit of $q_n$ has to become
arbitrarily weak as $n\to\infty$. More precisely, for any $N \ge 1$ and any $n$ large enough,
the bundle $E^{cs} \mid_{\Orb(q_n)}$ for the map $f_n$ has no $N$-dominated splitting.
So we may apply Theorem~\ref{t.tangency} to obtain an embedding $g_n$ near $f_n$,
such that $g_n$ has a homoclinic tangency on the center-stable leaf of $\Orb(q_n)$.

Finally, by a result of Newhouse~\cite{New78a}, a suitable unfolding of the homoclinic
tangency yields a small uniformly hyperbolic transitive set with topological entropy close to
$$
\min\left\{-\frac{1}{\kappa_n}\log k^s_n,\frac{1}{\kappa_n} \log k^u_n \right\}.
$$
By \eqref{eq.kkk}, it follows that the topological entropy is larger than $\log k$
if $n$ is large.
\end{proof}

\appendix

\section{Ergodic results for $C^1$ partially hyperbolic diffeomorphisms}

Results in the ergodic theory of non-uniformly hyperbolic diffeomorphims are often stated
under the assumption that the derivative is H\"older. Here we prove certain
extensions to $C^1$ partially hyperbolic diffeomorphisms that are used in our arguments.

\subsection{Katok shadowing lemma for $C^1$ diffeomorphisms with a dominated splitting}\label{a.katok}

This section is devoted to the prof of Lemma~\ref{l.katok_shadowing_lemma}.

\begin{lemma}\label{l.katok_shadowing_lemma_bis}
Let $f:M\to M$ be a $C^1$ diffeomorphism and $\mu$ be an ergodic hyperbolic probability measure.
Assume that the Oseledets splitting $O^s \oplus O^u$ extends to a dominated splitting of the
tangent bundle over the support of $\mu$.
Then $\mu$ is the weak$^*$ limit of a sequence of invariant measures $\mu_n$ supported on the
orbits of hyperbolic periodic points $p_n$ such that the sequence $(p_n)_n$ converges to $\supp\mu$,
each $p_n$ has $\dim O^s$ contracting eigenvalues, and their stable manifolds have uniform size.
\end{lemma}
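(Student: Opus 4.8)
The plan is to follow the strategy of Katok's closing lemma, replacing the use of $C^{1+\alpha}$ Pesin theory by the Liao--Pliss machinery available under domination. Fix $\lambda>0$ and, passing to a power of $f$ if necessary (which preserves the dominated splitting and all conclusions of the lemma, up to the obvious reindexing of periods), assume that $\int\log\|Df\mid_{E}\|\,d\mu<-\lambda$ and $\int\log\|Df^{-1}\mid_{F}\|\,d\mu<-\lambda$, where $E=O^s$ and $F=O^u$ denote the two subbundles of the dominated splitting that extends the Oseledets splitting over the compact set $\supp\mu$. Since domination is an open condition, there is a neighborhood $U$ of $\supp\mu$ carrying a $Df$-invariant cone field $\cC^E$ around (the extension of) $E$ and a $Df^{-1}$-invariant cone field $\cC^F$ around $F$; these cones persist along nearby orbits and under small $C^1$-perturbations. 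First I would record the quantitative consequence of the Pliss lemma: for $\mu$-almost every $x$ there is a positive lower density of integers $n$ that are simultaneously \emph{forward $\lambda$-hyperbolic times} for $x$ along $E$ and \emph{backward $\lambda$-hyperbolic times} for $f^n(x)$ along $F$, meaning $\prod_{j=k}^{n-1}\|Df\mid_{E_{f^j x}}\|\le e^{-\lambda(n-k)}$ for every $0\le k<n$, together with the dual estimate for $F$ along the backward orbit of $f^n(x)$.

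Next I would combine recurrence and typicality. A standard argument using the Pliss lemma produces a compact set $\Lambda$ with $\mu(\Lambda)>0$ and an integer $N_0$ such that whenever $x\in\Lambda$ and $f^n(x)\in\Lambda$ with $n\ge N_0$, the orbit segment $x,f(x),\dots,f^n(x)$ is uniformly $\lambda$-hyperbolic with respect to $\cC^E$ (forward) and $\cC^F$ (backward). By the ergodic theorem, for $\mu$-almost every $x\in\Lambda$ the return times to $\Lambda$ have positive lower density and the empirical measures $\frac1N\sum_{j=0}^{N-1}\delta_{f^j(x)}$ converge to $\mu$; by Poincar\'e recurrence applied inside $\Lambda$ we may in addition require $f^N(x)$ arbitrarily close to $x$. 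Fixing such an $x$ and a large return time $N$, the segment $x,\dots,f^N(x)$ with $f^N(x)\approx x$ is a $\varepsilon$-pseudo-orbit of period $N$ that is uniformly $\lambda$-hyperbolic relative to $\cC^E,\cC^F$, and its empirical measure is within a prescribed error of $\mu$. Applying the shadowing lemma for dominated pseudo-orbits --- the $C^1$ analogue of Katok's closing, obtained by a hyperbolic graph-transform/fixed-point argument that uses only the invariant cone fields and the uniform rate $\lambda$ --- I obtain a genuine periodic point $p_n$ of period $N$ whose orbit $\varepsilon'$-shadows the segment and remains in $U$.

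Finally I would read off the conclusions. Because $\Orb(p_n)\subset U$ and $Df^{N}(p_n)$ expands $\cC^F$ and contracts $\cC^E$ with the uniform rate $\lambda$, the point $p_n$ is a hyperbolic saddle with exactly $\dim E=\dim O^s$ contracting eigenvalues; since $\lambda$ does not depend on $n$, the graph-transform construction also yields local stable manifolds $W^s_{\loc}(p_n)$ of size bounded below uniformly in $n$. Letting $\varepsilon\to0$ and the recurrence improve, the points $p_n$ converge to $\supp\mu$ in the Hausdorff distance (the shadowed segments become dense in $\supp\mu$), and the orbit measures $\mu_n$ converge weak$^*$ to $\mu$ (the empirical measures of the segments converge to $\mu$ and shadowing displaces them by at most $\varepsilon'$). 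This is precisely the assertion of Lemma~\ref{l.katok_shadowing_lemma_bis}.

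The step I expect to be the main obstacle is the $C^1$ closing/shadowing for dominated pseudo-orbits: lacking $C^{1+\alpha}$ regularity one cannot invoke Pesin blocks, Hölder continuity of invariant manifolds, or absolute continuity, so the closing must be carried out entirely with the cone fields and the Pliss-type uniform estimates. The delicate points there are checking that the graph transform remains a contraction on the appropriate space of orbit segments lying near the pseudo-orbit, and that the resulting fixed point has local stable manifold of the claimed uniform size. Everything else --- the Pliss lemma, Poincar\'e recurrence, the (sub)additive ergodic theorem, and the weak$^*$ and Hausdorff convergences --- is routine once this closing statement is established.
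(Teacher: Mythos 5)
Your proposal is correct and follows essentially the same route as the paper: produce, via Pliss-type estimates and Poincar\'e recurrence, a nearly closed orbit segment that is uniformly forward-contracting along $O^s$ and backward-contracting along $O^u$ (a quasi-hyperbolic string), close it to a genuine periodic orbit, and read off the index, the uniform stable manifolds, and the weak$^*$ convergence from the uniform rates and the Birkhoff genericity of the base point. The one step you flag as the main obstacle --- the $C^1$ closing of dominated quasi-hyperbolic pseudo-orbits --- is exactly Liao's shadowing lemma (see also Gan's version), which the paper invokes as a citation rather than reproving, so no graph-transform argument needs to be carried out from scratch.
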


Let $E \oplus F$ denote the dominated splitting of that extends the Oseledets splitting of $f$ on the support of $\mu$.
Since the Lyapunov exponents of $\mu$ along the sub-bundle $E$ are all negative, there are  $a < 0$ and $N_s \ge 1$
such that
$$
\frac{1}{N_s} \int_M \log \|Df^{N_s}\mid_{E}\| \, d\mu < a.
$$
Then there exists some ergodic component $\mu_s$ of $\mu$ for the iterate $f^{N_s}$ such that
\begin{equation}\label{eq.Econtraction}
    \frac{1}{N_s} \int_M \log \|Df^{N_s}\mid_{E}\| \, d\mu_s < a.
\end{equation}
We claim that there exists a set $\Lambda^s \subset M$ with positive $\mu_s$-measure such that
\begin{equation}\label{eq.shyperbolictime}
    \prod_{i=0}^{n-1}\|Df^{N_s}\mid_{E(f^{iN_s}(x))}\| < e^{anN_s}.
\end{equation}
for any $x \in \Lambda^s$ and $n\geq 1$.
Indeed, suppose that for $\mu_s$-almost every $x$ there is $n_x \ge 1$ such that
$$
\prod_{i=0}^{n_x-1}\|Df^{N_s}\mid_{E(f^{iN_s}(x))}\|\geq e^{an_xN_s}.
$$
Then, still for $\mu_s$-almost every $x$,
$$
\limsup_n \frac{1}{n N_s}\sum_{i=0}^{n-1} \log\|Df^{N_s}\mid_{E(f^{iN_s}(x))}\|\geq a,
$$
which, in  of the Birkhoff ergodic theorem, contradicts \eqref{eq.Econtraction}.
This contradiction proves the claim \eqref{eq.shyperbolictime}.
The same argument proves that there are $b<0$, $N_u\ge 1$, an ergodic component $\mu_u$ of $\mu$
and a set $\Lambda^u \subset M$ with positive $\mu_u$-measure such that
\begin{equation}\label{eq.uhyperbolictime}
    \prod_{i=0}^{n-1}\|Df^{-N_u}\mid_{F(f^{-iN_u}(x))}\| < e^{bnN_u}
\end{equation}
for any $x\in \Lambda^u$ and any $n\geq 1$.

Since $\mu$ is ergodic for $f$, it has finitely many ergodic components for both $f^{N_s}$ and $f^{N_u}$.
Thus, the previous construction implies that both $\Lambda^s$ and $\Lambda^u$ have positive $\mu$-measure.
Using ergodicity once more, it follows that there is $m \ge 1$ such that $\Lambda_0=f^m(\Lambda^u) \cap \Lambda^s$
has positive $\mu$-measure.
Moreover, by subadditivity, every sufficiently large multiple $N \ge 1$ of $N_s N_u$ satisfies
\begin{equation}\label{eq.sbothhyperbolic}
\frac{1}{n}\sum_{i=0}^{n-1} \log\|Df^{N}\mid_{E(f^{iN}(x))}\| < tN_ua<0
\end{equation}
and
\begin{equation}\label{eq.ubothhyperbolictime}
\frac{1}{n}\sum_{i=0}^{n-1} \log\|Df^{-N}\mid_{F(f^{-iN}(x))}\| < \frac{tN_sb}{2}<0.
\end{equation}
for any $x\in \Lambda_0$ and $n \ge 1$. The reason we need to take $N$ sufficiently large is to compensate for the
fact that in \eqref{eq.ubothhyperbolictime} time is shifted by $m$ steps relative to \eqref{eq.uhyperbolictime}.

By Poincar\'e recurrence, for $\mu$-almost every $x\in \Lambda_0$ there is $q_x \ge 1$ arbitrarily large such that
$f^{q_x N}(x) \in \Lambda_0$ and is arbitrarily close to $x$.
Then $x$ satisfies \eqref{eq.sbothhyperbolic} for $n\in \{1, \dots, q_x\}$ and $f^{q_xN}(x)$ satisfies
\eqref{eq.ubothhyperbolictime} for $n \in \{1, \dots, q_x\}$.
This means that $\{f^{iN}(x): i=0, \dots, q_x\}$ is a \emph{quasi-hyperbolic string} in the sense of Liao~\cite{Lia89}:
this means that $E$ is forward contracting and $F$ is backward contracting along this orbit segment.
Then by the Liao~\cite{Lia89} shadowing lemma (see also Gan~\cite{Gan02}), there is a periodic point $p$
of period $q_x$ for $f^N$ close to $x$ and whose orbit shadows the pseudo-orbit $\{x, f^N(x),\cdots, f^{(q_x-1)N}(x)\}$.
Indeed, there exists $L>0$ depending only on $f^N$, $N_ua$ and $N_sb$, such that the
\begin{equation}\label{eq.distance_above}
d(f^{iN}(x),f^{iN}(p)) \le L d(x,f^{q_xN}(x)) \text{ for every } i=0, \dots, q_x.
\end{equation}
Since the distance on the right hand side may be chosen arbitrarily small, we may conclude that the inequalities
\eqref{eq.sbothhyperbolic} and \eqref{eq.ubothhyperbolictime} are inherited by the point $p$.
Observe also that, as $N$ has been fixed, up to increasing $L$ if necessary we may now assume that \eqref{eq.distance_above}
remains true for every iterate $f^j$ with $j=0, \dots, q_xN$.

By \cite{HPS77}, in the presence of a dominated splitting $E \oplus F$ there exist continuous families $\cF^E_{loc}(x)$ and $\cF^F_{loc}(x)$ of local sub-manifolds tangent to the invariant sub-bundles $E$
and $F$, respectively, which are locally invariant: there exists $r_0>0$ such that
\begin{equation}\label{eq.localinvariant}
f(\cF^E_{r_0}(x))\subset \cF^E_{loc}(f(x))
\text{ and }
f^{-1}(\cF^F_{r_0}(x))\subset \cF^F_{loc}(f^{-1}(x)),
\end{equation}
where $\cF^E_{r_0}(x)$ and $\cF^F_{r_0}(x)$ denote the $r_0$-neighborhoods of $x$ inside $\cF^E_{loc}(x)$
and $\cF^F_{loc}(x)$, respectively, relative to the respective leaf distances.
These are usually referred to as \emph{fake foliations}, see \cite{BW10,LVY13}.

The next statement was proved by Alves, Bonatti, Viana~\cite[Lemma~2.7]{ABV00},
see also Dolgopyat~\cite[Lemma 8.1]{Dol00}.

\begin{lemma}\label{l.stablemanifold}
Let $\lambda>0$ and $\epsilon\in(0,r_0)$. If $x\in M$ and $n \geq 1$ are such
that $\|(DF^j \mid_{E^{cs}})(x)\| \leq e^{-\lambda j}$ for $j=1, \dots, n$, then
$$
F^j(\cF^{cs}_{\epsilon}(x)) \subset \cF^{cs}_{r_j}(F^j(x))
$$
for every $0 \leq j \leq n$, where $r_j = \epsilon e^{-\lambda j/2}$.
\end{lemma}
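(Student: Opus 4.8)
The plan is to prove the inclusion by induction on $j$, keeping track at each step of the leaf-diameter $d_j$ of the iterated plaque $F^j(\cF^{cs}_\epsilon(x))$ measured inside the fake leaf through $F^j(x)$, and showing that $d_j\le r_j=\epsilon e^{-\lambda j/2}$. The base case $j=0$ is trivial, since $d_0=\epsilon<r_0$. For the inductive step one uses the local invariance of the fake foliation recorded in \eqref{eq.localinvariant}: as long as $d_i\le r_i\le\epsilon<r_0$ for every $i\le j$, the image $F^{i+1}(\cF^{cs}_\epsilon(x))$ is indeed contained in the fake leaf through $F^{i+1}(x)$, so that the $d_i$ are well defined and the iterated plaque genuinely stays inside the fake leaves. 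This self-consistency is what makes the inductive scheme legitimate.

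The analytic ingredient is a bounded-distortion estimate along the fake leaves. Using compactness of $M$, continuity of $Df$ and of the subbundle $E^{cs}$, and the fact (from \cite{HPS77}) that the fake leaves depend continuously in the $C^1$ topology on their base point, one first shrinks $r_0$ so that there is $\rho\in(0,r_0)$ with the property that whenever two points of a common fake leaf are at leaf-distance at most $r_0$, the norms of $DF$ restricted to the tangent spaces of that leaf at the two points differ by a factor at most $e^{\lambda/2}$; when $\dim E^{cs}=1$ this is literally a comparison of one-step derivatives, and in general it is a comparison of the products $\prod_i\|DF\mid_{E^{cs}(F^i(\cdot))}\|$ along a nearby orbit with the corresponding product along $\Orb(x)$, which is exactly the quantity controlled by the hypothesis of the lemma. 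Shrinking $r_0$ a little further, any plaque of leaf-diameter at most $r_0$ remains, under each of the first $n$ iterates, inside this $\rho$-neighborhood of $\Orb(x)$ — and here the inductive bound $d_i\le r_i\le\epsilon<r_0$ is precisely what guarantees we never leave it.

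Granting this, for any $z$ in the $\epsilon$-plaque the mean value theorem along the fake leaf gives
$$
d_{j+1}\ \le\ \Bigl(\sup_z\bigl\|DF^{j+1}\mid_{T_z\cF^{cs}}\bigr\|\Bigr)\,\epsilon\ \le\ e^{\lambda(j+1)/2}\,\bigl\|(DF^{j+1}\mid_{E^{cs}})(x)\bigr\|\,\epsilon\ \le\ e^{\lambda(j+1)/2}\,e^{-\lambda(j+1)}\,\epsilon\ =\ r_{j+1},
$$
where the middle inequality is the per-step distortion bound compounded over $j+1$ iterates and the last step is the hypothesis $\bigl\|(DF^{j+1}\mid_{E^{cs}})(x)\bigr\|\le e^{-\lambda(j+1)}$. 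This closes the induction and yields $F^j(\cF^{cs}_\epsilon(x))\subset\cF^{cs}_{r_j}(F^j(x))$ for all $0\le j\le n$.

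I expect the main obstacle to be exactly this compounding of the distortion estimate: one must be sure that remaining within a fixed small neighborhood of $\Orb(x)$ at every intermediate iterate — which a priori is only known through the induction, since it rests on the conclusion $d_i\le r_i$ itself — costs no more than the factor $e^{\lambda/2}$ per step (absorbing also the routine constant losses coming from comparing leaf-distance with ambient distance and diameter with radius), so that the accumulated loss $e^{\lambda j/2}$ is still beaten by the contraction $e^{-\lambda j}$ provided by the hypothesis. This is where the freedom, in \cite{HPS77}, to take the size $r_0$ of the fake leaves as small as one wishes, together with the exponential decay of the radii $r_j$, is essential.
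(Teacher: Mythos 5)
Your argument is correct in substance and is exactly the standard one: the paper itself offers no proof of this lemma, deferring to \cite[Lemma~2.7]{ABV00} and \cite[Lemma~8.1]{Dol00}, and the scheme there is precisely your induction on $j$ combined with local invariance of the fake foliation and a uniform-continuity (bounded distortion) comparison of the one-step derivatives along the nearby orbit, costing a factor $e^{\lambda/2}$ per step that is absorbed by the contraction $e^{-\lambda}$. One point deserves care, and it is where your justification is slightly loose: when $\dim E^{cs}\ge 2$, the quantity your per-step comparison actually controls is the product $\prod_{i=0}^{j}\|DF\mid_{E^{cs}(F^i(x))}\|$ of one-step norms, which dominates but is in general strictly larger than the composite norm $\|(DF^{j+1}\mid_{E^{cs}})(x)\|$ appearing in the stated hypothesis; so the chain of inequalities as you display it does not literally follow from the hypothesis as written, and indeed the lemma with the composite-norm hypothesis can fail in higher center-stable dimension. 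The correct hypothesis (and the one in \cite{ABV00}) is the product condition, which is also what the paper actually verifies whenever it invokes the lemma (cf.\ \eqref{eq.sbothhyperbolic} and \eqref{eq.shyperbolictime}); with that reading your proof is complete.
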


Let $(p_n)_n \to x$ be a sequence of periodic points obtained in this way, for values $(q_n)_n$ of $q_x \ge 1$ going to
infinity and such that $f^{q_n N}(x)$ converges to $x$.
By Lemma~\ref{l.stablemanifold}, these periodic points have stable manifolds with size uniformly bounded from below
(the bound depends only on $f^N$ and $N_u a$). Clearly, we may take $x$ such that
$$
\frac{1}{k-1} \sum_{j=0}^{k-1} \delta_f^{j}(x) \to \mu
$$
as $k \to \infty$. Then, using the inequality \eqref{eq.distance_above} for each iterate $f^j$ with $j=0, \dots, q_nN$,
we get that
$$
\frac{1}{q_n N}\sum_{j=0}^{q_n N-1} \delta_{f^j(x)}
$$
converges to $\mu$ as $n\to\infty$. This completes the proof of Lemma~\ref{l.katok_shadowing_lemma}.

\subsection{Pesin stable manifold theorem for $C^1$ diffeomorphims with a dominated splitting}

The Pesin stable manifold theorem for non-uniformly hyperbolic systems is usually not true in the
$C^1$ category (see Pugh~\cite{Pug84}). The partial version we give here, for systems with a dominated
splitting, is probably known to the experts, but we could not find it in the literature.

Non-uniform hyperbolicity ensures that, for almost every point, a neighborhood of $x$ inside the fake leaf $\cF_{loc}^E(x)$ (respectively, $\cF_{loc}^F(x)$) is exponentially contracted
under forward (respectively, backward) iteration.
Note that these local Pesin stable and unstable manifolds turn out to vary continuously with the point $x$,
in this setting, except for their size which is only a measurable function of the point.

We formalize the conclusion in the next lemma.
Let $d^E_x(\cdot,\cdot)$ denote the leaf distance inside each $\cF^E_{loc}(x)$.

\begin{lemma}\label{l.stablemanifol}
Under the hypotheses of Lemma~\ref{l.katok_shadowing_lemma}, there are $\lambda <0$ and a full $\mu$-measure
invariant set $\Lambda$ such that for every $x\in \Lambda$, there are $C(x) >0$ and a neighborhood
$\cW^s_{loc}(x)$ of $x$ inside the  leaf $\cF^E_{loc}(x)$ satisfying:
\begin{enumerate}
    \item $f(\cW^s_{loc}(x))\subset \cW^s_{loc}(f(x))$ for every $x \in \Lambda$ and
    \item for any $y,z\in \cW^s_{loc}(x)$ and any $n\geq 0$,
    $$
    d^E_{f^n(x)}(f^n(y),f^n(z)) \leq C(x)e^{n\lambda} d^E_x(y,z)
    $$
    \end{enumerate}
for every $x\in\Lambda$. Moreover, $C(f(x))\leq C(x)\|Df\|$ for every $x\in\Lambda$,
and the map $x \mapsto \cW^s_{loc}(x)$ is measurable.
\end{lemma}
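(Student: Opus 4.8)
The plan is to obtain the local stable manifolds not by constructing them from scratch --- which is exactly what fails in the merely $C^1$ category --- but by carving them out of the locally invariant plaque family tangent to $O^s$ that the dominated splitting already provides. Write $E=O^s$ and $F=O^u$, so that $E\oplus F$ is the dominated extension over $\supp\mu$. Since $\mu$ is ergodic and hyperbolic and $E$ carries only negative Lyapunov exponents, the top exponent along $E$ is a negative constant $\lambda_E$; fix $\lambda$ with $\lambda_E/2<\lambda<0$, the slack being needed to absorb the drift of the plaques. Let $\{\cF^E_{\loc}(x):x\in\supp\mu\}$ be the continuous family of $C^1$ local submanifolds tangent to $E$, of uniform size $r_0$, furnished by \cite{HPS77} and satisfying the local invariance \eqref{eq.localinvariant}.

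First I would record the non-uniform contraction of $Df$ along $E$: by Oseledets' theorem and the Birkhoff ergodic theorem, $\tfrac{1}{n}\log\|Df^n\mid_{E(x)}\|\to\lambda_E$ for $\mu$-almost every $x$, so the function $C_0(x):=\sup_{n\ge0}\|Df^n\mid_{E(x)}\|\,e^{-2n\lambda}$ is finite $\mu$-almost everywhere and measurable, and submultiplicativity of $n\mapsto\|Df^n\mid_{E(x)}\|$ along the orbit controls how $C_0$ varies under $f$. Let $\Lambda$ be the full-measure invariant set on which $C_0$ is finite. For each $x\in\Lambda$ one then has the composition-norm bound $\|Df^n\mid_{E(f^k(x))}\|\le C_0(x)\,\|Df\|^k\,e^{2n\lambda}$ for all $k,n\ge0$, which, after absorbing the constant $C_0(x)$ by shrinking the initial plaque to radius of order $\delta_0/C_0(x)$ for a uniform $\delta_0>0$, is exactly the input needed for the plaque-drift estimate of Lemma~\ref{l.stablemanifold} (compare Alves--Bonatti--Viana~\cite[Lemma~2.7]{ABV00} and Dolgopyat~\cite[Lemma~8.1]{Dol00}). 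Declaring $\cW^s_{\loc}(x)$ to be this sub-plaque of $\cF^E_{\loc}(x)$, that estimate yields both the forward invariance $f(\cW^s_{\loc}(x))\subset\cW^s_{\loc}(f(x))$ and the exponential bound $d^E_{f^n(x)}(f^n(y),f^n(z))\le C(x)\,e^{n\lambda}\,d^E_x(y,z)$ for all $n\ge0$ and $y,z\in\cW^s_{\loc}(x)$, with $C(x)$ a measurable function assembled from $C_0(x)$ and the uniform constants.

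It remains to observe that the inequality $C(f(x))\le C(x)\|Df\|$ follows by tracking a single orbit step in the definitions of $C_0$ and $C$, and that $x\mapsto\cW^s_{\loc}(x)$ is measurable, which is clear because its radius $\delta_0/C_0(x)$ is a measurable function of $x$ while the plaque family $x\mapsto\cF^E_{\loc}(x)$ is continuous. The one genuinely delicate point, and the only place where the $C^1$ (as opposed to $C^{1+\alpha}$) hypothesis is felt, is the iteration step: the plaque $\cF^E_{\loc}(x)$ is tangent to $E$ only at $x$ itself, so applying $f$ drags it off the $E$-direction at nearby points, and this drift must be dominated by the contraction along $E$. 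Quantifying this is precisely the content of the cited Alves--Bonatti--Viana type lemma, which is why one can only propagate the estimate with a rate $\lambda$ roughly half of $\lambda_E$ and with plaque radius shrinking like $1/C_0(x)$; the rest of the argument is routine bookkeeping.
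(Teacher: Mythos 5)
Your strategy---carving the stable manifolds out of the locally invariant plaque family tangent to $E=O^s$ using a measurable Oseledets constant $C_0(x)$ and a plaque radius of order $1/C_0(x)$---is the classical Pesin-style route, and it is genuinely different from the paper's. The paper never works with a tempered constant at a typical point: inside the proof of Lemma~\ref{l.katok_shadowing_lemma} it produces a \emph{positive}-measure set $\Lambda_0$ on which the contraction estimate \eqref{eq.sbothhyperbolic} holds with constant $1$ at \emph{all} forward times, applies Lemma~\ref{l.stablemanifold} only at those points to obtain stable plaques of \emph{uniform} size $r$, and then sets $\cW^s_{\loc}(f^i(x))=f^i(\cF^E_r(x))$ for $0\le i<n_x$ along the orbit segment between consecutive returns of $x$ to $\Lambda_0$; forward invariance is then tautological inside a segment and follows at return times from the accumulated contraction. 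A first, repairable, problem with your version is that Lemma~\ref{l.stablemanifold} assumes $\|Df^j\mid_{E(x)}\|\le e^{-\lambda j}$ for $j=1,\dots,n$ with constant $1$; the bound $\|Df^n\mid_{E(x)}\|\le C_0(x)e^{2n\lambda}$ does not satisfy this hypothesis, and shrinking the initial radius does not make it hold. One must redo the induction behind that lemma: during the first $\sim\log C_0(x)/(2|\lambda|)$ iterates no contraction is guaranteed, the plaque may expand by up to $\|Df\|^j$, and keeping it inside the region where $Df\mid_E$ is uniformly comparable to its values along the orbit of $x$ forces an initial radius of order $r_0\,C_0(x)^{-\kappa}$ for some $\kappa$ depending on $\|Df\|$ and $\lambda$, not $\delta_0/C_0(x)$.

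The genuine gap is property (1). With $\cW^s_{\loc}(x)$ defined as the plaque of radius $\delta_0/C_0(x)$, the inclusion $f(\cW^s_{\loc}(x))\subset\cW^s_{\loc}(f(x))$ is not implied by anything you prove. From $Df^{n}\mid_{E(f(x))}=Df^{n+1}\mid_{E(x)}\circ\bigl(Df\mid_{E(x)}\bigr)^{-1}$ one only gets $C_0(f(x))\le C_0(x)\,e^{2\lambda}\|Df^{-1}\|$ (note $\|Df^{-1}\|$, not $\|Df\|$, so your composition-norm bound is also off), and this permits $C_0$ to \emph{grow} by a factor close to $\|Df^{-1}\|$ in one step. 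Hence the plaque at $f(x)$ may be smaller than the $f$-image of the plaque at $x$ by a factor of order $\bigl(\|Df\|\,\|Df^{-1}\|\bigr)^{-1}$, and the invariance fails for the naive definition. To close this you need either a tempering step (replace $C_0$ by a majorant varying by at most $e^{|\lambda|/2}$ per iterate, which in turn requires checking that $\frac1n\log C_0(f^n(x))\to 0$ almost everywhere), or a dynamical definition of $\cW^s_{\loc}(x)$ as the set of $y$ whose forward orbit stays $\epsilon(x)e^{n\lambda/2}$-close to that of $x$, or the paper's device of pushing forward uniform plaques from $\Lambda_0$ along first-return segments. As written, one of the two assertions of the lemma is unproved.
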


\begin{proof}
Let $\Lambda_0$ be a positive $\mu$-measure set of points  satisfying \eqref{eq.sbothhyperbolic} for $f^N$.
Then, by Lemma~\ref{l.stablemanifold}, there are $r>0$ and $\lambda_E<0$ such that
\begin{equation}\label{eq.contrai1}
d^E_{f^{nN}(x)}(f^{nN}(y), f^{nN}(z)) \leq e^{n\lambda_E} d^E_x(y,z)
\end{equation}
for any $y,z \in \cF^E_r(x)$ and $n\geq 0$.
Since $\|Df\|$ is uniformly bounded, it follows that there exists $N_E\ge 1$ such that
\begin{equation}\label{eq.contrai2}
d^E_{f^{m}(x)}(f^{m}(y), f^{m}(z)) \leq d^E_x(y,z)
\text{ for every } m \ge N_E.
\end{equation}
Define $\lambda = \lambda_E/N$ and take $\Lambda$ to be the (full $\mu$-measure) set of points whose forward
and backward orbits visit $\Lambda_0$ infinitely many times.
For any $x\in\Lambda_0$, let $n_x \ge 1$ be its first-return time to $\Lambda_0$.
It is no restriction to assume that $n_x \ge N$ for every $x \in \Lambda_0$: just reduce $\Lambda_0$
if necessary, observing that (by ergodicity) this does not affect $\Lambda$.
Define
$$
\cW^s_{loc}(f^i(x)) = f^{i}(\cF^E_r(x)) \text{ for } i = 0, \dots, n_x-1 \text{ and } x \in \Lambda_0.
$$
It is clear that $W^s_{loc}(y)$ depends measurably on $y \in \Lambda$.
The claim (1) in the statement is an immediate consequence of the definition, except possibly at the returns
$n_x$ to $\Lambda_0$, where it follows from \eqref{eq.contrai2} together with the fact that $n_x \ge N$.
Since $\|Df\|$ is uniformly bounded, property \eqref{eq.contrai1} also implies that for each $x \in \Lambda$
one can find $C(x)>1$ such that
\begin{equation}\label{eq.contrai3}
d^E_{f^{m}(x)}(f^{m}(y), f^{m}(z)) \leq  C(x) e^{n\lambda} d^E_x(y,z)
\end{equation}
for any $y,z \in \cF^E_r(x)$ and $n\geq 0$. This is claim (2) in the statement, and it is easy to check
that $C(x)$ may be chosen such that $C(f(x)) \le C(x) \|Df\|$.
\end{proof}

\subsection{Hyperbolic measures of $C^1$ partially hyperbolic diffeomorphisms}

Finally, we prove a $C^1$-version of a result of Ledrappier~\cite{Led84a}: the original
statement assumes that the derivative is H\"older, whereas in our statement the
diffeomorphism is taken to be partially hyperbolic.
For the statement, we must recall the definition of $u$-entropy for diffeomorphisms
that need not factor to an Anosov automorphism.

Let $f: M\to M$ be a $C^1$ partially hyperbolic diffeomorphism and $\mu$ be an ergodic
measure of $f$. A measurable partition $\xi$ of $M$ is said to be \emph{$\mu$-subordinate}
to the strong-unstable foliation $\cF^{uu}$ if for $\mu$-almost every $x$:
\begin{itemize}
    \item[(1)] $\xi(x)\subset \cF^{uu}(x)$ and it has uniformly small diameter inside $\cF^{uu}(x)$;
    \item[(2)] $\xi(x)$ contains an open neighborhood of $x$ inside the leaf $\cF^{uu}(x)$;
    \item[(3)] $\xi$ is an increasing partition, meaning that $f(\xi) \prec \xi$.
\end{itemize}
Then the \emph{$u$-entropy} of $\mu$ is defined by
$$
h_\mu(f,\cF^{uu})=H_\mu(\xi\mid f(\xi)).
$$
The definition does not depend on the choice of $\xi$ (see Ledrappier, Young~\cite[Lemma~3.1.2]{LeY85a}).

These notions go back to Ledrappier, Strelcyn~\cite{LeS82}, who proved that every
non-uniformly hyperbolic diffeomorphism with H\"older derivative admits measurable partitions subordinate to the corresponding Pesin unstable lamination.
In our present setting, partitions subordinate to the strong-unstable foliation of a
partially hyperbolic diffeomorphism were constructed by Yang~\cite[Lemma~3.2]{Yan16}.

\begin{proposition}\label{p.uentropyandentropy}
Let $f: M\to M$ be a $C^1$ partially hyperbolic diffeomorphism,
and $\mu$ be an ergodic measure of $f$ whose center Lyapunov exponents are all negative.
Then $h_\mu(f)=h_\mu(f,\cF^{uu})$.
\end{proposition}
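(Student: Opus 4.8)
The inequality $h_\mu(f,\cF^{uu})\le h_\mu(f)$ is \eqref{eq.two_entropies}, so it suffices to prove $h_\mu(f)\le h_\mu(f,\cF^{uu})$. Observe first that, since the center Lyapunov exponents of $\mu$ are all negative and $Df\mid_{E^{uu}}$ is uniformly expanding, the partially hyperbolic splitting $E^{cs}\oplus E^{uu}$ is, at $\mu$-almost every point, exactly the Oseledets decomposition into the negative and the positive exponents. As $\cF^{uu}$ is the unique foliation tangent to $E^{uu}$, the Pesin unstable manifold of $\mu$-almost every $x$ is an open subset of $\cF^{uu}(x)$, so $h_\mu(f,\cF^{uu})$ coincides with the entropy of $\mu$ along its Pesin unstable lamination. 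Thus the proposition amounts to the statement, going back to Ledrappier~\cite{Led84a}, that the metric entropy is entirely carried by the unstable lamination; our task is to establish this in the $C^1$ category, using the dominated splitting in place of the H\"older regularity of Ledrappier's original argument.

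The plan is to follow Ledrappier's scheme, replacing the classical non-uniformly hyperbolic ingredients by the $C^1$ versions from the previous two subsections. Since all center Lyapunov exponents of $\mu$ are negative, Lemma~\ref{l.stablemanifol} provides, for $\mu$-almost every $x$, a Pesin local stable manifold $\cW^s_{\loc}(x)$ tangent to $E^{cs}$, of dimension $\dim E^{cs}$, uniformly exponentially contracted by forward iteration and varying measurably with $x$; since $\dim E^{cs}+\dim E^{uu}=\dim M$, these disks together with the strong-unstable leaves yield a local product structure at $\mu$-almost every point. Following Ledrappier, Strelcyn~\cite{LeS82} and Yang~\cite{Yan16}, I would fix a measurable partition $\xi^u$ that is $\mu$-subordinate to $\cF^{uu}$, increasing, of finite entropy, and with $\bigvee_{n\ge 0}f^{-n}\xi^u=\epsilon\pmod 0$, so that $h_\mu(f,\cF^{uu})=H_\mu(f^{-1}\xi^u\mid\xi^u)$. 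Given a finite partition $\cP$ of $M$ with $\mu(\partial\cP)=0$, one has $h_\mu(f,\cP)\le h_\mu(f,\cP\vee\xi^u)$, and the crux is to show $h_\mu(f,\cP\vee\xi^u)=H_\mu(f^{-1}\xi^u\mid\xi^u)$. By standard manipulations of conditional entropy together with Rokhlin's formula, the difference between these two quantities is a conditional entropy that only measures the complexity of $\cP$ transverse to $\xi^u$; because forward iteration collapses pieces of $\cW^s_{\loc}$ into single $\cP$-atoms, this transverse contribution is controlled by the growth transverse to $\cF^{uu}$, that is, along $E^{cs}$, which is exponentially contracting. By Ruelle's inequality~\cite{Rue78} — which is valid for $C^1$ maps — the corresponding (fibered) entropy vanishes, whence $h_\mu(f,\cP\vee\xi^u)=H_\mu(f^{-1}\xi^u\mid\xi^u)=h_\mu(f,\cF^{uu})$. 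Taking the supremum over $\cP$ then gives $h_\mu(f)\le h_\mu(f,\cF^{uu})$.

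The hard part will be the last step: making precise, in the $C^1$ setting, that the entropy transverse to $\cF^{uu}$ is zero. This requires the local product structure built from the Pesin stable manifolds — whose sizes are only measurable functions of the base point, although their contraction rate is uniform — together with a fibered version of Ruelle's inequality over the partition into strong-unstable plaques; the dominated splitting, through the fake center-stable foliations and the uniform hyperbolicity estimates along $E^{cs}$ and $E^{uu}$, is exactly what substitutes for the H\"older control of distortion used in Ledrappier's proof. One also needs $\xi^u$ with the stated increasing, finite-entropy and generating properties, which is routine given~\cite{Yan16} and~\cite{LeY85a}.
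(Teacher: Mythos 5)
Your reduction of the proposition to showing that the entropy transverse to $\cF^{uu}$ vanishes, and your identification of the key geometric input --- Pesin local stable manifolds of dimension $\dim E^{cs}$, available in the $C^1$ setting thanks to the dominated splitting (Lemma~\ref{l.stablemanifol}), together with the resulting local product structure --- match the paper's strategy. The gap is that the decisive step is not carried out, and the tool you propose for it would not work as stated. Ruelle's inequality is a statement about a differentiable map acting on a manifold; a ``fibered version over the partition into strong-unstable plaques'' would have to bound the entropy of the quotient dynamics on $M/\xi^u$, which is not a smooth system: the quotient is merely a Lebesgue space, and the transverse ``derivative'' along $E^{cs}$ is only defined through measurable holonomies. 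There is no off-the-shelf Ruelle inequality in that setting, and making one rigorous is at least as hard as the proposition itself. So, as written, your argument stops exactly where the proof has to begin.

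The paper's route is more elementary and avoids any quotient dynamics. Take a finite partition $\cA$ of small diameter and small boundary measure, and let $\cA^u$ be its refinement by local strong-unstable plaques. The whole content is the mod~$0$ identity $\bigvee_{i\ge 0}f^i(\cA)=\bigvee_{i\ge 0}f^i(\cA^u)$ (Lemma~\ref{l.partitio_entropy}): if $x$ and $y$ have the same backward $\cA$-itinerary but $y\notin\cA^u(x)$, the bracket point $z=[x,y]\ne x$ satisfies $f^{-i}(z)=[f^{-i}(x),f^{-i}(y)]$ for all $i\ge 0$, and at the infinitely many backward returns of $x$ to a positive-measure set where the stable manifolds have uniform size, $f^{-i}(z)$ lies in $W^s_r(f^{-i}(x))$; pushing forward and using the uniform contraction rate forces $z=x$, a contradiction. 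Combined with the fact that $\cA$ is generating (part (2) of that lemma) and that $\xi=\bigvee_{i\ge 0}f^i(\cA^u)$ is $\mu$-subordinate, this converts $h_\mu(f)=h_\mu(f,\cA)=H_\mu\bigl(\cA\mid f(\xi)\bigr)$ into $H_\mu\bigl(\xi\mid f(\xi)\bigr)=h_\mu(f,\cF^{uu})$ by purely formal conditional-entropy manipulations. I recommend replacing the appeal to a fibered Ruelle inequality by this partition identity; it uses exactly the ingredients you already listed --- measurable stable manifolds with uniform contraction rate, Poincar\'e recurrence, and local product structure --- and nothing more.
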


Recall (Rokhlin~\cite{Rok67a}, see also \cite[Lemma~9.1.12]{FET16}) that the entropy of
$(f,\mu)$ relative to a finite partition $\cA$ can be defined as
$$
h_\mu(f,\cA)
= H_\mu\left(\cA\mid \bigvee_{i=1}^\infty f^i(\cA)\right)
= \lim_n H_\mu\left(\cA\mid \bigvee_{i=1}^n f^i(\cA)\right).
$$
We denote by $\cA^u$ the partition of $M$ whose elements are the intersections of
the elements of $\cA$ with local strong-unstable leaves.

\begin{lemma}\label{l.partitio_entropy}
Under the hypotheses of Proposition~\ref{p.uentropyandentropy}, if $\cA$ is a finite
partition with sufficiently small diameter then $h_\mu(f)=h_\mu(f,\cA)$ and,
up to zero $\mu$-measure,
\begin{enumerate}
\item $\bigvee_{i=0}^\infty f^i(\cA) = \bigvee_{i=0}^\infty f^i(\cA^u)$ and
\item $\bigvee_{i=-\infty}^\infty f^{i}(\cA)$ is the partition into points.
\end{enumerate}
\end{lemma}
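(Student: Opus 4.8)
The plan is to deduce all three assertions from one geometric fact. Under the hypotheses of Proposition~\ref{p.uentropyandentropy} the measure $\mu$ is hyperbolic, with Oseledets splitting given by the partially hyperbolic splitting $E^{cs}\oplus E^{uu}$ (all exponents along $E^{uu}$ are at least the uniform expansion rate $\lambda_{uu}>0$, and all center-stable exponents are negative, so there are no zero exponents). Hence Lemma~\ref{l.stablemanifol} applies and provides, for $\mu$-a.e.\ $x$, a measurable local stable manifold $\cW^s_{loc}(x)$ inside the leaf $\cF^{cs}_{loc}(x)$, with $d^{cs}_{f^n(x)}(f^n(y),f^n(z))\le C(x)e^{n\lambda}d^{cs}_x(y,z)$ for $y,z\in\cW^s_{loc}(x)$, $n\ge0$, some $\lambda<0$; here $\cF^{cs}_{loc}$ and $\cF^{uu}_{loc}$ are genuine local foliations (unique integrability of $E^{uu}$ and, via dynamical coherence, of $E^{cs}$), which are transverse and continuous, so they have a uniform local product structure: there is $\delta_0>0$ with $[x,y]:=\cF^{uu}_{loc}(x)\cap\cF^{cs}_{loc}(y)$ a single point, depending continuously on $(x,y)$, whenever $d(x,y)<\delta_0$. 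I would fix small $\rho>0$ and large $C_0\ge1$ so that the good set $G=\{x:\cF^{cs}_\rho(x)\subset\cW^s_{loc}(x)\text{ and }C(x)\le C_0\}$ has positive $\mu$-measure; by the ergodic theorem, the backward orbit of $\mu$-a.e.\ $x$ meets $G$ along a sequence of times $n_k\to\infty$. All three assertions will hold once $\diam\cA=\delta$ is small compared with $\delta_0$, $\rho$, the uniform plaque size $r_0$, the rate $\lambda_{uu}$, and the constants comparing ambient and leaf distances at small scales.

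The \emph{key fact} is: for $\mu$-a.e.\ $x$, if $d(f^{-i}y,f^{-i}x)\le\delta$ for all $i\ge0$, then $y\in\cF^{uu}_{loc}(x)$. To prove it I would take $x$ whose backward orbit meets $G$ infinitely often and such a $y$, and (since $d(x,y)\le\delta<\delta_0$) write $y\in\cF^{uu}_{loc}(z)$ with $z=\cF^{cs}_{loc}(x)\cap\cF^{uu}_{loc}(y)\in\cF^{cs}_{loc}(x)$. Uniform contraction of $\cF^{uu}$ under backward iteration gives $d(f^{-i}y,f^{-i}z)\to0$, hence $d(f^{-i}z,f^{-i}x)$ stays $O(\delta)$ for all $i\ge0$; since $f^{-i}z$ and $f^{-i}x$ lie on a common $\cF^{cs}$-leaf, the leaf distance satisfies $d^{cs}_{f^{-i}x}(f^{-i}z,f^{-i}x)\le C'\delta$ for all $i$. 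Choosing $\delta<\rho/C'$, at each return time $n_k$ one has $f^{-n_k}z\in\cF^{cs}_\rho(f^{-n_k}x)\subset\cW^s_{loc}(f^{-n_k}x)$, so the contraction estimate of Lemma~\ref{l.stablemanifol}, applied forward over $n_k$ steps and using $C(f^{-n_k}x)\le C_0$, yields $d^{cs}_x(z,x)\le C_0e^{n_k\lambda}C'\delta\to0$ as $k\to\infty$; therefore $z=x$ and $y\in\cF^{uu}_{loc}(x)$.

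Granting the key fact, the assertions follow quickly. For (2): $\cA^u$ refines $\cA$, so $\bigvee_{i\ge0}f^i\cA^u$ refines $\bigvee_{i\ge0}f^i\cA$; conversely, for $\mu$-a.e.\ $x$ the atom of $\bigvee_{i\ge0}f^i\cA$ through $x$ consists of points $y$ with $d(f^{-i}y,f^{-i}x)\le\delta$ for all $i\ge0$, which by the key fact lie in $\cF^{uu}_{loc}(x)$; then $f^{-i}y\in\cF^{uu}_{loc}(f^{-i}x)$ for every $i\ge0$ (backward iteration keeps local unstable plaques inside local unstable plaques), and $f^{-i}y$ is in the same $\cA$-atom as $f^{-i}x$, so $f^{-i}y\in\cA^u(f^{-i}x)$, i.e.\ $y$ is in the atom of $\bigvee_{i\ge0}f^i\cA^u$ through $x$; hence the two partitions agree mod $\mu$. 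For (3): an atom of $\bigvee_{i\in\ZZ}f^i\cA$ through $x$ is $\{y:d(f^ny,f^nx)\le\delta\ \forall n\in\ZZ\}$; the backward half of this condition puts $y\in\cF^{uu}_{loc}(x)$ by the key fact, and then the forward half forces $y=x$, since a path joining $y$ to $x$ inside $\cF^{uu}(x)$ is stretched by $f^n$ at rate at least $e^{n\lambda_{uu}}$ while $f^ny,f^nx$ remain $\delta$-close in $M$ and on a common $\cF^{uu}$-plaque, hence $O(\delta)$-close along the leaf (a routine bootstrap keeps them in a common plaque); letting $n\to\infty$ gives $d^{uu}(y,x)=0$. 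Thus $\bigvee_{i\in\ZZ}f^i\cA$ is the partition into points mod $\mu$, i.e.\ $\cA$ is a two-sided generator, and (1), $h_\mu(f)=h_\mu(f,\cA)$, is then the Kolmogorov--Sinai generator theorem.

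The step I expect to be the main obstacle is the key fact, specifically the stable-manifold estimate in the $C^1$ category, where the classical H\"older/$C^{1+}$ Pesin theory is unavailable. The difficulty is that one cannot bound the contraction constant $C(\cdot)$ of Lemma~\ref{l.stablemanifol} uniformly along a whole backward orbit; the device that makes the argument go through is to invoke the contraction only at the returns $n_k$ of the backward orbit to the positive-measure set $G$, where $C\le C_0$ by construction. The subsidiary geometric inputs — the local product structure of the transverse continuous foliations $\cF^{uu}$ and $\cF^{cs}$, the comparability of ambient and leaf distances at small scales, and the uniform contraction/expansion along $\cF^{uu}$ — are standard for partially hyperbolic dynamically coherent diffeomorphisms and need only routine verification.
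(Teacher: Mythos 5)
Your proof is correct and follows essentially the same route as the paper: the same key fact (points whose backward orbits stay in the same small atoms lie on a common local strong-unstable plaque), established by bracketing with center-stable plaques, recurrence of the backward orbit to a positive-measure set on which the Pesin stable manifolds of Lemma~\ref{l.stablemanifol} have uniform size and uniform constant, and forward contraction applied at those return times, after which (1), (2) and the generator property follow as you indicate. The only discrepancy is that you invoke dynamical coherence to integrate $E^{cs}$, whereas Proposition~\ref{p.uentropyandentropy} does not assume it and the paper works instead with the locally invariant \emph{fake} center-stable plaques of \eqref{eq.localinvariant}; your argument goes through verbatim with that substitution.
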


\begin{proof}
As before, let $\cF^{uu}$ denote the strong-unstable foliation of $f$.
Moreover, let $\cF^{cs}_{loc}$ be a fake center-stable foliation,
that is, a locally invariant continuous family of local sub-manifolds
tangent to the center-stable sub-bundle $E^{cs}$.
Recall \eqref{eq.localinvariant} in the previous section.

Just as in the proof of Lemma~\ref{l.stablemanifol}, we can find $r>0$,
$\lambda<0$ and a positive $\mu$-measure set $\Lambda_0$ of points
whose stable manifolds have uniform size $r>0$, and uniform contraction
rate $e^{\lambda}$ under forward iteration by $f$.

Let $x$ and $y$ be any two nearby points in $M$, there is a unique
point of intersection the local strong-unstable leaf $\cF^{uu}_{loc}(y)$
and the local centre-stable leaf $\cF^{cs}_{r}(x)$, which we denote
as $z=[x,y]$. By local invariance, assuming $x$ and $y$ are sufficiently close to each other we also have that
\begin{equation}\label{eq.bracket_invariance}
f\left([x,y]\right) = \left[f(x),f(y)\right]
\text{ and }
f^{-1}\left([x,y]\right) = \left[f^{-1}(x),f^{-1}(y)\right]
\end{equation}
We assume that the diameter of the partition $\cA$ is small enough that
this will happen whenever $x$ and $y$ are in the same element of $\cA$.

By further reducing the diameter of $\cA$ if necessary, we may assume
that any two points in distinct elements of $\cA^u$ inside the same
element of $\cA$ cannot be mapped into the same element of $\cA^u$ by
either $f$ or its inverse.
More precisely, if $x$ and $y$ are such that $y\in \cA(x)$ and
$f(y)\in \cA(f(x))$. Then $y \in \cA^u(x)$ if and only if
$f(y) \in \cA^u(f(x))$ and similarly for the inverse $f^{-1}$.

Let us prove part (1) of the lemma.
It is clear that $\bigvee_{i=0}^\infty f^i(\cA)$ is coarser than $\bigvee_{i=0}^\infty f^i(\cA^u)$.
To prove the converse, we only need to show that
\begin{equation}\label{eq.pushforwardpartition}
\cA^u \prec \bigvee_{i\geq 0} f^i(\cA)
\end{equation}
up to a zero $\mu$-measure.
Indeed, \eqref{eq.pushforwardpartition} implies that $f^n(\cA^u)$ is
coarser than $\bigvee_{i=n}^\infty f^i(\cA)$ for any $n \ge 1$,
and so
$$
\bigvee_{n=0}^\infty f^n(\cA^u)
\prec \bigvee_{n=0}^\infty\left(\bigvee_{i=n}^\infty f^i(\cA)\right)
= \bigvee_{i=0}^\infty f^i(\cA).
$$

To prove \eqref{eq.pushforwardpartition}, suppose, by contradiction,
that there exists a positive $\mu$-measure set of points $x$,
and for each of them there exists
$$
y\in \left(\bigvee_{i\geq 0} f^i(\cA)\right) (x) \setminus \cA^u(x).
$$
Then the point $z=[x,y]$ is distinct from $x$ and, moreover, each
backward iterate $f^{-i}(y)$ is in the same element of $\cA$ as
the corresponding $f^{-i}(x)$.
Thus, we may us \eqref{eq.bracket_invariance} to conclude that
$$
f^{-i}(z)=[f^{-i}(x),f^{-i}(y)] \text{ for any $i\geq 0$.}
$$

By Poincar\'e recurrence, there is a sequence of times
$0<i_1<\cdots < i_j < \cdots <\infty$ such that
$f^{-i_j}(x)\in \Lambda_0$ for every $j$.
By further assuming that the diameter of $\cA$ is small,
it follows that
$$
f^{-i_j}(z)
\in \cF^{cs}_{r}(f^{-i_j}(x))
= W^s_{r}(f^{-i_j}(x)).
$$
Since the local stable manifolds have a uniform contraction rate,
we get that
$$
d^{cs}_x(x,z) \leq C_0 e^{i_j \lambda} \text{ for every } j.
$$
Making $j\to \infty$ we conclude that $x=z$, which is a contradiction.
This contradiction proves the claim in \eqref{eq.pushforwardpartition},
which completes the proof of part (1) of the lemma.

Finally, observe that \eqref{eq.pushforwardpartition} also yields
$$
\left(\bigvee_{-\infty}^\infty f^{i}(\cA)\right)(x)
\subset \left(\bigvee_{i\leq 0} f^{i}(\cA)\right)(x)\cap \cA^u(x).
$$
Since the strong-unstable foliation $\cF^{uu}$ is uniformly expanding,
the expression on the right hand side reduces to $\{x\}$, as long as
the diameter of $\cA$ is small enough. This proves part (2) of the lemma.
\end{proof}

\begin{proof}[Proof of Proposition~\ref{p.uentropyandentropy}]
Choose a finite partition $\cA$ with diameter small enough so that
Lemma~\ref{l.partitio_entropy} applies. By Yang~\cite[Proposition 3.1]{Yan16},
we may always choose $\cA$ in such a way that the boundary has small $\mu$-measure,
in the sense that there is $\lambda<1$ such that the $\mu$-measures of the
$\lambda^n$-neighborhoods of the boundary form a summable series.
Then, by Yang~\cite[Lemma 3.2]{Yan16}, the partition
$$
\xi = \bigvee_{i=0}^\infty f^i(\cA) = \bigvee_{i=0}^\infty f^i(\cA^u)
$$
in part (1) of Lemma~\ref{l.partitio_entropy} is $\mu$-subordinate.
Thus the lemma gives that
$$
\begin{aligned}
h_\mu(f)
& = h_\mu(f,\cA)
= H_\mu\left(\cA\mid \bigvee_{i=1}^\infty f^i(\cA)\right)
= H_\mu\left(\cA\mid \bigvee_{i=1}^\infty f^i(\cA^u)\right)\\
& = H_\mu\left(\cA \mid f(\xi)\right)
= H_\mu\left(\cA\vee f(\xi) \mid f(\xi)\right).
\end{aligned}
$$
Thus, we only need to show that $\cA \vee f(\xi)=\xi$ up to $\mu$-measure zero.

On the one hand, since $\cA$ is coarser than $\cA^u$,
it is clear that $\cA \vee f(\xi)$ is coarser than
$$
\cA^u \vee f(\xi)
= \cA^u \vee \left(\bigvee_{i=1}^\infty f^i(\cA^u)\right)=\xi.
$$
On the other hand, since $\xi$ is $\mu$-subordinate and has small diameter,
every element of $f(\xi)$ is contained in a local strong-unstable leaf.
Thus every element of $\cA \vee f(\xi)$ is contained in the intersection of
an element of $\cA$ with a local strong-unstable leaf.
In other words, $\cA \vee f(\xi)$ is finer than $\cA^u$, which implies that
$$
\cA\vee f(\xi)=(\cA \vee f(\xi)) \vee f(\xi)
$$
is finer than $\cA^u \vee f(\xi)=\xi$.
This proves that $\cA \vee f(\xi)=\xi$ up to $\mu$-measure zero, as we claimed.
\end{proof}

\bibliographystyle{alpha}
\bibliography{bib}

\end{document}